\newcommand{\Mp}{\mathfrak{M}}
\newcommand{\BrAD}{\mathfrak{B}_{\mathrm{add}}}
\newcommand{\BrMU}{\mathfrak{B}_{\mathrm{mul}}}
\newcommand{\Bc}{\mathfrak{C}}
\newcommand{\Cs}{\mathfrak{S}}
\newcommand{\N}{\mathbb{N}}
\newcommand{\Z}{\mathbb{Z}}
\newcommand{\C}{\mathbb{C}}
\newcommand{\D}{\mathbb{D}}
\newcommand{\Sym}{\mathbb{S}}
\newcommand{\Alt}{\mathbb{A}}
\newcommand{\Aut}{\operatorname{Aut}}
\newcommand{\Hom}{\operatorname{Hom}}
\newcommand{\Hol}{\operatorname{Hol}}
\newcommand{\PSL}{\mathrm{PSL}}
\newcommand{\GL}{\mathrm{GL}}
\newcommand{\id}{\mathrm{id}}
\newcommand{\Soc}{\mathrm{Soc}}
\newcommand{\Add}[1]{(#1,\cdot)}
\newcommand{\Mul}[1]{(#1,\circ)}
\numberwithin{equation}{section}
\numberwithin{figure}{section}
\numberwithin{table}{section}
\theoremstyle{plain}
\newtheorem{thm}{Theorem}[section]
\theoremstyle{plain}
\newtheorem{lem}[thm]{Lemma}
\newtheorem{cor}[thm]{Corollary}
\theoremstyle{plain}
\newtheorem{pro}[thm]{Proposition}
\newtheorem{defn}[thm]{Definition}
\newtheorem*{conjecture*}{Conjecture}
\newtheorem{problem}[thm]{Problem}
\newtheorem{notation}[thm]{Notation}
\newtheorem{example}[thm]{Example}
\newtheorem{question}[thm]{Question}
\theoremstyle{remark}
\newtheorem{rem}[thm]{Remark}
\theoremstyle{plain}
\newtheorem{exa}[thm]{Example}
\begin{document}

\title[On skew braces]{On skew braces (with an appendix by N. Byott and L. Vendramin)}

\begin{abstract}
	Braces are generalizations of radical rings, introduced by Rump to study
	involutive non-degenerate set-theoretical solutions of the Yang--Baxter equation (YBE).
	Skew braces were also recently introduced as a tool to study not
	necessarily involutive solutions.  Roughly speaking, skew braces provide
	group-theoretical and ring-theoretical methods to understand solutions of
	the YBE.  It turns out that skew braces appear in many different contexts,
	such as near-rings, matched pairs of groups, triply factorized groups,
	bijective $1$-cocycles and Hopf--Galois extensions. These connections and
	some of their consequences are explored in this paper. We produce several
	new families of solutions related in many different ways with rings,
	near-rings and groups.  We also study the solutions of the YBE that skew
	braces naturally produce.  We prove, for example, that the order of the
	canonical solution associated with a finite skew brace is even: it is two
	times the exponent of the additive group modulo its center. 
\end{abstract}

\keywords{Braces, Yang--Baxter, Rings, Near-rings, Triply factorized groups,
Matched pair of groups, Bijective $1$-cocycles, Hopf--Galois extensions .}

\author{A. Smoktunowicz}
\author{L. Vendramin}

\address{School of Mathematics, The University of Edinburgh, James Clerk Maxwell Building, The Kings Buildings, Mayfield Road EH9 3JZ, Edinburgh}
\email{A.Smoktunowicz@ed.ac.uk}

\address{IMAS--CONICET and Departamento de Matem\'atica, FCEN, Universidad de Buenos Aires, Pabell\'on~1, Ciudad Universitaria, C1428EGA, Buenos Aires, Argentina}
\email{lvendramin@dm.uba.ar}

\address{Department of Mathematics, College of Engineering,
Mathematics and Physical Sciences, University of Exeter, Exeter 
EX4 4QF U.K.}  
\email{N.P.Byott@exeter.ac.uk}

\maketitle
\setcounter{tocdepth}{1}
\tableofcontents

\section*{Introduction}

In~\cite{MR1183474} Drinfeld posed the problem of studying set-theoretical
solutions of the Yang--Baxter equation. Such solutions are pairs $(X,r)$, where
$X$ is a set and 
\[
r\colon
X\times X\to X\times X,
\quad
r(x,y)=(\sigma_x(y),\tau_y(x))
\]
is a bijective map such that
\[
	(r\times\id)(\id\times r)(r\times\id)=(\id\times r)(r\times\id)(\id\times r).
\]

The first two papers addressing this combinatorial problem were those of
Etingof, Schedler, Soloviev~\cite{MR1722951} and Gateva-Ivanova and Van den
Bergh~\cite{MR1637256}. Both papers considered involutive and non-degenerate
solutions. A solution is said to be \emph{involutive} if $r^2=\id_{X\times X}$
and it is said to be \emph{non-degenerate} if all the maps
$\sigma_x,\tau_x\colon X\to X$ are bijective. 

In~\cite{MR1722951}, Etingof, Schedler and Soloviev introduced the
\emph{structure group} $G(X,r)$  of a solution $(X,r)$ as the group with
generators in $\{e_x:x\in X\}$ and relations
$e_xe_y=e_{\sigma_x(y)}e_{\tau_y(x)}$, $x,y\in X$. They proved that $G(X,r)$
acts on $X$ and there is a bijective $1$-cocycle $G(X,r)\to\Z^{(X)}$ where
$\Z^{(X)}$ is the free abelian group on $X$.  Bijective $1$-cocycles are a
powerful tool for studying involutive set-theoretical solutions of the
Yang--Baxter equation; see for example~\cite{MR1722951,MR1848966}. 

Involutive solutions have been intensively studied; see for
example~\cite{MR2652212,MR3374524,MR2095675,MR2927367,MR2885602,MR2368074,MR2383056}.
In~\cite{MR2278047}, Rump introduced braces, a new algebraic structure that
turns out to be equivalent to bijective $1$-cocycles;
see~\cite{MR2584610,GI15,MR3291816}.  According to the definition given by
Ced\'o, Jespers and Okni\'nski in~\cite{MR3177933}, a brace is a triple
$(A,\cdot,+)$, where $(A,\cdot)$ is a group, $(A,+)$ is an abelian group and 
\[
a(b+c)+a=ab+ac
\]
holds for all $a,b,c\in A$. In this paper these braces will be called
\emph{classical} braces. It was observed by Rump that radical rings form an
important family of examples of braces.  This observation suggests using
ring-theoretical methods to study involutive set-theoretical solutions. Rump
also observed that a classical brace $A$ produces an involutive non-degenerate
solution:
\[
	r_A\colon A\times A\to A\times A,
	\quad
	r_A(a,b)=\left( ab-a, (ab-a)^{-1}ab \right).
\]
Moreover, the structure group $G(X,r)$ admits a canonical brace structure. This
brace structure over $G(X,r)$ is extremely important for understanding the
structure of involutive set-theoretical solutions.

The study of non-involutive solutions of the Yang--Baxter equation is also an
interesting problem with several applications in algebra and topology. Lu, Yan
and Zhu~\cite{MR1769723} and Soloviev~\cite{MR1809284} extended the main
results of~\cite{MR1722951} to non-involutive solutions.  As in the involutive
setting, one defines the structure group $G(X,r)$ and proves that there is a
bijective $1$-cocycle with domain $G(X,r)$ (now with values in a group which is
in general not isomorphic to a free abelian group).  These results suggest a
generalization of classical braces known as \emph{skew braces};
see~\cite{MR3647970}. 

Skew braces produce non-degenerate set-theoretical solutions; see
Theorem~\ref{thm:YB}. Moreover, the results of~\cite{MR1769723,MR1809284} can
now be translated into the language of skew braces. In particular, one obtains
that $G(X,r)$ admits a canonical skew brace structure and its associated
solution $r_{G(X,r)}$ satisfies a universal property; see
Theorem~\ref{thm:G(X,r)}.  

It is remarkable that skew braces have connections with other algebraic
structures such as groups with exact factorizations, Zappa--Sz\'ep products,
triply factorized groups, rings and near-rings, regular subgroups, Hopf--Galois
extensions.  As skew braces produce non-degenerate solutions, these connections
yield several new families of examples of solutions of the Yang--Baxter
equation associated with rings, near-rings and groups.

\medskip
This paper is organized as follows. In Section~\ref{preliminaries} we review the
definition and some basic properties of skew braces and some elementary
examples are given. In Section~\ref{examples} connections to other topics in
algebra are explored. We prove in Theorem~\ref{thm:factorization} that
factorizable groups are skew braces. As a corollary we prove that
Zappa-Sz\'ep product of groups and semidirect products of groups are skew
braces. Theorem~\ref{thm:factorization} is also used to construct skew braces
from Jacobson radical rings. In Theorem~\ref{thm:brace2T} we prove that
skew braces provide examples of triply factorized groups. In
Theorem~\ref{thm:triple} we translate a result of Sysak for triply factorized
groups into the language of skew braces. Based on this theorem, one easily finds
a connection between near-rings and skew braces; see
Proposition~\ref{pro:nr2brace}.  Several general constructions
of skew braces are stated, for example semidirect products, Zappa--Sz\'ep
products and wreath products of skew braces.  
The first two sections contain several new examples of skew braces. We summarize these examples in the 
following table:

\begin{table*}[h]
\begin{tabular}{c|c|c}
Additive group & Multiplicative group & Reference\tabularnewline
\hline
$\Sym_{3}$ & $C_{6}$ & Example~\ref{exa:s3c6}\tabularnewline
dihedral group & quaternion group & Example~\ref{exa:d8q8}\tabularnewline
$\Alt_{4}$ & $C_{3}\rtimes C_{4}$ & Example~\ref{exa:simple}\tabularnewline
$\GL(n,C)$ & $U(n)\times T(n)$ & Example~\ref{exa:QR}\tabularnewline
$\Alt_{5}$ & $\Alt_{4}\times C_{5}$ & Example~\ref{exa:a5a4c5}\tabularnewline
$\PSL(2,7)$ & $\Sym_{4}\times C_{7}$ & Example~\ref{exa:PSL27S4C7}\tabularnewline
\end{tabular}
\end{table*}

In Section~\ref{YB} the canonical
non-degenerate solution associated to a skew brace (constructed in
Theorem~\ref{thm:YB}) is studied.  We prove in Corollary~\ref{cor:biquandles}
that the solutions associated with skew braces are biquandles; hence skew
braces could be used to construct combinatorial invariants of knots.  In
Theorem~\ref{thm:depth} it is proved that the solution associated to a finite
skew brace is always a permutation of even order; and the order of this
permutation is computed explicitly in terms of the exponent of
a certain quotient the additive group of the skew brace.  In
Section~\ref{ideals} ideals of skew braces simple skew braces and skew braces
of finite multipermutation level are introduced.  Finally, in
Section~\ref{others} it is proved that skew braces are related to other
algebraic structures such as cycle sets (Theorem~\ref{thm:ccs}) and matched
pairs of groups (Theorem~\ref{thm:matched}). 

\subsection*{Notations and conventions} 

If $X$ is a set, we write $|X|$ to denote the cardinality of $X$ and $\Sym_X$
to denote the group of bijective maps $X\to X$. For $n\in\N$ the symmetric
group in $n$ letters will be denoted by $\Sym_n$, the alternating group in $n$ letters
by $\Alt_n$ and the cyclic group of order
$n$ by $C_n$. Usually we simply write $ab$ to denote the product $a\cdot b$. 

\section{Preliminaries}
\label{preliminaries}

Skew braces were first defined in ~\cite{MR3647970}. In this section we recall
the basic notions and properties of skew braces.

\begin{defn}
A \emph{skew brace} is a triple $(A,\cdot,\circ)$, where $(A,\cdot)$ and
$(A,\circ)$ are groups and the compatibility condition 
\begin{align}
    \label{eq:compatibility}
    &a\circ (bc)=(a\circ b)a^{-1}(a\circ c)
\end{align}
holds for all $a,b,c\in A$, where $a^{-1}$ denotes the inverse of $a$ with respect
to the group $(A,\cdot)$. 
The group $(A,\cdot)$ will be the
\emph{additive group} of the brace and $(A,\circ)$ will be the \emph{multiplicative
group} of the brace. A skew brace is said to be \emph{classical} if its additive
group is abelian.
\end{defn}

\begin{defn}
  Let $A$ and $B$ be skew braces. A map $f\colon A\to B$ is said to be a
  \emph{brace homomorphism} if $f(aa')=f(a)f(a')$ and $f(a\circ a')=f(a)\circ
  f(a')$ for all $a,a'\in A$. 
\end{defn}

\begin{rem}
 Skew braces form a category.  
\end{rem}

\begin{rem}
  \label{rem:0=1}
  It follows from~\eqref{eq:compatibility} that in every brace $A$ the neutral elements
  of $\Add{A}$ and $\Mul{A}$ concide.
\end{rem}

\begin{exa}
  \label{exa:trivial}
  Let $A$ be a group. Then $a\circ b=ab$ gives a skew brace. Similarly, the
  operation $a\circ b=ba$ turns $A$ into a skew brace. 
\end{exa}

\begin{exa}
	\label{exa:sd}
	Let $A$ and $M$ be groups and let $\alpha\colon A\to\Aut(M)$ be a
	group homomorphism. Then $M\times A$ with 
	\[
	(x,a)(y,b)=(xy,ab),
	\quad
	(x,a)\circ (y,b)=(x\alpha_a(y),ab)
	\]
	is a skew brace. Similarly, $M\times A$ with
	\[
	(x,a)(y,b)=(x\alpha_a(y),ab),\quad
	(x,a)\circ (y,b)=(xy,ba)
	\]
	is a skew brace. 
\end{exa}

\begin{exa}
	\label{exa:times}
	Let $A$ and $B$ be skew braces. Then $A\times B$ with 
	\[
		(a,b)(a',b')=(aa',bb'),\quad
		(a,b)\circ (a',b')=(a\circ a',b\circ b'),
	\]
	is a skew brace. 
\end{exa}

\begin{lem}{\cite[Corollary 1.10]{MR3647970}}
	\label{lem:lambda}
	Let $A$ be a skew brace. The map 
	\[
	\lambda\colon\Mul{A}\to\Aut\Add{A},\quad
	\lambda_a(b)=a^{-1}(a\circ b),
	\]
	is a group homomorphism.
\end{lem}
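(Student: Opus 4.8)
The plan is to verify two things: first, that each $\lambda_a$ is an automorphism of $\Add{A}$, and second, that $a\mapsto\lambda_a$ is a homomorphism from $\Mul{A}$ to $\Aut\Add{A}$. I would start with the homomorphism property of each individual $\lambda_a$, i.e.\ that $\lambda_a(bc)=\lambda_a(b)\lambda_a(c)$. Expanding the definition, $\lambda_a(bc)=a^{-1}(a\circ(bc))$, and by the compatibility condition~\eqref{eq:compatibility} this equals $a^{-1}(a\circ b)a^{-1}(a\circ c)=\lambda_a(b)\lambda_a(c)$, which is immediate. So the content of ``$\lambda_a$ is an endomorphism of $\Add{A}$'' is essentially a restatement of the brace axiom.

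Next I would establish that $a\mapsto\lambda_a$ is multiplicative, i.e.\ $\lambda_{a\circ b}=\lambda_a\lambda_b$ as maps $A\to A$. This is the main computation. For any $c\in A$ we have $\lambda_{a\circ b}(c)=(a\circ b)^{-1}\bigl((a\circ b)\circ c\bigr)=(a\circ b)^{-1}\bigl(a\circ(b\circ c)\bigr)$, using associativity of $\circ$. On the other hand, $\lambda_a\bigl(\lambda_b(c)\bigr)=a^{-1}\bigl(a\circ(b^{-1}(b\circ c))\bigr)$. Using the already-proven homomorphism property of $\lambda_a$, $a^{-1}\bigl(a\circ(b^{-1}(b\circ c))\bigr)=\lambda_a(b^{-1})\,\lambda_a(b\circ c)=\bigl(a^{-1}(a\circ b^{-1})\bigr)\bigl(a^{-1}(a\circ(b\circ c))\bigr)$. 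So I need $(a\circ b)^{-1}\bigl(a\circ(b\circ c)\bigr)=a^{-1}(a\circ b^{-1})\,a^{-1}\bigl(a\circ(b\circ c)\bigr)$, which reduces to the identity $(a\circ b)^{-1}=a^{-1}(a\circ b^{-1})a^{-1}$, equivalently $a\circ b^{-1}=a\,(a\circ b)^{-1}\,a$. To get this I would set $c=b^{-1}$ in the compatibility condition written for the product $b\cdot b^{-1}$ at the neutral element: since the neutral elements of $\Add{A}$ and $\Mul{A}$ coincide (Remark~\ref{rem:0=1}), $a\circ(b b^{-1})=a\circ 1=a$, while $(a\circ b)a^{-1}(a\circ b^{-1})=a$ by~\eqref{eq:compatibility}, so $a\circ b^{-1}=a\,(a\circ b)^{-1}\,a$, exactly what is needed.

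Once multiplicativity of $a\mapsto\lambda_a$ is in hand, the fact that each $\lambda_a$ is bijective (hence an automorphism, completing the proof that the map lands in $\Aut\Add{A}$) follows formally: $\lambda_1=\id$ because $1\circ b=b$ gives $\lambda_1(b)=1^{-1}(1\circ b)=b$, so $\lambda_a\lambda_{a^{-1}}=\lambda_{a\circ a^{-1}}=\lambda_1=\id$ and likewise on the other side, where $a^{-1}$ here denotes the $\circ$-inverse; thus $\lambda_a$ is invertible with inverse $\lambda_{a^{-1}}$. (Alternatively one checks injectivity directly and uses finiteness, but the inverse-from-the-group-structure argument is cleaner and needs no finiteness hypothesis.) Assembling these pieces gives the statement. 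I expect the only mildly delicate point to be bookkeeping the two different inverses (with respect to $\cdot$ and with respect to $\circ$) and the small lemma $a\circ b^{-1}=a(a\circ b)^{-1}a$; everything else is a direct unwinding of definitions and the compatibility axiom.
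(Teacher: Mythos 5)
Your proof is correct. The paper itself gives no argument for this lemma --- it simply cites \cite[Corollary 1.10]{MR3647970} --- so your write-up supplies the content the paper omits, and it does so along the standard lines. All three steps check out: the identity $\lambda_a(bc)=\lambda_a(b)\lambda_a(c)$ is indeed a direct restatement of \eqref{eq:compatibility}; the reduction of $\lambda_{a\circ b}=\lambda_a\lambda_b$ to the auxiliary identity $a\circ b^{-1}=a(a\circ b)^{-1}a$ is right, and your derivation of that identity from $a\circ(bb^{-1})=a\circ 1=a$ (using Remark~\ref{rem:0=1}) together with \eqref{eq:compatibility} is exactly the clean way to get it; and deducing bijectivity of each $\lambda_a$ from $\lambda_1=\id$ and the already-established multiplicativity of $a\mapsto\lambda_a$ avoids any finiteness assumption, as you note. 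The one point worth flagging is purely notational: in the last step you write $\lambda_{a^{-1}}$ for the inverse with respect to $\circ$, whereas the paper reserves $a^{-1}$ for the $\cdot$-inverse and writes $\overline{a}$ for the $\circ$-inverse; since you explicitly say which inverse you mean, there is no gap, but in the paper's notation the conclusion reads $\lambda_a^{-1}=\lambda_{\overline{a}}$.
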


\begin{rem}
	If $A$ is a skew brace and $a\in A$, the inverse of $a$ with respect to
	$\circ$ is the element $\overline{a}=\lambda^{-1}_a(a^{-1})$. 
\end{rem}

Lemma~\ref{lem:lambda} justifies the following definition:

\begin{defn}
	\label{def:structure}
	Let $A$ be a skew brace. The \emph{crossed group} of $A$ is defined as
	the group $\Gamma(A)=(A,\cdot)\rtimes(A,\circ)$ with multiplication 
    \[
    (a,x)(b,y)=(a\lambda_x(b),x\circ y).
    \]
\end{defn}

\begin{lem}{\cite[Lemma 2.4]{Bachiller3}}
	\label{lem:mu}
	Let $A$ be a skew brace and let 
	\[
	\mu\colon\Mul{A}\to\Sym_A,\quad
	\mu_b(a)=\overline{\lambda_a(b)}\circ a\circ b. 
	\]
	Then 
	$\mu_1=\id$ and $\mu_{a\circ b}=\mu_b\mu_a$ for all $a,b\in A$.
\end{lem}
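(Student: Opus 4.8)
The plan is to verify both claims by direct computation, the only inputs being Lemma~\ref{lem:lambda} and the elementary identity $x\circ y=x\,\lambda_x(y)$, which is immediate from the definition of $\lambda$. For $\mu_1=\id$, note that the neutral elements of $\Add{A}$ and $\Mul{A}$ coincide (Remark~\ref{rem:0=1}), so $\lambda_a(1)=a^{-1}(a\circ1)=a^{-1}a=1$, whence $\overline{\lambda_a(1)}=\overline1=1$ and $\mu_1(a)=1\circ a\circ1=a$.

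For the composition law, fix $a,b,c\in A$ and set $e=\lambda_c(a)$ and $d=\mu_a(c)=\overline e\circ c\circ a$. Expanding the definitions and using associativity of $\circ$,
\[
\mu_b(\mu_a(c))=\overline{\lambda_d(b)}\circ\overline e\circ c\circ a\circ b,
\qquad
\mu_{a\circ b}(c)=\overline{\lambda_c(a\circ b)}\circ c\circ a\circ b .
\]
Cancelling $c\circ a\circ b$ on the right and taking $\circ$-inverses (which reverses the order of a product), the desired equality $\mu_{a\circ b}=\mu_b\mu_a$ becomes equivalent to
\[
\lambda_c(a\circ b)=e\circ\lambda_d(b).
\]
I would then evaluate both sides through $x\circ y=x\,\lambda_x(y)$. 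On the left, $a\circ b=a\,\lambda_a(b)$, and since $\lambda_c\in\Aut\Add{A}$ one gets $\lambda_c(a\circ b)=\lambda_c(a)\,\lambda_c(\lambda_a(b))=e\,\lambda_c(\lambda_a(b))$. On the right, $e\circ\lambda_d(b)=e\,\lambda_e(\lambda_d(b))$. So it suffices to prove $\lambda_c\lambda_a=\lambda_e\lambda_d$ as elements of $\Aut\Add{A}$; by Lemma~\ref{lem:lambda} this reads $\lambda_{c\circ a}=\lambda_{e\circ d}$, and indeed $e\circ d=e\circ\overline e\circ c\circ a=c\circ a$. This establishes $\mu_{a\circ b}=\mu_b\mu_a$ as maps $A\to A$; applying it with $b$ replaced by $\overline b$ gives $\mu_b\mu_{\overline b}=\mu_{\overline b}\mu_b=\mu_1=\id$, so each $\mu_b$ is a bijection of $A$, i.e.\ $\mu$ indeed takes values in $\Sym_A$.

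There is no serious obstacle here; the main point requiring care is bookkeeping — keeping the additive inverse $x^{-1}$ distinct from the $\circ$-inverse $\overline x$, and not conflating composition of the automorphisms $\lambda_{\bullet}$ with the operation $\circ$. The only genuinely skew-brace ingredient beyond elementary group theory is that each $\lambda_c$ is an \emph{additive} automorphism, which is what lets one split $\lambda_c(a\,\lambda_a(b))$ in the key step.
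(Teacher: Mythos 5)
Your proof is correct. Note that the paper itself gives no argument for this lemma --- it is stated with only the citation to Bachiller's Lemma 2.4 --- so your computation is a genuine, self-contained verification rather than a variant of the paper's proof. The reduction is sound: after cancelling $c\circ a\circ b$ and inverting, the identity $\mu_{a\circ b}=\mu_b\mu_a$ does come down to $\lambda_c(a\circ b)=e\circ\lambda_d(b)$ with $e=\lambda_c(a)$ and $d=\overline{e}\circ c\circ a$; splitting both sides via $x\circ y=x\,\lambda_x(y)$ and the additivity of $\lambda_c$ reduces this to $\lambda_{c\circ a}=\lambda_{e\circ d}$, which holds by Lemma~\ref{lem:lambda} since $e\circ d=c\circ a$. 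Your closing observation that $\mu_b\mu_{\overline{b}}=\mu_{\overline{b}}\mu_b=\mu_1=\id$ is also the right way to justify that $\mu$ lands in $\Sym_A$, a point the bare statement leaves implicit. The one thing worth flagging is that, as you prove it, $\mu$ is an antihomomorphism $\Mul{A}\to\Sym_A$ (equivalently a right action), which is exactly what the stated law $\mu_{a\circ b}=\mu_b\mu_a$ asserts and what is needed later in Lemma~\ref{lem:brace2matched}; no discrepancy, just be sure not to read the displayed arrow as claiming a homomorphism.
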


The following lemma was proved by Bachiller for classical braces,
see~\cite[Proposition 2.3]{MR3465351}. The same proof also works for skew
braces. 

\begin{lem}{\cite[Lemma 1.1.17]{BachillerTESIS}}
\label{lem:Bachiller}
Let $A$ be a group and $\lambda\colon A\to\Aut(A)$ be a map such that
\begin{equation}
\label{eq:lambda}
\lambda_{a\lambda_a(b)}=\lambda_a\lambda_b,\quad a,b\in A.
\end{equation}
Then $A$ with $a\circ b=a\lambda_a(b)$ is a skew brace. 
\end{lem}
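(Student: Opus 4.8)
The plan is to verify the three things a skew brace requires: that $(A,\circ)$ is a group, that its identity is the identity $1$ of $(A,\cdot)$, and that the compatibility condition~\eqref{eq:compatibility} holds. The hypothesis~\eqref{eq:lambda}, together with the fact that each $\lambda_a$ is an automorphism, should reduce each of these to a short computation.

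First I would record two normalizations. Since $\lambda_a\in\Aut(A)$ we have $\lambda_a(1)=1$ for every $a$; feeding $a=b=1$ into~\eqref{eq:lambda} then gives $\lambda_1=\lambda_1\lambda_1$, hence $\lambda_1=\id$. Next, associativity of $\circ$: expanding $(a\circ b)\circ c$ as $(a\lambda_a(b))\,\lambda_{a\lambda_a(b)}(c)$ and rewriting $\lambda_{a\lambda_a(b)}=\lambda_a\lambda_b$ via~\eqref{eq:lambda}, while expanding $a\circ(b\circ c)=a\lambda_a\bigl(b\lambda_b(c)\bigr)$ using that $\lambda_a$ is a homomorphism, one sees both sides equal $a\,\lambda_a(b)\,\lambda_a(\lambda_b(c))$. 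Finally $a\circ 1=a\lambda_a(1)=a$ and $1\circ a=\lambda_1(a)=a$, so $1$ is a two-sided identity and $(A,\circ)$ is a monoid.

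To produce inverses I would guess $\overline a:=\lambda_a^{-1}(a^{-1})$. Then $a\circ\overline a=a\lambda_a(\overline a)=aa^{-1}=1$. To see $\overline a$ is also a left inverse, apply~\eqref{eq:lambda} to the pair $(a,\overline a)$: the left-hand side is $\lambda_{a\lambda_a(\overline a)}=\lambda_1=\id$, so $\lambda_a\lambda_{\overline a}=\id$, i.e.\ $\lambda_{\overline a}=\lambda_a^{-1}$; hence $\overline a\circ a=\overline a\,\lambda_{\overline a}(a)=\lambda_a^{-1}(a^{-1})\,\lambda_a^{-1}(a)=\lambda_a^{-1}(a^{-1}a)=1$. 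Thus $(A,\circ)$ is a group. (Alternatively, $b\mapsto a\circ b=a\lambda_a(b)$ is a bijection of $A$ for each $a$, being a left translation in $(A,\cdot)$ composed with $\lambda_a$, so right inverses exist in the monoid $(A,\circ)$, which already forces it to be a group.)

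It remains to verify~\eqref{eq:compatibility}. Because $\lambda_a\in\Aut(A)$,
\[
a\circ(bc)=a\lambda_a(bc)=a\,\lambda_a(b)\,\lambda_a(c)=\bigl(a\lambda_a(b)\bigr)\,a^{-1}\,\bigl(a\lambda_a(c)\bigr)=(a\circ b)\,a^{-1}\,(a\circ c),
\]
as required. I do not anticipate a genuine obstacle; the only point needing a moment's care is confirming that $(A,\circ)$ has two-sided rather than merely one-sided inverses, which the identity $\lambda_{\overline a}=\lambda_a^{-1}$ (or the bijectivity remark) settles cleanly.
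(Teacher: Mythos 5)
Your argument is correct and complete: the direct verification of associativity, the two-sided inverse via $\lambda_{\overline a}=\lambda_a^{-1}$ (obtained by feeding $(a,\overline a)$ into~\eqref{eq:lambda}), and the check of~\eqref{eq:compatibility} are exactly the standard proof of this lemma, which the paper itself does not spell out but delegates to the cited reference. Nothing is missing.
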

%\begin{lem}
%	The following statement hold:
%	\begin{enumerate}
%		\item Let $A$ be a skew brace. The map 
%			\[
%			\lambda\colon\Mul{A}\to\Aut\Add{A},\quad
%			\lambda_a(b)=a^{-1}(a\circ b),
%			\]
%			is a group homomorphism.
%		\item Let $A$ be a group and $\lambda\colon A\to\Aut(A)$ be a map such that
%			\begin{equation}
%				\label{eq:lambda}
%				\lambda_{a\lambda_a(b)}=\lambda_a\lambda_b,\quad a,b\in A.
%			\end{equation}
%			Then $A$ with $a\circ b=a\lambda_a(b)$ is a skew brace. 
%	\end{enumerate}
%\end{lem}

\begin{proof}
	The first claim is~\cite[Corollary 1.10]{MR3647970}. For the second claim see~\cite[Lemma 1.1.17]{BachillerTESIS}.
\end{proof}

\begin{example}
  \label{exa:s3c6}
  Let $A=\Sym_3$ and $\lambda\colon A\to\Sym_A$ be given by
  \begin{align*}
    &\lambda_{\id}=\lambda_{(123)}=\lambda_{(132)}=\id,\\
    &\lambda_{(12)}=\lambda_{(23)}=\lambda_{(13)}=\text{conjugation by $(23)$}.
  \end{align*}
  It is easy to check that $\lambda_{a\lambda_a(b)}=\lambda_a\lambda_b$
  for all $a,b\in A$. Hence $A$ is a skew brace by
  Lemma~\ref{lem:Bachiller}.  Since the transposition $(12)$ has order
  six in the group $\Mul{A}$, it follows that $\Add{A}\simeq\Sym_3$ and
  $\Mul{A}\simeq C_6$.
\end{example}

The following lemma provides another useful tool for constructing skew braces.

\begin{lem}
	\label{lem:dual}
	Let $\Mul{A}$ be a group and $\lambda\colon A\to\Sym_A$ be a group
	homomorphism.  Assume that 
	$\lambda_a(1)=1$ for all $a\in A$ and that
	\begin{equation}
		\label{eq:dual}
		\lambda_a(b\circ\lambda^{-1}_b(c))=\lambda_a(b)\circ\lambda^{-1}_{\lambda_a(b)}\lambda_a(c)
	\end{equation}
	for all $a,b,c\in A$. Then $A$  with 
	$ab=a\circ\lambda^{-1}_a(b)$ is a skew brace. 
\end{lem}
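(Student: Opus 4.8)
The plan is to recover the additive structure from the multiplicative one: first show that $\Add{A}$, with $ab=a\circ\lambda_a^{-1}(b)$, is a group, and then obtain~\eqref{eq:compatibility}, preferably by reducing to Lemma~\ref{lem:Bachiller} rather than by a bare‑hands computation. Throughout I would use that, $\lambda$ being a homomorphism out of $\Mul{A}$, one has $\lambda_1=\id$ and $\lambda_{\overline a}=\lambda_a^{-1}$, and that $\lambda_a(1)=1$ forces $\lambda_a^{-1}(1)=1$. The remark that streamlines everything is this: with $\cdot$ defined as above, $b\cdot c=b\circ\lambda_b^{-1}(c)$ and $\lambda_a(b)\cdot\lambda_a(c)=\lambda_a(b)\circ\lambda_{\lambda_a(b)}^{-1}\lambda_a(c)$, so the hypothesis~\eqref{eq:dual} is exactly the statement that each $\lambda_a$ distributes over $\cdot$, i.e. $\lambda_a(b\cdot c)=\lambda_a(b)\cdot\lambda_a(c)$.

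Next I would verify the group axioms for $(A,\cdot)$. The element $1$ is a two‑sided identity: $1\cdot a=\lambda_1^{-1}(a)=a$ and $a\cdot 1=a\circ\lambda_a^{-1}(1)=a\circ 1=a$. For associativity I would expand $(a\cdot b)\cdot c$ and $a\cdot(b\cdot c)$ using the definition of $\cdot$ and the homomorphism identity $\lambda_{a\cdot b}=\lambda_{a\circ\lambda_a^{-1}(b)}=\lambda_a\lambda_{\lambda_a^{-1}(b)}$; applying~\eqref{eq:dual} with $\overline a$ in place of $a$ (so that $\lambda_{\overline a}=\lambda_a^{-1}$) to the second expression, both sides collapse to $a\circ\lambda_a^{-1}(b)\circ\lambda_{\lambda_a^{-1}(b)}^{-1}\lambda_a^{-1}(c)$. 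Finally $\lambda_a(\overline a)$ is a right inverse of $a$, since $a\cdot\lambda_a(\overline a)=a\circ\overline a=1$. Since an associative monoid with a two‑sided identity in which every element has a right inverse is a group, $(A,\cdot)$ is a group; its identity is $1$ and the inverse of $a$ is $\lambda_a(\overline a)$.

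Now each $\lambda_a$ is a bijective endomorphism of the group $(A,\cdot)$, hence $\lambda\colon A\to\Aut\Add{A}$; moreover $a\cdot\lambda_a(b)=a\circ\lambda_a^{-1}\lambda_a(b)=a\circ b$, so $\lambda_{a\cdot\lambda_a(b)}=\lambda_{a\circ b}=\lambda_a\lambda_b$, which is precisely~\eqref{eq:lambda}. Lemma~\ref{lem:Bachiller} then says that $(A,\cdot)$ with $a*b:=a\cdot\lambda_a(b)$ is a skew brace, and since $a*b=a\circ b$ this is the triple $(A,\cdot,\circ)$, as wanted. (One can instead check~\eqref{eq:compatibility} directly: $a\circ(b\cdot c)=a\cdot\lambda_a(b\cdot c)=a\cdot\lambda_a(b)\cdot\lambda_a(c)$, while $(a\circ b)\cdot a^{-1}\cdot(a\circ c)=a\cdot\lambda_a(b)\cdot\lambda_a(\overline a)\cdot a\cdot\lambda_a(c)$ and $\lambda_a(\overline a)\cdot a=a^{-1}\cdot a=1$.) The only real computation is associativity of $\cdot$, and the one place that needs care there is consistently trading $\lambda_a^{-1}$ for $\lambda_{\overline a}$ so that~\eqref{eq:dual} can be invoked; nothing else in the argument is an obstacle.
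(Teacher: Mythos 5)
Your proposal is correct and follows essentially the same route as the paper: you reinterpret~\eqref{eq:dual} as saying that $\lambda_a$ (equivalently $\lambda_a^{-1}=\lambda_{\overline a}$) is multiplicative for the new operation, verify associativity, the identity and right inverses exactly as the paper does, and then establish the compatibility condition. Your preferred finish via Lemma~\ref{lem:Bachiller} (checking $\lambda_{a\cdot\lambda_a(b)}=\lambda_{a\circ b}=\lambda_a\lambda_b$) is only a cosmetic repackaging of the paper's direct two-line verification of~\eqref{eq:compatibility}, which you also supply.
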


\begin{proof}
	Note that Equation~\eqref{eq:dual} is equivalent to
	\begin{equation}
		\label{eq:better}
		\lambda_a^{-1}(bc)=\lambda_a^{-1}(b)\lambda_a^{-1}(c).
	\end{equation}
	We prove that the operation is associative:
	\begin{align*}
		a(bc) &= a\circ\lambda^{-1}_a(bc)
		=a\circ(\lambda^{-1}_a(b)\lambda^{-1}_a(c))\\
		&=a\circ\lambda^{-1}_a(b)\circ \lambda^{-1}_{a\circ \lambda^{-1}_a(b)}(c)
		=(ab)\circ\lambda^{-1}_{ab}(c)=(ab)c.
	\end{align*}

	The neutral element $1$ of $A$ is a right identity: $a1=a\circ\lambda^{-1}_a(1)=a\circ 1=a$. 
	The element $a^{-1}=\lambda_a(\overline{a})$ is a right inverse of $A$ since 
	\[
	aa^{-1}=a\circ\lambda^{-1}_a(a^{-1})=a\circ\lambda^{-1}_a\lambda_a(\overline{a})=a\circ\overline{a}=1.
	\]
	Therefore $\Add{A}$ is a group by~\cite[\S1.1.2]{MR1357169}. 

	The brace compatibility condition follows from Equation~\eqref{eq:better}:
	\begin{align*}
		&(a\circ b)a^{-1}(a\circ c)=(a\circ b)\lambda_a(c)=a\lambda_a(b)\lambda_a(c)=a\lambda_a(bc)=a\circ (bc).
	\end{align*}
	The lemma is proved. 
\end{proof}

%\begin{exa}
%	Let $A_1,\dots,A_n$ be groups. Assume that 
%	for each $i\in\{2,\dots,n-1\}$ there is a group homomorphism 
%	$\rho_{i}\colon A_{i-1}\ltimes_{\rho_{i-1}} A_i\to\Aut(A_{i+1})$. 
%	Then the set $A_1\times\cdots\times A_n$ with
%	additive structure given by
%	\[
%	(a_1,\dots,a_n)\cdot(a_1',\dots,a_n')=(a_1a_1',\dots,a_n a_n')
%	\]
%	and multiplicative structure given by 
%	\begin{multline*}
%		(a_1,\dots,a_n)\circ (a_1',\dots,a_n')\\
%		=(a_1a_1',a_2\rho_1(a_1)(a_2'),a_3\rho_2(a_1,a_2)(a_3'),\dots,a_n\rho_{n-1}(a_1,\dots,a_{n-1})(a_n'))    
%	\end{multline*}
%	is a skew brace. This brace will be denoted by
%	\[
%	A_1\ltimes_{\rho_1} A_2\ltimes_{\rho_2}A_3\ltimes_{\rho_3}\cdots\ltimes_{\rho_{n-1}}A_n.
%	\]
%\end{exa}

\begin{defn}
	A skew brace $A$ is said to be a \emph{two-sided} skew brace if 
	\[
	(ab)\circ c=(a\circ c)c^{-1}(b\circ c)
	\]
	holds for all $a,b,c\in A$. 
\end{defn}

\begin{exa}
  Let $A$ be a skew brace with abelian multiplicative group. Then $A$ is
  a two-sided skew brace.
\end{exa}

\begin{exa}
  Let $n\in\N$ be such that $n=p_1^{a_1}\cdots p_k^{a_k}$, where the $p_j$ are
  distinct primes, all $a_j\in\{0,1,2\}$ and $p_i^m\not\equiv 1\pmod{p_j}$ for
  all $i,j,m$ with $1\leq m\leq a_i$. Then every skew brace of size $n$ is a
  two-sided classical brace, since every group of order $n$ is abelian, see for
  example~\cite{MR1786236}.  
\end{exa}

\begin{exa}
	\label{exa:d8q8}
	Let 
	\[
	A=\langle r,s:r^4=s^2=1,srs=r^{-1}\rangle
	\]
	be the dihedral group of eight elements and let
	\[
	B=\{1,-1,i,-i,j,-j,k,-k\}
	\]
	be the quaternion group of eight elements.  Let
	$\pi:B\to A$ be the bijective map given by 
%	\[
%	  \pi(r^as^b)=(-1)^{ab+a}i^bj^a.
%	 \]
	\begin{align*}
		1\mapsto 1 &, & -1\mapsto r^2 &,  & -k\mapsto r^3s &,&  k\mapsto rs &,\\
		i\mapsto s &, & -i\mapsto r^2s &, &  j\mapsto r^3 &, & -j\mapsto r &.
	\end{align*}
	A straightforward calculation shows that $A$ with 
	\[
	  x\circ y=\pi(\pi^{-1}(x)\pi^{-1}(y))
	\]
	is a skew brace with additive group $A$ and multiplicative group
	isomorphic to $B$. This skew brace is two-sided.
\end{exa}

The following proposition provides other examples:

\begin{pro}
	Let $A$ be a skew brace such that $\lambda_a(a)=a$ for all $a\in A$.
	Then $A$ is a two-sided skew brace.
\end{pro}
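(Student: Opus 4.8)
The plan is to derive the two-sided law directly from the ordinary skew brace compatibility~\eqref{eq:compatibility} by applying the inversion map, after first observing that under the hypothesis the $\circ$-inverse and the $\cdot$-inverse of every element coincide. The conceptual content is a single remark: in such a skew brace the inversion map is simultaneously an anti-automorphism of $\Add{A}$ and of $\Mul{A}$, and this is exactly what interchanges the (always valid) left distributive law with the desired right one.

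First I would prove that $\lambda_a(a)=a$ for all $a$ forces $\overline a=a^{-1}$ for all $a$. Indeed, by Lemma~\ref{lem:lambda} the map $\lambda_a$ is an automorphism of $\Add{A}$, so from $\lambda_a(a)=a$ one gets $\lambda_a(a^{-1})=a^{-1}$, hence $\lambda_a^{-1}(a^{-1})=a^{-1}$; and $\overline a=\lambda_a^{-1}(a^{-1})$ by the remark following Lemma~\ref{lem:lambda}. Consequently the bijection $\iota\colon A\to A$, $\iota(a)=a^{-1}$, also satisfies $\iota(a)=\overline a$, and therefore reverses both products: $\iota(ab)=b^{-1}a^{-1}=\iota(b)\iota(a)$ automatically, and $\iota(a\circ b)=\overline{a\circ b}=\overline b\circ\overline a=\iota(b)\circ\iota(a)$ because inversion in the group $\Mul{A}$ reverses $\circ$.

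Next I would apply~\eqref{eq:compatibility} to the triple $(c^{-1},b^{-1},a^{-1})$ in place of $(a,b,c)$, obtaining $c^{-1}\circ(b^{-1}a^{-1})=(c^{-1}\circ b^{-1})\,c\,(c^{-1}\circ a^{-1})$, and then take $\cdot$-inverses of both sides. On the left, using that $\iota$ equals the two inversions and reverses $\circ$, the term becomes $\overline{b^{-1}a^{-1}}\circ\overline{c^{-1}}=(ab)\circ c$. On the right, reversing the $\cdot$-product and then simplifying $(c^{-1}\circ a^{-1})^{-1}=\overline{a^{-1}}\circ\overline{c^{-1}}=a\circ c$ and $(c^{-1}\circ b^{-1})^{-1}=b\circ c$ yields $(a\circ c)\,c^{-1}\,(b\circ c)$. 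Equating the two sides gives precisely $(ab)\circ c=(a\circ c)c^{-1}(b\circ c)$, i.e.\ $A$ is two-sided.

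The only point demanding care is the bookkeeping of the two distinct inverse operations and the order reversals they induce; I do not expect any genuine obstacle once the identification $\overline a=a^{-1}$ from the first step is in hand, since from that moment the computation is forced.
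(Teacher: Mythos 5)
Your proof is correct and is essentially the paper's own argument: both first deduce $\overline{a}=a^{-1}$ from $\lambda_a(a)=a$, and then obtain the two-sided law by applying the compatibility condition~\eqref{eq:compatibility} to the inverted triple and inverting back (the paper phrases this as $x\circ y=(y^{-1}\circ x^{-1})^{-1}$ and reads the same computation in the other direction). No substantive difference.
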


\begin{proof}
	First we notice that $a^{-1}=\overline{a}$ since
	$\overline{a}=\lambda^{-1}_a(a^{-1})=\lambda_a^{-1}(a)^{-1}=a^{-1}$. In particular, 
	\begin{equation}
		\label{eq:xoy}
		x\circ y=\overline{\overline{y}\circ\overline{x}}=(y^{-1}\circ x^{-1})^{-1}
	\end{equation}
	for all $x,y\in A$. Using~\eqref{eq:compatibility} and~\eqref{eq:xoy} one obtains that 
	\begin{align*}
		(ab)\circ c &= (c^{-1}\circ (b^{-1}a^{-1}))^{-1}\\
		&=\left( (c^{-1}\circ b^{-1})c(c^{-1}\circ a^{-1})\right)^{-1}\\
		&=(c^{-1}\circ a^{-1})^{-1}c^{-1}(c^{-1}\circ b^{-1})^{-1}\\
		&=(a\circ c)c^{-1}(b\circ c).
	\end{align*}
	This completes the proof.
\end{proof}

Now we show a non-classical skew brace that is not two-sided: 

\begin{exa}
	\label{exa:simple}
	Let $G$ be the group generated by the 
	permutations 
	\[
	(1263)(48ba)(57c9),
	\quad
	(145)(278)(39a)(6bc).
	\]
	Then $G$ is a group of order twelve isomorphic to $C_3\rtimes C_4$. 
	Let $\pi\colon G\to\Alt_4$ be the bijective map given by
	\begin{align*}
		&\id\mapsto\id, && (16)(23)(4b)(5c)(79)(8a)\mapsto (14)(23),\\
		& (145)(278)(39a)(6bc)\mapsto (234), && (1b564c)(29837a)\mapsto (143),\\
		& (154)(287)(3a9)(6cb)\mapsto (243), && (1c465b)(2a7389)\mapsto (142),\\
		& (1362)(4ab8)(59c7)\mapsto (13)(24), && (1263)(48ba)(57c9)\mapsto (12)(34),\\
		& (1a68)(253c)(49b7)\mapsto (132), && (186a)(2c35)(47b9)\mapsto (124),\\
		& (1967)(243b)(5ac8)\mapsto (134), && (1769)(2b34)(58ca)\mapsto (123).
	\end{align*}
	A straightforward calculation shows that $\Alt_4$
	with the operation
	\[
		\sigma\circ\tau=\pi(\pi^{-1}(\sigma)\pi^{-1}(\tau))
	\]
	is a skew brace. 
	
	Let $a=(14)(23)$ and $b=c=(234)$. Then 
	\[
		(12)(34)=(ab)\circ c\ne (a\circ c)c^{-1}(a\circ b)=(123),
	\]
	hence it is not two-sided.
\end{exa}

%\textcolor{blue}{
%A \emph{rack} is a set $X$ with a map $X\times X\to X$, $(x,y)\mapsto x\triangleleft
%y$, such that for each $y\in X$ the map $x\mapsto x\triangleleft y$ is
%bijective and $\lambda_a^{-1}(b^{-1}\partial(x)b)$ 
%$(x\triangleleft y)\triangleleft z=(x\triangleleft
%z)\triangleleft (y\triangleleft z)$ 
%holds for all $x,y,z\in X$.}

\subsection{Skew braces with nilpotent additive group}

Skew braces with nilpotent additive group are similar to classical braces.  It
was observed in \cite{smoktunowicz2} Sylow subgroups of the additive group of
finite classical braces are also braces. 

 \begin{thm}
 \label{thm:Sylow_is_brace}
	Let $A$ be a finite skew brace whose additive group $\Add{A}$ is nilpotent and 
	decomposes as $A=A_1\cdots  A_k$, where $A_j$ is a Sylow
	subgroup of order $p_j^{\alpha_j}$, $p_j$ is a prime number and
	$\alpha_j\geq1$. Then each $A_{i}$ is a skew brace.
\end{thm}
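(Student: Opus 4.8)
The plan is to show that each Sylow subgroup $A_i$ of the nilpotent group $\Add{A}$ is closed under the circle operation $\circ$, so that $(A_i,\cdot,\circ)$ becomes a skew brace. Since $\Add{A}$ is finite and nilpotent, it is the internal direct product of its Sylow subgroups, so each $A_i$ is a characteristic (in particular normal) subgroup of $\Add{A}$ and every element of $A$ factors uniquely as a product of elements of the distinct $A_j$. The first step is to exploit Lemma~\ref{lem:lambda}: the map $\lambda\colon\Mul{A}\to\Aut\Add{A}$ is a group homomorphism, and since $A_i$ is characteristic in $\Add{A}$, we have $\lambda_a(A_i)=A_i$ for every $a\in A$. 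This is the crucial structural input: the $\lambda$-action permutes each Sylow subgroup.

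Next I would show $A_i$ is closed under $\circ$. Take $a,b\in A_i$. By definition $a\circ b=a\lambda_a(b)$. Since $a\in A_i$ and $\lambda_a(b)\in\lambda_a(A_i)=A_i$, and $A_i$ is a subgroup of $\Add{A}$, we get $a\circ b=a\lambda_a(b)\in A_i$. Thus $A_i$ is closed under $\circ$. It remains to check that $(A_i,\circ)$ is actually a group, not merely a closed subset: associativity of $\circ$ is inherited from $A$, and the neutral element $1$ lies in every $A_j$ by Remark~\ref{rem:0=1}; for inverses one uses that $\overline{a}=\lambda_a^{-1}(a^{-1})$, and since $a^{-1}\in A_i$ and $\lambda_a^{-1}(A_i)=A_i$, also $\overline{a}\in A_i$. (Alternatively, a finite subset of a group closed under the operation is automatically a subgroup.) Hence $(A_i,\circ)$ is a subgroup of $\Mul{A}$. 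The compatibility condition~\eqref{eq:compatibility} for the triple $(A_i,\cdot,\circ)$ holds because it holds in $A$ and all the operations and inverses involved stay inside $A_i$. Therefore $(A_i,\cdot,\circ)$ is a skew brace.

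The main point to get right is the verification that $\lambda_a$ preserves $A_i$ for \emph{all} $a\in A$ (not only $a\in A_i$); this is exactly where nilpotency of $\Add{A}$ enters, via the fact that Sylow subgroups of a finite nilpotent group are characteristic. Once that is established, closure of $A_i$ under $\circ$ and under $\circ$-inverses is immediate, and no serious obstacle remains. A remark worth adding is that this argument shows more: since $a\circ b=a\lambda_a(b)=ab$ whenever $a\in A_i$ and $b\in A_j$ with $i\neq j$ (because then $\lambda_a(b)\in A_j$, but also $\lambda$ restricted appropriately together with the direct-product decomposition forces the two operations to interact compatibly), one in fact recovers $A$ as built from the $A_i$ via the constructions of Section~\ref{examples}; but for the statement as given, establishing that each $A_i$ is a sub-skew-brace suffices.
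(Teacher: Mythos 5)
Your proof is correct and follows essentially the same route as the paper: the paper also reduces the statement to showing that $A_1$ is closed under $\circ$ and under $\circ$-inverses, using that $\lambda_a\in\Aut\Add{A}$ preserves $A_1$ (the paper checks this by noting $\lambda_a$ preserves orders of elements, which is just the explicit form of your observation that the Sylow subgroups of a nilpotent group are characteristic). One caveat: your closing aside is false --- for $a\in A_i$, $b\in A_j$ with $i\ne j$ one gets $\lambda_a(b)\in A_j$ but not $\lambda_a(b)=b$ in general, so $a\circ b\ne ab$ can occur; for instance, in the skew brace of Example~\ref{exa:sd} with $M=C_3$, $A=C_2$ acting by inversion, the additive group is $C_6$ and the two operations differ on mixed Sylow elements. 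Since you explicitly do not use that remark for the statement, the proof itself stands.
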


\begin{proof}
	It is enough to prove that the subgroup $A_1$ of $\Add{A}$ is a subgroup of
	$\Mul{A}$. Remark~\ref{rem:0=1} implies that $A_1\ne\emptyset$.  Let
	$a\in A$ and $b\in A_1$. Since $p_1^{\alpha_1}b=0$ and $\lambda_a$ is a
	group automorphism of $\Add{A}$,
	$0=\lambda_a(p_1^{\alpha_1}b)=p_1^{\alpha_1}\lambda_a(b)$. Hence
	$\lambda_a(b)\in A_1$ and $\lambda^{-1}_a(b)=\lambda_{\overline{a}}(b)\in
	A_1$. Therefore $a\circ b=a\lambda_a(b)\in A_1$ and
	$\overline{a}=\lambda^{-1}_a(a^{-1})\in A_1$ for all $a,b\in A_1$.
\end{proof}

\begin{cor}
\label{cor:Sylow_are_braces}
    Let $A$ be a finite skew brace whose additive group $\Add{A}$ is nilpotent and 
	decomposes as $A=A_1 \cdots  A_k$, where $A_j$ is a Sylow
	subgroup of order $p_j^{\alpha_j}$, $p_j$ is a prime number and
	$\alpha_j\geq1$. 
	Then each $A_{i_1}\cdots A_{i_l}$ is a skew brace.
\end{cor}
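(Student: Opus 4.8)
The plan is to prove that $B:=A_{i_1}\cdots A_{i_l}$ is at the same time a subgroup of $\Add{A}$ and a subgroup of $\Mul{A}$. Once this is done, the compatibility condition~\eqref{eq:compatibility} holds for all elements of $B$ simply because it holds for all elements of $A$, and $1\in B$ is the common identity by Remark~\ref{rem:0=1}; hence $(B,\cdot,\circ)$ is a skew brace, which is what we want.

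The first step is to recall that a finite nilpotent group is the internal direct product of its Sylow subgroups. Thus $\Add{A}=A_1\times\cdots\times A_k$, and each $A_j$, being the unique Sylow $p_j$-subgroup, is a characteristic subgroup of $\Add{A}$. Therefore any partial product such as $B$ is again a characteristic subgroup of $\Add{A}$; in particular $B$ is a subgroup of $\Add{A}$, and $\lambda_a(B)=B$ for every $a\in A$, because each $\lambda_a$ lies in $\Aut\Add{A}$ by Lemma~\ref{lem:lambda}. This is exactly the mechanism already exploited in the proof of Theorem~\ref{thm:Sylow_is_brace}, where it was checked that $\lambda_a$ maps each Sylow subgroup into itself.

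The second step is to check that $B$ is closed under $\circ$. If $a,b\in B$, then $\lambda_a(b)\in\lambda_a(B)=B$; since $a\circ b=a\,\lambda_a(b)$ by Lemma~\ref{lem:lambda} and $B$ is a subgroup of $\Add{A}$, we get $a\circ b\in B\cdot B=B$. Because $B$ is finite and nonempty and $\circ$ restricts to an associative, cancellative operation on it, $B$ is a subgroup of $\Mul{A}$; if one prefers to exhibit inverses explicitly, one notes that $\overline{a}=\lambda_a^{-1}(a^{-1})$ lies in $B$ since $a^{-1}\in B$ and $\lambda_a^{-1}\in\Aut\Add{A}$ also preserves the characteristic subgroup $B$. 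Combining the two steps gives the corollary.

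I do not expect a genuine obstacle here: the statement is a formal consequence of Theorem~\ref{thm:Sylow_is_brace} together with the structure theory of finite nilpotent groups. The only point deserving attention is the observation that each $\lambda_a$ preserves every partial product of Sylow subgroups — equivalently, that such a partial product is characteristic in $\Add{A}$ — and this is precisely the computation carried out inside the proof of Theorem~\ref{thm:Sylow_is_brace}.
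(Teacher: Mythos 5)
Your proof is correct and follows essentially the same route as the paper: the paper deduces the corollary from Theorem~\ref{thm:Sylow_is_brace} by induction on $k$, and the mechanism there is exactly the one you isolate, namely that every $\lambda_a\in\Aut\Add{A}$ preserves the Hall subgroup $B=A_{i_1}\cdots A_{i_l}$, so that $a\circ b=a\lambda_a(b)$ and $\overline{a}=\lambda_a^{-1}(a^{-1})$ stay in $B$. Your only cosmetic variation is to replace the induction and the explicit exponent computation by the observation that $B$ is characteristic in $\Add{A}$, which is equivalent.
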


\begin{proof}
	It follows from Theorem~\ref{thm:Sylow_is_brace} and induction on $k$. 
\end{proof}

\begin{cor}
	\label{cor:is_solvable}
	Let $A$ be a finite skew brace whose additive group $\Add{A}$ is nilpotent.
	Then $\Mul{A}$ is solvable. 
\end{cor}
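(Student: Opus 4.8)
The plan is to transport the Sylow decomposition of $\Add{A}$ into $\Mul{A}$ by means of Corollary~\ref{cor:Sylow_are_braces}, exhibit there a complete family of pairwise permuting Sylow subgroups, and then invoke the classical theorem of P.~Hall characterising finite solvable groups as precisely those admitting such a Sylow basis.

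First I would record the structure coming from the nilpotence of $\Add{A}$: since $\Add{A}$ is finite nilpotent it is the internal direct product $\Add{A}=A_1\times\cdots\times A_k$ of its Sylow subgroups, with $|A_j|=p_j^{\alpha_j}$. Consequently, for every subset $I=\{i_1,\dots,i_l\}\subseteq\{1,\dots,k\}$ the product set $A_I:=A_{i_1}\cdots A_{i_l}$ is a subgroup of $\Add{A}$, and by Corollary~\ref{cor:Sylow_are_braces} it is in fact a sub-skew-brace of $A$; in particular $A_I$ is a subgroup of $\Mul{A}$ on the same underlying set. Because $|A|=\prod_j p_j^{\alpha_j}$, each $A_i$ is a subgroup of $\Mul{A}$ of order $p_i^{\alpha_i}$, hence a Sylow $p_i$-subgroup of $\Mul{A}$ as well.

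Next I would verify that these Sylow subgroups of $\Mul{A}$ pairwise $\circ$-permute. Fixing $i\neq j$, both $A_i$ and $A_j$ are subgroups of the group $(A_{\{i,j\}},\circ)$, and $A_i\cap A_j=\{1\}$ because $p_i^{\alpha_i}$ and $p_j^{\alpha_j}$ are coprime; therefore $A_i\circ A_j$ is a subset of $A_{\{i,j\}}$ of cardinality $|A_i|\,|A_j|=|A_{\{i,j\}}|$, so $A_i\circ A_j=A_{\{i,j\}}=A_j\circ A_i$. Thus $\{A_1,\dots,A_k\}$ is a set of Sylow subgroups of $\Mul{A}$, one for each prime dividing $|A|$, that permute pairwise, i.e.\ a Sylow basis. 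By Hall's theorem a finite group possessing a Sylow basis is solvable, and the corollary follows.

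I do not expect a serious obstacle: essentially all the work has been done in Corollary~\ref{cor:Sylow_are_braces}, which is what lets the Sylow decomposition of the additive group be read inside the multiplicative group. The only point requiring a little care is the elementary order count in the previous paragraph, which guarantees that the $\circ$-products of these subgroups are again the corresponding Hall subgroups, so that the family is a genuine Sylow basis and not merely a generating set. One could instead appeal to the Kegel--Wielandt theorem on products of pairwise permutable nilpotent subgroups, but the Sylow-basis formulation via Hall's theorem is the most direct route.
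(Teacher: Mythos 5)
Your proof is correct and follows essentially the paper's route: both arguments rest on Corollary~\ref{cor:Sylow_are_braces} together with Hall's characterization of finite solvable groups. The paper is slightly more direct -- it applies that corollary with $I=\{1,\dots,k\}\setminus\{i\}$ to read off, for each prime $p_i$, a $p_i$-complement of $\Mul{A}$ and then cites Hall's $p$-complement criterion, so the pairwise-permutability check you perform to assemble a Sylow basis is not needed.
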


\begin{proof}
	By Corollary~\ref{cor:Sylow_are_braces}, for each prime $p$ there exists
	subgroup of $\Add{A}$ of order coprime with $p$. Thus the claim follows
	from Hall Theorem; see for example~\cite[\S9.1.8]{MR1357169}.
\end{proof}

\begin{rem}
  Corollary~\ref{cor:is_solvable} was proved by Byott in the context of 
  Hopf--Galois extensions; see~\cite[Theorem 1]{MR3425626}.
\end{rem}

We recall some questions from~\cite{BachillerTESIS}, see
also~\cite[\S1]{MR3425626}. 

\begin{question}
  \label{q:1}
	Let $A$ be a finite skew brace with solvable additive group. Is the
	multiplicative group solvable?
\end{question}

\begin{question}
  \label{q:2}
	Let $A$ be a finite skew brace with nilpotent multiplicative group. Is
	the additive group solvable?
\end{question}

\begin{rem}
  Partial results to Questions~\ref{q:1} and~\ref{q:2} can be found in the
  context of Hopf--Galois extensions; see for
  example~\cite{MR2011974,MR3425626}.
\end{rem}

\subsection{Bijective $1$-cocycles}
\label{1cocycles}

In this subsection we review the equivalence between skew braces and bijective
$1$-cocycles. 

Let $G$ and $A$ be groups such that $G$ acts on $A$ by automorphisms.  Recall
that a \emph{bijective $1$-cocycle} is an invertible map $\pi\colon G\to A$
such that
\begin{align*}
	\pi(gh)=\pi(g)(g\cdot \pi(h))
\end{align*}
for all $g,h\in G$.

\begin{example}
  The maps of Examples~\ref{exa:d8q8} and~\ref{exa:simple} 
  are bijective $1$-cocycles.
\end{example}

Let $\pi\colon G\to A$ and $\eta\colon H\to B$ be bijective $1$-cocycles.  A
\emph{homorphism} between these bijective $1$-cocycles is a pair $(f,g)$ of
group homomorphisms  $f\colon G\to H$, $g\colon A\to B$ such that
\begin{align*}
&\eta f=g \pi,\\
&g(h\cdot a)=f(h)\cdot g(a),&&a\in A,\;h\in G.
\end{align*}

Bijective $1$-cocycles form a category.  

For a given group $A$ let $\Bc(A)$ be
the full subcategory of the category of bijective $1$-cocycles with objects
$\pi\colon G\to A$ and let $\BrAD(A)$ be the full subcategory of the category of
skew braces with additive group $A$.

\begin{thm}{\cite[Proposition 1.11]{MR3647970}}
  \label{thm:1cocycle} Let $A$ be a group. The categories $\BrAD(A)$
  and $\Bc(A)$ are equivalent.  
\end{thm}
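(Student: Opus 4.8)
The plan is to exhibit mutually inverse functors between $\BrAD(A)$ and $\Bc(A)$ and check they are indeed functorial and inverse to each other. First I would define a functor $F\colon\BrAD(A)\to\Bc(A)$ as follows: given a skew brace $(A,\cdot,\circ)$, set $G=\Mul{A}=(A,\circ)$ and let $G$ act on $(A,\cdot)$ via the map $\lambda$ of Lemma~\ref{lem:lambda}; since $\lambda\colon\Mul{A}\to\Aut\Add{A}$ is a group homomorphism this is a genuine action by automorphisms. Take $\pi=\id_A$ as a set map $G\to A$. The cocycle identity $\pi(g\circ h)=\pi(g)(g\cdot\pi(h))$ becomes $g\circ h = g\,\lambda_g(h)$, which is exactly the definition $\lambda_g(h)=g^{-1}(g\circ h)$ rearranged; hence $\pi$ is a bijective $1$-cocycle. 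On morphisms, a brace homomorphism $f\colon A\to B$ is simultaneously a group homomorphism $\Mul{A}\to\Mul{B}$ and $\Add{A}\to\Add{B}$, and one checks $f$ is equivariant for the $\lambda$-actions because $f(\lambda_a(b))=f(a^{-1}(a\circ b))=f(a)^{-1}(f(a)\circ f(b))=\lambda_{f(a)}(f(b))$; so $(f,f)$ is a morphism of $1$-cocycles, and $F$ is a functor.

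Next I would define $G\colon\Bc(A)\to\BrAD(A)$. Given a bijective $1$-cocycle $\pi\colon G\to A$ with $G$ acting on $A$ by automorphisms, transport the group structure of $G$ to $A$ via $\pi$, defining $a\circ b=\pi(\pi^{-1}(a)\pi^{-1}(b))$; then $\pi$ becomes a group isomorphism $(G,\cdot_G)\to(A,\circ)$. Keeping the original multiplication of $A$ as the additive operation, I must verify the compatibility condition~\eqref{eq:compatibility}. Writing $g=\pi^{-1}(a)$ and using the cocycle relation, $\pi(g)=a$ forces the action of $g$ on $A$ to be $\lambda_a(x):=g\cdot x = a^{-1}(a\circ x)$, since $a\circ x = \pi(g\,\pi^{-1}(x)) = \pi(g)(g\cdot x) = a(g\cdot x)$. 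Because $G$ acts by automorphisms of $(A,\cdot)$, the map $x\mapsto g\cdot x$ is an automorphism, so $\lambda_a\in\Aut\Add{A}$, and then~\eqref{eq:compatibility} is the single computation $(a\circ b)a^{-1}(a\circ c) = a\lambda_a(b)\,a^{-1}\,a\lambda_a(c) = a\lambda_a(b)\lambda_a(c) = a\lambda_a(bc) = a(a^{-1}(a\circ(bc))) = a\circ(bc)$. On morphisms, a $1$-cocycle homomorphism $(f,g)\colon(\pi\colon G\to A)\to(\eta\colon H\to B)$ with $g$ a map $A\to B$ that is automatically $\eta f=g\pi$; since both are the transported group structures, $g$ is a homomorphism for $\circ$, and $g$ is already a homomorphism for $\cdot$ (it is $g\colon A\to B$ a group map—here I should be careful and note that in this category $A=B$ is fixed so $g=\id$ on objects but the functor is still well-defined on the full subcategory; in general $g$ being a $\cdot$-homomorphism is part of the data). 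So $g$ is a brace homomorphism.

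Finally I would check $F\circ G\cong\id$ and $G\circ F\cong\id$. Starting from a skew brace, $F$ produces $\pi=\id$ with action $\lambda$; applying $G$ transports back and recovers $a\circ b = \id(\id^{-1}(a)\,\lambda\text{-product}\,\id^{-1}(b))$, which unwinds to $a\lambda_a(b)=a^{-1}(a\circ b)$ multiplied back, i.e.\ the original $\circ$; and the additive operation is untouched. Conversely, starting from $\pi\colon G\to A$, applying $G$ then $F$ gives the cocycle $\id\colon(A,\circ)\to A$ with the $\lambda$-action, which is isomorphic to the original $\pi$ via the pair $(\pi,\id_A)$—this is where I check naturality of the isomorphism. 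The main obstacle I anticipate is purely bookkeeping: making sure the equivariance/compatibility conditions match up on the nose (particularly that "acts by automorphisms" is exactly what makes each $\lambda_a$ land in $\Aut\Add{A}$, which is used in verifying~\eqref{eq:compatibility}), and verifying that the two natural isomorphisms are genuinely natural rather than just objectwise bijections. Since the paper cites \cite[Proposition 1.11]{MR3647970} for this statement, I would keep the argument brief and refer there for the routine diagram chases.
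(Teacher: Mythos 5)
Your argument is correct, and it is essentially the only possible one: the paper itself gives no proof of this theorem (it is quoted from the cited reference, Proposition 1.11 there), and your two functors --- the identity cocycle $\id\colon\Mul{A}\to\Add{A}$ with the $\lambda$-action in one direction, and transport of structure $a\circ b=\pi(\pi^{-1}(a)\pi^{-1}(b))$ in the other --- are exactly the ones that citation uses, and are implicitly at work in Examples~\ref{exa:d8q8} and~\ref{exa:simple}. One small correction to your morphism bookkeeping: in the full subcategory $\Bc(A)$ the second component $g\colon A\to A$ of a morphism $(f,g)$ is an arbitrary additive endomorphism satisfying $\eta f=g\pi$, not the identity; that it is then automatically a $\circ$-homomorphism follows from $\eta^{-1}g=f\pi^{-1}$, the one-line check you gesture at.
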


\begin{rem}
  In the context of classical braces, Theorem~\ref{thm:1cocycle} was implicit
  in the work of Rump; see~\cite{MR2278047,MR3291816} or~\cite{GI15}.  
  %For skew braces it was
  %proved in~\cite[Proposition 1.11]{MR3647970}, see also~\cite[Theorem
  %1.12]{MR3611314}. 
\end{rem}

\begin{rem} In~\cite{MR1653340} Etingof and Gelaki give a method of
  constructing finite-dimensional complex semisimple triangular Hopf algebras.
  They show how any non-abelian group which admits a bijective 1-cocycle gives
  rise to a semisimple minimal triangular Hopf algebra which is not a group
  algebra.  \end{rem}

%\begin{proof}	Lemma~\ref{lem:brace21cocycle} proves the existence of a
%functor $F\colon \Br(A)\to\Bc(A)$. Similarly Lemma~\ref{lem:1cocycle2brace}
%proves that there is a functor $G\colon\Bc(A)\to\Br(A)$.  Now clearly $GF=\id$
%and $FG\simeq\id$.  \end{proof}

%\begin{rem} The result of Theorem~\ref{thm:1cocycle} for classical braces was
%implicit in the work of Rump, see~\cite{MR2278047,MR3291816}.  For skew braces
%it was proved in~\cite[Proposition 1.11]{GV}, see also~\cite[Theorem
%1.12]{MR3611314}.  \end{rem}

\section{Examples and constructions} \label{examples}

\subsection{Factorizable groups} 

For an introduction to the theory of factorizable groups we refer
to~\cite{MR1211633}.  Recall that a group $A$ \emph{factorizes} through two
subgroups $B$ and $C$ if $A=BC=\{bc:b\in B,c\in C\}$. The factorization is said
to be \emph{exact} if $B\cap C=1$.

The following proposition produces factorizable groups from
classical and skew braces:

\begin{pro}
  \label{pro:bracefactorization}
  Let $A$ be a skew brace. Assume that there exist subbraces $B$ and $C$ such
  that $\Add{A}$ admits an exact factorization through $\Add{B}$ and $\Add{C}$.
  If $\lambda_b(c)\in C$ for all $b\in B$ and $c\in C$, then $\Mul{A}$ admits
  an exact factorization through $\Mul{B}$ and $\Mul{C}$. 
\end{pro}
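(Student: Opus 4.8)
The plan is to show that $\Mul{A} = \Mul{B}\circ\Mul{C}$ and that $\Mul{B}\cap\Mul{C}=1$. The second assertion is immediate: since $B$ and $C$ are subbraces, the underlying sets of $\Mul{B}$ and $\Mul{C}$ coincide with those of $\Add{B}$ and $\Add{C}$, which intersect trivially by hypothesis; and by Remark~\ref{rem:0=1} the identity of $\Mul{A}$ is the identity $1$ of $\Add{A}$, so $\Mul{B}\cap\Mul{C}=\Add{B}\cap\Add{C}=\{1\}$. It remains to prove the factorization $A = B\circ C$ as sets.

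The key computation uses Lemma~\ref{lem:lambda} to pass between the two products. Given $a\in A$, write $a = bc$ with $b\in B$, $c\in C$ (possible by the exact factorization of $\Add A$). Now I want to rewrite $bc$ using $\circ$. By the definition of $\lambda$ we have $b\circ x = b\lambda_b(x)$ for all $x$, so $b\circ\lambda_b^{-1}(c) = bc = a$. Thus $a = b\circ c'$ where $c' = \lambda_b^{-1}(c) = \lambda_{\overline b}(c)$. The hypothesis $\lambda_b(c)\in C$ for all $b\in B$, $c\in C$ must be leveraged to conclude $c'\in C$: since $B$ is a subbrace, $\overline b\in B$ (the $\circ$-inverse of an element of the subbrace $B$ stays in $B$), hence $c' = \lambda_{\overline b}(c)\in C$ by hypothesis. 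This shows every $a\in A$ lies in $B\circ C$, i.e. $\Mul A = \Mul B\circ\Mul C$. Combined with the trivial intersection, $\Mul A$ factorizes exactly through $\Mul B$ and $\Mul C$.

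I should double-check that $\Mul B$ and $\Mul C$ really are subgroups of $\Mul A$, not merely subsets closed under $\circ$: this is exactly what it means for $B$ and $C$ to be subbraces, so it is part of the hypothesis and nothing new is needed. One subtle point worth a sentence in the write-up: the hypothesis is stated only as $\lambda_b(c)\in C$, and I use it in the form $\lambda_{\overline b}(c)\in C$; this is legitimate precisely because $\overline b\in B$, which is where the closure of the subbrace $B$ under $\circ$-inverses enters.

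The main obstacle is essentially bookkeeping: keeping straight which inverse ($^{-1}$ or $\overline{\ \cdot\ }$, i.e. $\Add{}$-inverse versus $\Mul{}$-inverse) is in play at each step, and making sure the one invocation of the hypothesis is applied to an element that genuinely lies in $B$. There is no deep difficulty — the content is entirely in the identity $bc = b\circ\lambda_{\overline b}(c)$ together with the stability condition on $\lambda$.
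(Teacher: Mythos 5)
Your proof is correct and is exactly the paper's argument: the paper's entire proof is the one-line identity $a=bc=b\circ\lambda^{-1}_b(c)$, which you spell out, correctly handling the point that $\lambda_b^{-1}=\lambda_{\overline b}$ with $\overline b\in B$ so the stability hypothesis applies.
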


\begin{proof}
  The claim follows from the equality $a=bc=b\circ\lambda^{-1}_b(c)$. 
\end{proof}

\begin{example}
  Let $A$ be a classical brace (or more generally, a skew brace with nilpotent
  additive group). Assume that the group $\Add{A}$ decomposes as $A_1\cdots A_k$, where
  the $A_j$ are the Sylow subgroups of $\Add{A}$. 
  Let $I\subseteq\{1,\dots,k\}$, $B=\prod_{i\in I}A_i$ and $C=\prod_{i\not\in I}A_i$. 
  Then $\Mul{A}$ admits an exact factorization through $B$ and $C$ by 
  Corollary~\ref{cor:Sylow_are_braces} and Proposition~\ref{pro:bracefactorization}.
\end{example}

\begin{thm} 
  \label{thm:factorization} 
  Let $A$ be a group that admits an exact
  factorization through two subgroups $B$ and $C$. Then $A$ with \[ a\circ
  a'=ba'c,\quad a=bc\in BC,\,a'\in A, \] is a skew brace with multiplicative
  group isomorphic to $B\times C$ and additive group isomorphic to $A$.
\end{thm}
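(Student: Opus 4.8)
The plan is to verify directly that $(A,\circ)$ is a group isomorphic to $B\times C$, that $(A,\cdot)=A$ is (tautologically) the additive group, and that the compatibility condition~\eqref{eq:compatibility} holds. Everything reduces to short manipulations in $A$ once one records the basic book-keeping of how $\circ$ interacts with the exact factorization. Since $A=BC$ is exact, every $a\in A$ has unique coordinates $a=b(a)c(a)$ with $b(a)\in B$, $c(a)\in C$, so $a\circ a'=b(a)\,a'\,c(a)$ is plainly well defined.

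The key observation is that if $a=bc$ and $a'=b'c'$ then
\[
a\circ a'=b(b'c')c=(bb')(c'c),
\]
with $bb'\in B$ and $c'c\in C$; hence $b(a\circ a')=b(a)b(a')$ and $c(a\circ a')=c(a')c(a)$. From this one reads off at once that $1$ is a two-sided $\circ$-identity, that $\circ$ is associative, and that $\overline{a}=b(a)^{-1}c(a)^{-1}$ is the $\circ$-inverse of $a$, so $(A,\circ)$ is a group. Moreover the map $a\mapsto(b(a),c(a)^{-1})$ is then a group isomorphism $(A,\circ)\to B\times C$. A more conceptual way to package the same point: $B\times C$ acts on the underlying set of $A$ by $(b,c)\cdot x=bxc^{-1}$, this action is simply transitive because its orbit map $(b,c)\mapsto bc^{-1}$ is a bijection $B\times C\to BC^{-1}=A$ exactly when $B\cap C=1$, and transporting the group structure of $B\times C$ along the orbit map yields precisely $\circ$; this simultaneously gives the group axioms and the isomorphism $\Mul{A}\simeq B\times C$.

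It remains to check~\eqref{eq:compatibility}, which is a one-line computation: writing $a=bc$, and recalling $a^{-1}=c^{-1}b^{-1}$ is the inverse in $\Add{A}=A$,
\[
(a\circ a')a^{-1}(a\circ a'')=(ba'c)(c^{-1}b^{-1})(ba''c)=ba'a''c=a\circ(a'a'').
\]
Together with the hypothesis that $(A,\cdot)$ is a group, this shows $(A,\cdot,\circ)$ is a skew brace with additive group $A$ and multiplicative group $B\times C$.

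I do not expect a genuine obstacle: the statement is essentially a direct verification of the same flavour as the constructions earlier in this section. The one point requiring a little care is the identification $\Mul{A}\simeq B\times C$ rather than some twisted product — the naive assignment $bc\mapsto(b,c)$ is \emph{not} a homomorphism, since $\circ$ multiplies the $C$-coordinates in reversed order, so one must compose with inversion on $C$ (equivalently use $bc\mapsto(b,c^{-1})$) or argue via the simply transitive action above.
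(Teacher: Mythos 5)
Your proof is correct and follows essentially the same route as the paper: the paper also transports the group structure of $B\times C$ to $A$ along the bijection $\eta(b,c)=bc^{-1}$ (your ``simply transitive action'' packaging is exactly this map, and your note that the naive $bc\mapsto(b,c)$ fails to be a homomorphism is the reason for the inverse on the $C$-coordinate), and then verifies the compatibility condition by the identical one-line computation.
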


\begin{proof} The map $\eta\colon B\times C\to A$, $\eta(b,c)=bc^{-1}$, is
  bijective.  Since $\eta$ is bijective and $a\circ
  a'=\eta(\eta^{-1}(a)\eta^{-1}(a'))$, it follows that $(A,\circ)$ is a group
  isomorphic to the direct product $B\times C$. To prove that $A$ is a skew
  brace it remains to show~\eqref{eq:compatibility}. Let $a=bc\in BC$ and
  $a',a''\in A$. Then \begin{align*} (a\circ a')a^{-1}(a\circ a'')
    &=(ba'c)a^{-1}(ba''c)\\ &=ba'c(c^{-1}b^{-1})ba''c\\ &=ba'a''c\\ &=a\circ
    (a'a'').  \end{align*} This completes the proof.  \end{proof}

\begin{exa}[QR decomposition] 
	\label{exa:QR}
	Let $n\in\N$.  The group $\GL(n,\C)$ admits an
  exact factorization as through the subgroups $U(n)$ and $T(n)$, where $U(n)$
  is the unitary group and $T(n)$ is the group of upper triangular matrices
  with positive diagonal entries.  Therefore there exists a skew brace $A$ with
  $\Add{A}\simeq\GL(n,\C)$ and $\Mul{A}\simeq U(n)\times T(n)$.  \end{exa}

\begin{exa} 
	\label{exa:a5a4c5}The alternating simple group $\Alt_5$ admits an exact factorization
  through the subgroups \[ A=\langle (123),(12)(34)\rangle\simeq\Alt_4,\quad
  B=\langle(12345)\rangle\simeq C_5.  \] By Theorem~\ref{thm:factorization},
  there exists a skew brace with additive group $\Alt_5$ and multiplicative
  group $\Alt_4\times C_5$. Compare 
  with~\cite[Corollary 1.1(i)]{MR3425626}.
\end{exa}

\begin{exa} 
	\label{exa:PSL27S4C7}
  The simple group $\PSL(2,7)$ admits an exact factorization through
  the subgroups $A\simeq\Sym_4$ and $B\simeq C_7$. By
  Theorem~\ref{thm:factorization}, there exists a skew brace with additive
  group $\PSL(2,7)$ and multiplicative group $\Sym_4\times C_7$.  Compare 
  with~\cite[Corollary 1.1(ii)]{MR3425626}.
\end{exa}

%\begin{exa} The symmetric group $\Sym_9$ admits an exact factorization through
%the subgroups \begin{align*} &A=\langle
%(3864975),(123)(475)(698)\rangle\simeq\PSL(2,8),\\ &B=\langle
%(12345),(678),(12),(67)\rangle\simeq\Sym_3\times\Sym_5.  \end{align*} By
%Theorem~\ref{thm:factorization}, there exists a skew brace with additive group
%$\Sym_9$ and multiplicative group $\Sym_3\times\Sym_5$.  \end{exa}

Recall from~\cite{MR1321145} that a pair $(A,B)$ of groups is said to be
\emph{matched} if there are two actions \[
B\xleftarrow{\leftharpoonup}B\times A\xrightarrow{\rightharpoonup}A \] such
that \begin{align} \label{eq:matched1}&b\rightharpoonup (aa')=(b\rightharpoonup
  a)\left( (b\leftharpoonup a)\rightharpoonup a' \right),\\
  \label{eq:matched2}&(bb')\leftharpoonup a=(b\leftharpoonup (b'\rightharpoonup
  a))(b'\leftharpoonup a) \end{align} for all $a,a'\in A$ and $b,b'\in B$.  If
the quadruple $(A,B,\rightharpoonup,\leftharpoonup)$ form a matched pair of groups,
then $A\times B$ is a group with multiplication \begin{align*}
  (a,b)(a',b')=\left(a(b\rightharpoonup a'),(b\leftharpoonup a')b'\right),
\end{align*} where $a,a'\in A$ and $b,b'\in B$. The inverse of $(a,b)$ is \[
(a,b)^{-1}=(b^{-1}\rightharpoonup a^{-1},(b\leftharpoonup
(b^{-1}\rightharpoonup a^{-1}))^{-1}).  \] This group will be denoted by
$A\bowtie B$ and it is known as the \emph{biproduct} or the \emph{Zappa--Sz\'ep
product} of $A$ and $B$.

%\begin{rem} The biproduct $A\bowtie B$ factorizes through the subgroups
%$A\bowtie1$ and $1\bowtie B$ as $(a,b)=(a,1)(1,b)$ for all $a\in A$ and $b\in
%B$.  \end{rem}

\begin{cor} \label{cor:biproduct} Let $A$ and $B$ be a matched pair of groups.
  Then the biproduct $A\bowtie B$ is a skew brace with \[
  (a,b)(a',b')=(a(b\rightharpoonup a'),(b\leftharpoonup a')b'),\quad (a,b)\circ
(a',b')=(aa',b'b), \] where $a,a'\in A$ and $b,b'\in B$.  \end{cor}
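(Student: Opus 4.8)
The plan is to deduce Corollary~\ref{cor:biproduct} directly from Theorem~\ref{thm:factorization}. The key observation is that the Zappa--Sz\'ep product $A\bowtie B$ factors exactly through two obvious subgroups: the image of $A$, namely $\widehat A=\{(a,1):a\in A\}$, and the image of $B$, namely $\widehat B=\{(1,b):b\in B\}$. First I would check that $\widehat A$ and $\widehat B$ are indeed subgroups of $A\bowtie B$: using the given multiplication $(a,b)(a',b')=(a(b\rightharpoonup a'),(b\leftharpoonup a')b')$ together with the fact that $1\rightharpoonup a'=a'$ and $1\leftharpoonup a'=1$ (these hold because $\rightharpoonup,\leftharpoonup$ are actions, so the identity of $B$ acts trivially on $A$, and dually), one sees $(a,1)(a',1)=(aa',1)$ and $(1,b)(1,b')=(1,bb')$, and similarly for inverses. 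Thus $\widehat A\simeq A$ and $\widehat B\simeq B$ as groups.

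Next I would verify the factorization is exact and surjective. Exactness $\widehat A\cap\widehat B=1$ is immediate since a pair lying in both has first coordinate $1$ and second coordinate $1$. For surjectivity, given $(a,b)\in A\bowtie B$ we simply write $(a,b)=(a,1)(1,b)$; indeed $(a,1)(1,b)=(a(1\rightharpoonup 1),(1\leftharpoonup 1)b)=(a,b)$. Hence $A\bowtie B=\widehat A\widehat B$ is an exact factorization. Applying Theorem~\ref{thm:factorization} with the roles of $B$ and $C$ played by $\widehat A$ and $\widehat B$, we obtain a skew brace structure on $A\bowtie B$ whose additive group is $A\bowtie B$ itself and whose multiplicative group is $\widehat A\times\widehat B\simeq A\times B$, with circle operation $(a,b)\circ(a',b')=\bigl((a,1)\cdot(a',1)\bigr)\cdot\bigl((1,b')\cdot(1,b)\bigr)$, where the first-coordinate factor comes from the $\widehat A$-parts and the second from the $\widehat B$-parts (note the order reversal on the $B$-component, which is exactly what Theorem~\ref{thm:factorization} produces via $\eta(b,c)=bc^{-1}$ and the resulting rule $a\circ a'=ba'c$ when $a=bc$).

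Finally I would unwind this formula to match the stated expression. Writing $a=(a,b)=(a,1)(1,b)$ so that the ``$B$-part'' of the factorization of $a$ is $(a,1)$ and the ``$C$-part'' is $(1,b)$, and similarly $a'=(a',b')=(a',1)(1,b')$, the recipe $a\circ a'=b\,a'\,c$ from Theorem~\ref{thm:factorization} gives $(a,b)\circ(a',b')=(a,1)(a',b')(1,b)=(aa',b')(1,b)=(aa'(b'\rightharpoonup 1),(b'\leftharpoonup 1)b)=(aa',b'b)$, using again that anything acts trivially on the identity and the identity of either group acts trivially. This is precisely the claimed $\circ$. The main obstacle, such as it is, is purely bookkeeping: one must be careful that Theorem~\ref{thm:factorization} takes the factorization in the order $A=BC$ and that the induced multiplicative group is $B\times C$ via $\eta(b,c)=bc^{-1}$, so the second component of $\circ$ comes out as $b'b$ rather than $bb'$; getting this orientation right is the only place an error could creep in, and there is no genuine difficulty once the factorization $A\bowtie B=\widehat A\widehat B$ is identified.
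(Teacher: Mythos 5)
Your proof is correct and is exactly the paper's argument: the paper's entire proof is the observation that $A\bowtie B$ factorizes exactly through $A\bowtie 1$ and $1\bowtie B$, followed by an appeal to Theorem~\ref{thm:factorization}; you simply spell out the routine verifications (subgroup checks, exactness, and the unwinding of $a\circ a'=ba'c$ to get $(aa',b'b)$), all of which check out. The only cosmetic point is that $1_B\leftharpoonup a=1_B$ and $b\rightharpoonup 1_A=1_A$ are consequences of the compatibility conditions \eqref{eq:matched1}--\eqref{eq:matched2} rather than of the bare action axioms, but they do hold, so nothing is missing.
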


\begin{proof} It follows from Theorem~\ref{thm:factorization} since the
  biproduct $A\bowtie B$ admits an exact factorization through the subgroups
  $A\bowtie1\simeq A$ and $1\bowtie B\simeq B$.  \end{proof}

Theorem~\ref{thm:factorization} is useful to construct skew braces associated
with rings.

\begin{pro} \label{pro:agata} Let $R$ be a ring (associative, noncommutative),
  let $S$ be a subring of $R$ and let $I$ be a left ideal in $R$ such that
  $S\cap I=0$ and $R=S+I$.  Assume that $S$ and $I$ are Jacobson radical rings
  (for example nilpotent rings).  Then $R$ with the operation \[ a\circ
  b=a+b+ab \] is a group and $R=S\circ I$ is an exact factorization.  \end{pro}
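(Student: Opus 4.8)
The plan is to verify directly that the given operation makes $\Add{R}$ into a group and that $R = S \circ I$ is an exact factorization, and then invoke Theorem~\ref{thm:factorization} (though note the Proposition as stated only asks for the group and factorization claims). First I would recall that if $R$ is a Jacobson radical ring, then the \emph{circle operation} $a \circ b = a + b + ab$ makes $R$ into a group: associativity is the identity $a \circ (b \circ c) = (a\circ b)\circ c = a+b+c+ab+ac+bc+abc$, the neutral element is $0$, and the inverse of $a$ is the unique $a'$ with $a + a' + aa' = 0$, which exists precisely because $a$ is quasi-regular (this is the definition of the Jacobson radical, or follows from nilpotency when $R$ is nilpotent). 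So $\Add{R} := (R, \circ)$ is a group, and the same applies to $(S, \circ)$ and $(I, \circ)$ as subgroups, since $S$ is a subring and $I$ is a subring (being a left ideal closed under the relevant operations), and both are Jacobson radical by hypothesis.

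Next I would establish that every element of $R$ factors as $s \circ x$ with $s \in S$, $x \in I$. Given $r \in R$, write $r = s + y$ with $s \in S$, $y \in I$ using $R = S + I$; the difficulty is that $s + y \ne s \circ y$ in general. Instead, I would look for $x \in I$ with $s \circ x = r$, i.e. $s + x + sx = s + y$, i.e. $x + sx = y$, i.e. $(1 + s)\cdot x = y$ in the formal sense — more precisely $x = y - sx$. Here the key point is that left multiplication by $s$ maps $I$ into $I$ (since $I$ is a left ideal), so the map $x \mapsto y - sx$ is a self-map of $I$; since $s$ lies in a Jacobson radical ring, the operator $x \mapsto sx$ on $I$ is "topologically nilpotent" enough that $1 + L_s$ is invertible on $I$. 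Concretely: the equation $x = y - sx$ has the unique solution $x = (s' ) \circ$-type expression; cleanly, set $x := s' \circ y' $ where... — rather, I would argue that $x \mapsto y - sx$ has a fixed point because $L_s$ is left quasi-regular, or in the nilpotent case simply iterate: $x = y - sy + s^2 y - \cdots$, a finite sum since $R$ is nilpotent (in the general radical case one needs the quasi-inverse of $-s$ acting appropriately). This gives existence of the factorization $r = s \circ x$.

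For exactness I would show $S \cap I = 1$ as subgroups of $(R,\circ)$; but the neutral element is $0$ and $S \cap I = 0$ by hypothesis, so this is immediate. For uniqueness of the factorization (which is equivalent to exactness once we know both are subgroups and the product is all of $R$), suppose $s_1 \circ x_1 = s_2 \circ x_2$ with $s_i \in S$, $x_i \in I$. Then $\overline{s_2}^{\,\circ} \circ s_1 = x_2 \circ \overline{x_1}^{\,\circ}$ lies in $S \cap I = 0$, forcing $s_1 = s_2$ and $x_1 = x_2$. Hence $R = S \circ I$ is an exact factorization of the group $(R,\circ)$.

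The main obstacle is the existence half of the factorization: showing that for arbitrary $r \in R$ the equation $x + sx = y$ is solvable for $x \in I$. In the nilpotent case this is a routine finite geometric-series argument, but for a general Jacobson radical ring one must argue more carefully that $1 + L_s$ is a bijection of $I$; the cleanest route is to use that $-s$ has a quasi-inverse $t \in S$ with $-s + t + (-s)t = 0$, i.e. $t = s - st = s + (-s)t$, and then check that $x := y + t\cdot y'$-style substitution — more precisely, verify that $x$ defined via the quasi-inverse satisfies $x + sx = y$ by a direct manipulation using $t \circ (-s) = 0$. Once existence is in hand, everything else is bookkeeping, and the final sentence of the proof would simply record that $R$ with $\circ$ is thus a skew brace with additive group $(R,\circ)$ and multiplicative group $\cong (S,\circ) \times (I,\circ)$ by Theorem~\ref{thm:factorization} applied to the exact factorization $R = S \circ I$.
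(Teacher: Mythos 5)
Your overall strategy is the same as the paper's: prove associativity, use the hypothesis only to make $(S,\circ)$ and $(I,\circ)$ groups, and then produce the factorization by solving $s\circ x=r$ for $x\in I$ via quasi-inverses in $S$, using that $I$ is a left ideal. However, as written there are two concrete defects. First, your opening claim that $(R,\circ)$ is a group is unjustified: $R$ itself is \emph{not} assumed to be a Jacobson radical ring (only $S$ and $I$ are), and ``every element of $R$ is quasi-regular'' is essentially the statement to be proven. The correct order of deduction, which the paper follows, is: associativity and the neutral element $0$ hold in any ring; $(S,\circ)$ and $(I,\circ)$ are groups by hypothesis; every $r\in R$ factors as $r=s\circ x$ with $s\in S$, $x\in I$; hence $r$ has the $\circ$-inverse $\overline{x}\circ\overline{s}$ and $(R,\circ)$ is a group. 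You have all these pieces but never draw this conclusion, so your first paragraph is circular rather than a proof.

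Second, your treatment of the decisive step in the general (non-nilpotent) radical case is left incomplete and the element you invoke is the wrong one. To solve $x+sx=y$ with $x\in I$ you need the $\circ$-inverse $\overline{s}$ of $s$ itself, i.e.\ $t$ with $s+t+st=0$ (so that $(1+s)(1+t)=1$); then $x=y+\overline{s}y\in I$ works, since $x+sx=y+(\,\overline{s}+s+s\overline{s}\,)y=y$. The quasi-inverse of $-s$, which you propose, inverts $1-s$ rather than $1+s$, so the substitution you sketch would give $x-sx=y$ and the verification would fail (your intermediate algebra ``$t=s-st$'' is also off). The paper phrases the same idea more cleanly and without any fixed-point or series argument: writing $r=i+s$, one has $r=s\circ(\overline{s}\circ r)$ and $\overline{s}\circ r=\overline{s}\circ(i+s)=i+\overline{s}\,i\in I$, using $\overline{s}\circ s=0$ and that $I$ is a left ideal. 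With the sign corrected and the group property of $(R,\circ)$ deduced from the factorization rather than assumed, your argument becomes the paper's proof.
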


\begin{proof} It is easy to prove that $\circ $ is associative. Moreover, since
  $S$ and $I$ are Jacobson radical rings, it follows that $(S, \circ )$ and
  $(I, \circ )$ are groups. 
	
	We claim that each $r\in R$ can be written as $r=a\circ b$ for some
	$a\in S$ and $b\in I$. Since $R=I+S$, one writes $r=i+s$ for some $s\in
	S$ and $i\in I$. Now let $\overline{s}$ be the inverse of $s$ in the
	group $(S, \circ)$.  Then \[ r=s\circ (\overline{s}\circ r) \] with
	$s\in S$ and $\overline{s}\circ r=\overline{s}\circ
	(i+s)=i+\overline{s}i\in I$.  Since $(S,\circ)$ and $(I, \circ )$ are
	groups and $R=S\circ I$, it follows that $(R, \circ)$ is a group.  The
	factorization $R=S\circ I$ is exact since $I\cap S=0$.  \end{proof}

%\begin{example} \label{lean} Let $R$ be a ring (associative and
%noncommutative), let $S$ be a subring of $R$ and let $I$ be a left ideal in
%$R$ such that $S\cap I=0$ and $R=S+I$.  Suppose that $S$ and $I$ are Jacobson
%radical rings (for example nilpotent rings).  Let the operation $\circ $ be
%defined as $a\circ b=a+b+ab$ for $a,b\in R$. Then $\circ $ is associative, and
%since $S$ and $I$ are Jacobson radical rings $(S, \circ )$ and $(I, \circ )$
%are groups. 
%	
%	We claim that each $r\in R$ can be written as $r=a\circ b$ for some
%	$a\in S$ and $b\in I$. Since $R=I+S$, one writes $r=i+s$ for some $s\in
%	S$ and $i\in I$. Now let $\overline{s}$ be the inverse of $s$ in the
%	group $(S, \circ)$.  Then \[ r=s\circ (\overline{s}\circ r) \] with
%	$s\in S$ and $\overline{s}\circ r=\overline{s}\circ
%	(i+s)=i+\overline{s}i\in I$.  Since $(S,\circ)$ and $(I, \circ )$ are
%	groups and $R=S\circ I$, it follows that $(R, \circ)$ is a group.  The
%	factorization $R=S\circ I$ is exact since $I\cap S=0$.
%\end{example}

Particular cases of Proposition~\ref{pro:agata} can be easily obtained as
factors of free algebras or as factors of differential polynomial rings. 

\begin{example} \label{exa:agata} Let $F$ be a field and let $P=F\langle x_{1}, \dots
  , x_{n}\rangle$ be the noncommutative (associative) polynomial ring in $n$
  noncommuting variables, and let $A$ be the subalgebra of $P$ consisting of
  polynomials which have zero constant term. Let $V$ be the linear space over
  $F$ spanned by $x_{1}, \dots , x_{n}$  and let $V_{1}$ and $V_{2}$ be  linear
  subspaces of $A$ such that   $V=V_{1}\oplus V_{2}$.  Let $Q\subseteq A$ be an
  ideal in $A$ such that $A^{m}\subseteq Q$ for some $m$, and denote $J=QA$
  (note that $J$ is an ideal in $P$).  Let $R=A/J$. Then \[ S=\{ a+J:a\in
  PV_{1}\}\subseteq R, \quad I= \{a+J:a\in PV_{2}\}\subseteq R, \] satisfy the
  assumptions of Proposition~\ref{pro:agata} and hence $(R, \circ)$ admits an
  exact factorization $R=S\circ I$.  \end{example}

\begin{example} Let $N$ be a nilpotent ring and $M$ be a left $N$-module. Let
  $R$ be the ring of matrices \[ \begin{pmatrix} N & M\\ 0 & 0 \end{pmatrix}
    =\left\{\begin{pmatrix} n & m\\ 0 & 0 \end{pmatrix}:n\in N,\,m\in M \right\},
  \] and \[ S=\begin{pmatrix} N & 0\\ 0 & 0 \end{pmatrix}\subseteq R, \quad
  I=\begin{pmatrix} 0 & M\\ 0 & 0 \end{pmatrix}\subseteq R.  \] Then $R$, $I$ and
  $S$ satisfy the assumptions of Proposition~\ref{pro:agata} and the group $(R,
  \circ )$ admits an exact factorization as $R=S\circ I$.  \end{example}

%\begin{example} Let $F$ be a finite field and let $A$ be a nilpotent
%$F$-algebra such that $A=N+F$ where $N$ is a nilpotent subalgebra of $A$. Let
%$D$ be a nilpotent derivation on $D$ such that $D(f)=0$ for $f\in F$, and let
%$A[x; D]$ be the differential polynomial ring, so $rx=xr+D(r)$ for all $r\in
%R$. Let $I$ be the ideal of $A[x; D]$ generated by $x^{t}$ where $t$ is larger
%than the nilpotency index of the derivation $D$.   Then $R=A[x; D]/I$,
%$S=F[x]/I\cap F[x]$ and $I=N[x; D]/I\cap N[x; D]$ satisfy assumptions of
%Example \ref{lean} and hence yield an exactly factorisable group $(R, \circ)$,
%where $F[x]$ denotes the polynomial ring over $F$.  \end{example}

\begin{rem} Exactly factorizable groups give rise to a special class of Hopf
  algebras, see for example~\cite{MR1382029,MR1783929,MR0229061,MR611561}.  \end{rem} 

%\begin{remark} Some interesting examples of factorisation of simple groups are
%mentioned in the %introduction of \cite{4}.  \end{remark}

\subsection{Triply factorized groups}

In~\cite{MR705793} Sysak observed an interesting connection between radical
rings and triply factorized groups. This idea shows that skew braces produce
triply factorized groups.

Recall that a \emph{triply factorized group} is tuple $(G,A,B,M)$, where $G$ is
a group with subgroups $A$, $B$ and $M$ and such that $G=AM=BM=AB$ and $A\cap
M=B\cap M=1$. 

\begin{thm} 
	\label{thm:brace2T} 
	Let $X$ be a skew brace. Let $G=\Gamma(X)$, 
	$A=\Add{X}\times1$,
	$M=1\times\Mul{X}$ and $B=\{(x,x):x\in X\}$. Then $(G,A,B,M)$ is a triply
	factorized group such that $A\cap B=1$.  
\end{thm}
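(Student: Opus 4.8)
The plan is to verify directly that $G=\Gamma(X)=(X,\cdot)\rtimes(X,\circ)$, with the multiplication $(a,x)(b,y)=(a\lambda_x(b),x\circ y)$ from Definition~\ref{def:structure}, satisfies $G=AM=BM=AB$ together with $A\cap M=B\cap M=1$ and $A\cap B=1$, for the three prescribed subgroups $A=\Add{X}\times1$, $M=1\times\Mul{X}$, $B=\{(x,x):x\in X\}$.

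First I would check that $A$, $B$, $M$ are genuinely subgroups of $G$. For $A$ and $M$ this is immediate from the semidirect product structure: $A=\Add{X}\times 1$ is the normal factor, $M=1\times\Mul{X}$ is the complement, and their intersection is the identity $(1,1)$ (recall $1$ is the common neutral element by Remark~\ref{rem:0=1}). For $B$, I need to show that the ``diagonal'' $\{(x,x):x\in X\}$ is closed under the product; this is the one computation with content. Using the group law, $(a,a)(b,b)=(a\lambda_a(b),a\circ b)$, and since $a\circ b=a\lambda_a(b)$ (the defining relation between $\cdot$ and $\circ$ coming from $\lambda_a(b)=a^{-1}(a\circ b)$, cf.\ Lemma~\ref{lem:lambda}), the first coordinate equals $a\circ b$, so $(a,a)(b,b)=(a\circ b,a\circ b)\in B$. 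The identity lies in $B$, and inverses: $(a,a)^{-1}$ must be the unique $(b,b)$ with $a\circ b = a\lambda_a(b)=1$, i.e.\ $b=\overline a$, so $B$ is a subgroup isomorphic to $\Mul{X}$.

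Next I would establish the three factorizations. For $A\cap M=1$, $B\cap M=1$ and $A\cap B=1$ the argument is purely set-theoretic: $A\cap M$ consists of pairs that are both of the form $(a,1)$ and $(1,y)$, forcing $a=y=1$; similarly $(x,x)=(1,y)$ forces $x=1$, and $(x,x)=(a,1)$ forces $x=1$. For $G=AM$: any $(a,y)\in G$ factors as $(a,1)(1,y)$, which is the standard semidirect-product decomposition. For $G=BM$: I want to write $(a,y)=(x,x)(1,z)=(x,x\circ z)$, which forces $x=a$ and then $z=\overline a\circ y$; since this $z$ exists and is unique in $\Mul{X}$, we get $G=BM$ with uniqueness (so the factorization is exact). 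For $G=AB$: I want $(a,y)=(c,1)(x,x)=(c\lambda_1(x),x)=(cx,x)$ using $\lambda_1=\id$; this forces $x=y$ and $c=a x^{-1}=ay^{-1}$, again existing and unique. So all three factorizations hold.

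The main obstacle—really the only place requiring care rather than bookkeeping—is the closure of $B$ under multiplication, since it rests on correctly identifying the first coordinate $a\lambda_a(b)$ with $a\circ b$ and thus seeing the diagonal as a subgroup isomorphic to $\Mul{X}$; everything else is a routine check that the semidirect product $\Gamma(X)$ splits in the two additional ways dictated by the skew-brace axioms. I would present the closure-of-$B$ computation and the $BM$, $AB$ factorizations explicitly and leave the $AM$ factorization and the intersection triviality statements as one-line remarks.
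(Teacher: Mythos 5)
Your proof is correct and follows essentially the same route as the paper: closure of the diagonal $B$ via the identity $a\lambda_a(b)=a\circ b$, and the explicit factorizations $(x,y)=(x,x)(1,\overline{x}\circ y)$ and $(x,y)=(xy^{-1},1)(y,y)$. The only cosmetic difference is that the paper checks $B\leq G$ with a single computation of $(x,x)(y,y)^{-1}$, while you verify closure and inverses separately.
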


  \begin{proof} 
	  Clearly $G=AM$ and $A\cap M=A\cap B=B\cap M=1$.  Let us prove that $B$ is
	  a subgroup of $G$. Clearly $B$ is nonempty. For $x,y\in X$, using that
	  $\overline{y}=\lambda^{-1}_y(y^{-1})$, one obtains  
	  \[
	  (x,x)(y,y)^{-1}=(x,x)(\overline{y},\overline{y})=(x\circ
	  \overline{y},x\circ\overline{y})\in B.  
	  \] 
	  To prove that $G=BM$ notice that $(x,y)= (x,x)(1,\overline{x}\circ y)\in
	  BM$.  Similarly $(x,y)=(xy^{-1},1)(y,y) \in AB$, proves that $G=AB$.  
  \end{proof}

%%    for $x,y\in X$ one has %\begin{align*} 
%		$(x,y)=
%		%(x,x\circ\overline{x}\circ
%      %y)=(x,x\lambda_x(\overline{x}\circ y))=
%	  (x,x)(1,\overline{x}\circ y)\in
%	  BM$.  
%  %\end{align*} 
%	  Similarly $(x,y)=(xy^{-1},1)(y,y)
%	  %, the formula \[
%	  %(x,y)=(x\circ\overline{y},1)(y,y)
%	%(xy^{-1},1)\left(\lambda^{-1}_{xy^{-1}}(y),\lambda^{-1}_{xy^{-1}}(y)\right)
%	\in AB$, proves that $G=AB$.  \end{proof}

%\begin{pro} Let $X$ be a skew brace and $(G,A,B,M)$ be its corresponding
%triple factorizable group. If there exists $x\in X$ such that
%$\lambda_x\ne\id$, then $Z(G)=1$.  \end{pro}
%
%\begin{proof} Let $x\in X$ with $\lambda_x\ne\id$.  Since $A=\{(x,1):x\in X\}$
%is a normal subgroup of $\Gamma$, one obtains that for every $y\in X$, \[
%[(x,1),(1,y)]=(x,1)^{-1}(1,y)^{-1}(x,1)(1,y)=(1,\lambda^{-1}_x(y^{-1})y)\in
%A\cap B=1.  \] This implies that $\lambda_x=\id$, a contradiction.
%\end{proof}

\begin{example} \label{exa:triply} Let $R$ be a  nilpotent ring  (associative,
  noncommutative), let $S$ be a subring of $R$ and let $I_{1}$ and $I_{2}$ be
  left ideals of $R$ such that \[ S\cap I_{1}=S\cap I_{2}=I_{1}\cap I_{2}=0,
    \quad R=S+I_{1}=S+I_{2}=I_{1}+I_{2}.  \] Proposition~\ref{pro:agata} with
    $A=S$, $B=I_1$ and $M=I_2$ implies that $(R, \circ )$ is a triply group
    factorized group: \[ R=A\circ B=A\circ M= B\circ M,\quad A\cap B=0.  \]
  \end{example}

Let us show a particular case of Example~\ref{exa:triply}.

\begin{example} Recall the notation from Example~\ref{exa:agata}. Let $n=2m$
  for some $m\in\N$ and let \begin{align*} V_{1}=\sum_{i=1}^{m} Fx_{i}, &&
    V_{2}=\sum _{i=m+1}^{2m}Fx_{i}, && V_{3}=\sum_{i=1}^{m} F(x_{i}+x_{m+i}).
  \end{align*} Now let 	\begin{align*} A=\{ a+J:a\in PV_{1}\}, && B=\{ a+J:a\in
      PV_{2}\}, && M=\{ a+J:a\in PV_{3}\}.  \end{align*}
    Proposition~\ref{pro:agata} implies that $(R, \circ )$ is a triply
    factorized group: \[ R=A\circ B=A\circ M= B\circ M,\quad A\cap B=0.  \]
  \end{example}

\begin{rem}
	Let $A$ be a skew brace. The multiplicative group $\Mul{A}$ with actions
	$x\rightharpoonup y=\lambda_x(y)$ and $x\leftharpoonup y=\mu_y(x)$ form a
	matched pair of groups, see Lemma~\ref{lem:brace2matched}. The biproduct $\Mul{A}\bowtie\Mul{A}$ has 
	multiplication
	\[
	(x,y)(x',y')=(x\circ \lambda_y(x'),\mu_{x'}(y)\circ y')
	\]
	and it is a triply factorizable group with $A=\Mul{A}\times 1$, $M=1\times
	\Mul{A}$ and $\Delta=\{(x,\overline{x}):x\in A\}$. The multiplication on
	$\Delta$ is given by
	\begin{align*}
		(a,\overline{a})(b,\overline{b})&=(a\circ\lambda_{\overline{a}}(b),\mu_b(\overline{a})\circ\overline{b})=(ab,\overline{ab})
	\end{align*}
	since $ab=a\circ\lambda_{\overline{a}}(b)$ and
	$a\circ\lambda_{\overline{a}}(b)\circ\mu_{b}(\overline{a})\circ\overline{b}=1$.
	There is a left action of $\Mul{A}$ on $\Delta$ given by 
	\[
	a\cdot (b,\overline{b})=(1,a)(b,\overline{b})(1,a)^{-1}=(\lambda_a(b),\overline{\lambda_a(b)})
	\]
	and the map $\Delta\rtimes\Mul{A}\to \Mul{A}\bowtie\Mul{A}$ given by
	$((a,\overline{a}),b)\mapsto (a,\overline{a}\circ b)$ is a group
	isomorphism.
\end{rem}

\begin{lem} \label{lem:inverse_cocycle} Let $(G,A,B,M)$ be a triply factorized
  group with $M$ normal in $G$ and $A\cap B=1$.  For each $m\in M$ there exists a unique
  $\gamma(m)\in A$ such that $m\gamma(m)\in B$.  Moreover, the map
  $m\mapsto\gamma(m)$ is bijective.  \end{lem}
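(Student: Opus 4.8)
The plan is to read $\gamma$ off the exact factorization $G=AB$ and then check bijectivity against the other two factorizations. First I would note that $A\cap B=1$ together with $G=AB$ (hence also $G=BA$, by inverting) gives that every $g\in G$ has a \emph{unique} expression $g=ba$ with $b\in B$, $a\in A$: if $b_1a_1=b_2a_2$ then $b_2^{-1}b_1=a_2a_1^{-1}\in A\cap B=1$. Applying this to $m\in M$ and writing $m=ba$, I get $ma^{-1}=b\in B$, so $\gamma(m):=a^{-1}$ works; and if $a'\in A$ also satisfies $ma'^{-1}\in B$, then $m=(ma'^{-1})a'$ is a $B\cdot A$ expression of $m$, so $a'=a$ by uniqueness. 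This proves the first assertion.

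For bijectivity of $m\mapsto\gamma(m)\colon M\to A$, injectivity uses $G=BM$ with $B\cap M=1$: if $\gamma(m_1)=\gamma(m_2)=a$, then $m_1a,m_2a\in B$, so $m_1m_2^{-1}=(m_1a)(m_2a)^{-1}\in B$; since also $m_1m_2^{-1}\in M$, it lies in $B\cap M=1$, whence $m_1=m_2$. For surjectivity, given $a\in A$ I would use $G=BM$ again to write $a^{-1}=b_0m_0$ with $b_0\in B$, $m_0\in M$; rearranging gives $m_0a=b_0^{-1}\in B$, so $\gamma(m_0)=a$.

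I do not expect a real obstacle: the lemma is a bookkeeping exercise with the three factorizations, and in fact neither the normality of $M$ nor the relation $A\cap M=1$ is needed for this statement (they will presumably be used afterwards, to identify $\gamma$ with the inverse of a $1$-cocycle on $G/M$). The only point requiring care is the order of multiplication, since none of $A$, $B$, $M$ is assumed abelian; in particular I would be careful to invoke $G=BA$ rather than $G=AB$ when producing the decomposition $m=ba$, and similarly to keep $b_0$ on the correct side in the surjectivity step.
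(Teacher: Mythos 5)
Your proof is correct. The existence and uniqueness part is exactly the paper's argument: pass from $G=AB$ to $G=BA$, write $m=ba$, and set $\gamma(m)=a^{-1}$, with uniqueness from $A\cap B=1$. Where you diverge is in establishing bijectivity. The paper constructs an explicit two-sided inverse $\pi\colon A\to M$ by writing $a=b_1m_1\in BM$ and setting $\pi(a)=b_1m_1^{-1}b_1^{-1}$; this is precisely where the normality of $M$ is used, since it is needed to guarantee $b_1m_1^{-1}b_1^{-1}\in M$, and it yields a closed formula for $\gamma^{-1}$ that is convenient in the proof of the subsequent theorem. You instead prove injectivity from $B\cap M=1$ (via $m_1m_2^{-1}=(m_1a)(m_2a)^{-1}$) and surjectivity by decomposing $a^{-1}=b_0m_0\in BM$, so that $m_0a=b_0^{-1}\in B$. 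Both routes are valid, and your observation that neither the normality of $M$ nor $A\cap M=1$ is actually needed for this particular lemma is correct and is a genuine (if minor) sharpening; the trade-off is that you do not obtain the explicit formula for the inverse map that the paper's argument provides.
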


\begin{proof} Since $G=AB=BA$ and $A\cap B=1$, for each $m\in M$ there is a unique
	$\gamma(m)\in A$ such that $m\gamma(m)\in B$, i.e. if $m=ba$, then $\gamma(m)=a^{-1}$. Similarly, $A\subseteq MB=BM$ and
  $M\cap B=1$ imply that for each $a\in A$ there is a unique $\pi(a)\in M$ such
  that $\pi(a)a\in B$, i.e. if $a=b_1m_1$, then $\pi(a)=b_1m_1^{-1}b_1^{-1}$. Now it follows $\pi(\gamma(m))=m$ for all $m\in M$ and
  that $\gamma(\pi(a))=a$ for all $a\in A$.  \end{proof}

The following result is~\cite[Proposition 21]{MR2799412} in the language of
skew braces:

\begin{thm}[Sysak] \label{thm:triple} Let $(G,A,B,M)$ be a triply factorized
  group such that $M$ is normal in $G$ and $A\cap B=1$. Then $M$ 
  with 
	  \[ m\circ m'=\gamma^{-1}(\gamma(m)\gamma(m')),  \] 
	  where $\gamma$ is the map of Lemma~\ref{lem:inverse_cocycle}, 
  is a skew brace such that $\Gamma(M)\simeq
  G$.  \end{thm}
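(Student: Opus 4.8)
The plan is to use the map $\gamma\colon M\to A$ from Lemma~\ref{lem:inverse_cocycle} to transport the group structure of $A$ onto $M$, and then identify $M$ with the additive group of a skew brace via Lemma~\ref{lem:Bachiller} (or directly via the bijective $1$-cocycle picture of Subsection~\ref{1cocycles}). First I would observe that, since $\gamma$ is a bijection onto $A$ and $A$ is a group, the operation $m\circ m'=\gamma^{-1}(\gamma(m)\gamma(m'))$ makes $(M,\circ)$ a group isomorphic to $A$, with the same neutral element as $(M,\cdot)$ because $\gamma(1)=1$ (if $m=1$ then $m\gamma(m)=\gamma(m)\in A\cap B=1$). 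So the only real content is the compatibility condition~\eqref{eq:compatibility} relating $\cdot$ and $\circ$ on $M$, together with the identification $\Gamma(M)\simeq G$.

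For the compatibility condition, the key step is to understand the map $\lambda_m=\gamma^{-1}(\gamma(m)\,\cdot\,)\,\cdot\,(\text{shift})$ more concretely; equivalently, I would show that $\gamma$ is a bijective $1$-cocycle for a suitable action of the group $(M,\circ)$ on the group $(M,\cdot)$, namely the action by which $(M,\circ)\simeq A$ acts on $M$ by conjugation inside $G$ (this makes sense because $M$ is normal in $G$). Concretely: for $a\in A$, conjugation $x\mapsto a x a^{-1}$ is an automorphism of $M$ (as $M\trianglelefteq G$), and pulling this back along $\gamma$ gives an action of $(M,\circ)$ on $(M,\cdot)$ by additive-group automorphisms. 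The cocycle identity $\gamma(m\circ m')=\gamma(m)\,(m\cdot\gamma(m'))$ — or rather its correct variant matching the conventions in Subsection~\ref{1cocycles} — should follow by a direct computation in $G$: writing $m\gamma(m)=b\in B$, $m'\gamma(m')=b'\in B$, and using that $bb'\in B$, one recovers $m\circ m'$ and its $\gamma$-value by unique factorization $G=AB$. Once $\gamma$ is exhibited as a bijective $1$-cocycle with values in $(M,\cdot)$, Theorem~\ref{thm:1cocycle} (the equivalence $\BrAD(M)\simeq\Bc(M)$) immediately yields that $(M,\cdot,\circ)$ is a skew brace whose $\lambda$-map is the conjugation action just described.

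Finally, for the isomorphism $\Gamma(M)\simeq G$: by Definition~\ref{def:structure}, $\Gamma(M)=(M,\cdot)\rtimes(M,\circ)$ with multiplication $(a,x)(b,y)=(a\lambda_x(b),x\circ y)$. Using the identifications $(M,\circ)\simeq A$ via $\gamma$ and the conjugation action of $A$ on $M$, the semidirect product $(M,\cdot)\rtimes A$ is just the internal product $MA$ inside $G$; since $G=AM=MA$ with $A\cap M=1$ and $M$ normal, one has $G=M\rtimes A$, and the map $(m,x)\mapsto m\,\gamma(x)$ (or its appropriate inverse-twisted form) is the desired isomorphism $\Gamma(M)\xrightarrow{\sim}G$. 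I would check this map is multiplicative by a short calculation unwinding the $\Gamma(M)$-multiplication against the cocycle identity for $\gamma$, and bijective by counting/unique factorization.

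The main obstacle I anticipate is bookkeeping with conventions: the $1$-cocycle identity in Subsection~\ref{1cocycles} is written $\pi(gh)=\pi(g)(g\cdot\pi(h))$ with the \emph{source} group acting, whereas here the natural action is of $A$ (the target) on $M$ (the source), so one must either invert $\gamma$ or pass to the ``opposite'' conventions carefully; getting this exactly right — so that the resulting $\circ$ genuinely matches $m\circ m'=\gamma^{-1}(\gamma(m)\gamma(m'))$ and the brace is not accidentally the ``opposite'' one — is where the care is needed. Everything else is routine manipulation in $G$ using the two exact factorizations $G=AB=BM=AM$ and normality of $M$.
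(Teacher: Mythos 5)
Your proposal is correct and follows essentially the same route as the paper: the heart of both arguments is the unique-factorization computation $(m\gamma(m))(m'\gamma(m'))=\bigl(m\,\gamma(m)m'\gamma(m)^{-1}\bigr)\bigl(\gamma(m)\gamma(m')\bigr)\in B$, which identifies $m\circ m'=m\,\gamma(m)m'\gamma(m)^{-1}$ (i.e.\ $\lambda_m$ is conjugation by $\gamma(m)$, using normality of $M$), and the isomorphism $\Gamma(M)\to G$ is the same map $(m,x)\mapsto m\gamma(x)$. The only cosmetic difference is that you invoke the bijective $1$-cocycle equivalence of Theorem~\ref{thm:1cocycle} where the paper just verifies~\eqref{eq:compatibility} directly in one line from the identity above.
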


  \begin{proof} 
	  For $m,m'\in M$ write $a=\gamma(m)$ and $a'=\gamma(m')$.
	  %we define
	  %\[ 
	  By
	  Lemma~\ref{lem:inverse_cocycle}, 
	  $m\circ m'=\gamma^{-1}(\gamma(m)\gamma(m'))$ 
	  defines a group structure over $M$
	  isomorphic to that of $A$.  Since $m(am'a^{-1})(aa')=(ma)(m'a')\in B$, it
	  follows that \[ m\circ m'=m(am'a^{-1}).  \]

	  Now $M$ is a skew brace since \begin{align*} (m\circ m')m^{-1}(m\circ
		  m'') &= m(am'a^{-1})m^{-1}m(am''a^{-1})\\ &=mam'm''a^{-1}\\ &=m\circ
		  (m'm'').  \end{align*}

		  Since $G=MA=AM$, a routine calculation proves $\Delta\colon
		  \Gamma(M)\to G$, $(m,x)\mapsto m\gamma(x)$, is a bijective group
		  homomorphism: 
		  \begin{align*}
			  \Delta((m,x)(n,y))&=\Delta(m\lambda_x(n),x\circ y)\\
			  &=m\lambda_x(n)\gamma(x\circ y)\\ &=m\lambda_x(n)\gamma(x)\gamma(y)\\
			  &=mx^{-1}(x\circ n)\gamma(x)\gamma(y)\\
			  &=mx^{-1}(x\gamma(x)n\gamma(x)^{-1})\gamma(x)\gamma(y)\\
			  &=m\gamma(x)n\gamma(y)\\ &=\Delta(m,x)\Delta(n,y).  \end{align*} This
			  completes the proof.
		  \end{proof}

%\begin{proof} For each $m\in M$ there exist unique $a\in A$ and $b\in B$ such
%that $ma=b$. Thus there is a well-defined map %$\gamma\colon M\to A$ such that
%$\gamma(m)=a$ whenever $ma\in B$.  For $m,m'\in M$ with $ma=b$ and $m'a'=b'$,
%where $a,a'\in A$ and $b,b'\in B$, we define the operation \[ m\circ
%m'=m(am'a^{-1}).  \] We claim that it is associative. On the one hand, \[
%m\circ (m'\circ m'')=ma(m'\circ m'')a^{-1}=mam'(a'm''a'^{-1})a^{-1}.  \] On
%the other, since $(m\circ m')aa'=(ma)(m'a')=bb'\in B$, it follows that
%\begin{align*} (m\circ m')\circ m'' &=(m\circ m')(aa')m''(aa')^{-1}\\
%&=m(am'a^{-1})(aa')m''(aa')^{-1}\\ &=mam'a'm''a'^{-1}a^{-1}.  \end{align*}
%Clearly $m\circ 1=m$. If $\overline{m}=a^{-1}m^{-1}a$, then \begin{align*}
%m\circ \overline{m}=ma(a^{-1}m^{-1}a)a^{-1}=1.  \end{align*} Therefore it
%follows that $\Mul{M}$ is a group, see for example~\cite[1.1.2]{MR1357169}.
%The brace compatibility condition is proved as follows: \begin{align*} (m\circ
%m')m^{-1}(m\circ m'') &= m(am'a^{-1})m^{-1}m(am''a^{-1})\\ &=mam'm''a^{-1}\\
%&=m\circ (m'm'').  \end{align*} \end{proof}

\subsection{Near-rings}

This section is based on the work of Sysak on near-rings; see for
example~\cite[\S10]{MR2799412}.  However, the connection with skew braces and
all the examples in this section are new.

We refer to \cite{MR854275} for the basic theory of near-rings.  Recall that
\emph{near-ring} is a set $N$ with two binary operations \[ (x,y)\mapsto x+y,
\quad (x,y)\mapsto x\cdot y, \] such that $(N,+)$ is a (not necessarily
abelian) group, $(N,\cdot)$ is a semigroup, and $x\cdot (y+z)=x\cdot y+x\cdot
z$ for all $x,y,z\in N$. We assume that our near-rings contain a multiplicative
identity, denoted by $1$.

\begin{exa} Let $G$ be a (not necessarilly abelian) additive group and $M(G)$
  be the set of maps $G\to G$. Then $M(G)$ is a near-ring under the following
  operations: 
  \[ (f+g)(x)=f(x)+g(x),\quad (f\cdot g)(x)=g(f(x)),\quad f,g\in
    M(G),\,x\in G.  
  \] 
\end{exa}

A subgroup $M$ of $(N,+)$ is said to be a \emph{construction subgroup} if $1+M$
is a subgroup of the multiplicative subgroup $N^\times$ of units of $N$.

\begin{lem} \label{lem:construction} Let $N$ be a near-ring and $M$ be a
  construction subgroup of $N$. Then $(1+M)\cdot M\subseteq M$. In particular,
  $1+M$ acts on $M$ by left multiplication.  \end{lem}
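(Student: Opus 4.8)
The plan is to work directly from the near-ring axioms. We want to show that if $m_1, m_2 \in M$ then $(1+m_1)\cdot m_2 \in M$, and that the assignment $m \mapsto (1+m_1)\cdot m$ defines an action of the group $1+M$ on the set $M$. The one distributive law available is left-distributivity: $x\cdot(y+z) = x\cdot y + x\cdot z$. Note this gives $x\cdot 0 = 0$ (taking $y=z=0$ and cancelling in $(N,+)$), but we must be careful because right-distributivity is \emph{not} assumed.

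First I would establish the inclusion $(1+M)\cdot M \subseteq M$. Fix $m_1, m_2 \in M$. Since $1+M$ is a subgroup of $N^\times$, the element $(1+m_1)(1+m_2)$ lies in $1+M$, say $(1+m_1)(1+m_2) = 1+m_3$ with $m_3 \in M$. Now expand the left side using left-distributivity: $(1+m_1)(1+m_2) = (1+m_1)\cdot 1 + (1+m_1)\cdot m_2 = (1+m_1) + (1+m_1)\cdot m_2$. Equating with $1+m_3$ gives $(1+m_1) + (1+m_1)\cdot m_2 = 1+m_3$, hence $(1+m_1)\cdot m_2 = -(1+m_1) + 1 + m_3 = (-m_1) \cdot (\text{rearranged})$; more precisely, solving in $(N,+)$ yields $(1+m_1)\cdot m_2 = (1+m_1)^{-_{+}}\!\! +(1+m_3)$ where $(1+m_1)^{-_{+}}$ is the additive inverse. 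Writing $-(1+m_1) = -m_1 - 1$ (additive inverse, noting $(N,+)$ need not be abelian so order matters), we get $(1+m_1)\cdot m_2 = -m_1 - 1 + 1 + m_3 = -m_1 + m_3$. Since $M$ is a subgroup of $(N,+)$ and $m_1, m_3 \in M$, this lies in $M$. This proves the first claim.

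Next, for the action statement: define $\phi\colon (1+M) \times M \to M$ by $\phi(1+m_1, m_2) = (1+m_1)\cdot m_2$, which is well-defined by the above. The identity $1 = 1+0$ of $1+M$ acts trivially: $1\cdot m_2 = m_2$ (a near-ring with identity satisfies $1\cdot x = x$). Compatibility follows from associativity of $(N,\cdot)$: for $1+m_1, 1+m_1' \in 1+M$ and $m_2 \in M$, we have $\phi(1+m_1', \phi(1+m_1, m_2)) = (1+m_1')\cdot\big((1+m_1)\cdot m_2\big) = \big((1+m_1')(1+m_1)\big)\cdot m_2 = \phi\big((1+m_1')(1+m_1), m_2\big)$. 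This is exactly the action axiom, so $1+M$ acts on $M$ by left multiplication.

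The only subtle point — and the one I would flag as the main thing to get right rather than a genuine obstacle — is the non-commutativity of $(N,+)$: one must track the order of additive terms when computing additive inverses and rearranging, since $-(x+y) = -y - x$ in a general group. Everything else is a direct application of the near-ring axioms (left-distributivity, associativity of multiplication, the identity law) together with the defining property of a construction subgroup, so there is no real difficulty beyond bookkeeping.
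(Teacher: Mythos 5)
Your proof is correct and uses essentially the same argument as the paper: expand a product of two elements of $1+M$ via left-distributivity, use closure of $1+M$ under multiplication, and rearrange additively (carefully, since $(N,+)$ need not be abelian) to land in $M$. The only blemish is the garbled intermediate expression ``$(-m_1)\cdot(\text{rearranged})$'', which your subsequent ``more precisely'' clause fixes; the explicit verification of the action axioms is a harmless addition the paper leaves implicit.
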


\begin{proof} 
	Let $a,a'\in 1+M$. Then 
	\[
		-a'+a=-a'+1-1+a=-(-1+a')+(-1+a)\in M
	\]
	since $-1+a'\in M$ and $-1+a\in M$. Let $m,m'\in M$ and write $m=-1+a$ and
	$m'=-1+a'$ for some $a,a'\in 1+M$. Then 
	\[
		(1+m)\cdot m'=a\cdot (-1+a')=-a+a\cdot a'\in M
	\]
	since $a\in 1+M$ and $a\cdot a'\in 1+M$. 
%	Let $m,n\in M$. Then \[ -m+n=-m-1+1+n=-(1+m)+(1+n)\in 1+M.  \]
%  Now since $(1+m)\cdot(1+n)=1+m+(1+m)\cdot n\in 1+M$, it follows that
%  $(1+m)\cdot n\in M$.  
\end{proof}

\begin{pro} \label{pro:nr2brace} Let $N$ be a near-ring and $M$ be a
  construction subgroup.  Then $M$ is a skew brace with \[ mm'=m+m',\quad
  m\circ m'=m+(1+m)\cdot m'.  \] \end{pro}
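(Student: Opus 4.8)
The plan is to verify directly that $(M,\cdot,\circ)$ satisfies the skew brace axioms, using Lemma~\ref{lem:construction} to control where products land. First I would check that $(M,+)$ is a group: this is immediate since $M$ is by hypothesis a subgroup of $(N,+)$, and we are simply writing $mm'=m+m'$. Next, for $(M,\circ)$: I would observe that the map $m\mapsto 1+m$ is a bijection from $M$ onto the construction subgroup $1+M\leq N^\times$, and that under this bijection the operation $m\circ m'$ corresponds to the multiplication in $N^\times$. Indeed, using the left distributivity $x\cdot(y+z)=x\cdot y+x\cdot z$ one computes
\[
(1+m)\cdot(1+m')=(1+m)\cdot 1+(1+m)\cdot m'=(1+m)+(1+m)\cdot m'=1+\bigl(m+(1+m)\cdot m'\bigr)=1+(m\circ m'),
\]
where the middle step rewrites $(1+m)+(1+m)\cdot m'$; note $(1+m)\cdot m'\in M$ by Lemma~\ref{lem:construction}, so $m\circ m'\in M$, i.e.\ $\circ$ is a well-defined binary operation on $M$. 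Since $1+M$ is a group under multiplication and $m\mapsto 1+m$ transports $\circ$ to that multiplication, $(M,\circ)$ is a group with the same identity $0$ (corresponding to $1\in 1+M$).

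The remaining and main point is the compatibility condition~\eqref{eq:compatibility}, which here reads $m\circ(nn')=(m\circ n)(-m)(m\circ n')$, i.e.\ with additive notation $m\circ(n+n')=(m\circ n)-m+(m\circ n')$. I would expand both sides using the definition and left distributivity: the left side is $m+(1+m)\cdot(n+n')=m+(1+m)\cdot n+(1+m)\cdot n'$, while the right side is $\bigl(m+(1+m)\cdot n\bigr)-m+\bigl(m+(1+m)\cdot n'\bigr)=m+(1+m)\cdot n+(1+m)\cdot n'$ after cancelling $(1+m)\cdot n'$ lies in the appropriate position — here one just uses associativity of $+$ and that $-m+m$ cancels. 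So both sides agree and~\eqref{eq:compatibility} holds.

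The step I expect to require the most care is bookkeeping with the noncommutativity of $(N,+)$: near-rings are only left-distributive, so I must be scrupulous never to use $(y+z)\cdot x=y\cdot x+z\cdot x$ and never to commute additive terms unless they are literally being cancelled via inverses. Concretely, in the compatibility computation the cancellation $-m+m=0$ must occur at exactly the right spot in the word, which is why writing $m\circ n=m+(1+m)\cdot n$ and carefully parenthesizing is essential. Once the additive-group, multiplicative-group, and compatibility checks are assembled — with $0$ serving as the common identity as required by Remark~\ref{rem:0=1} — the proposition follows.
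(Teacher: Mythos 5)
Your proof is correct, but it takes a more elementary route than the paper. The paper does not verify the skew brace axioms directly: it sets $\lambda_m(n)=(1+m)\cdot n$, notes (via Lemma~\ref{lem:construction}) that each $\lambda_m$ is an automorphism of $(M,+)$ satisfying $\lambda_{m+\lambda_m(n)}=\lambda_m\lambda_n$, and then invokes Lemma~\ref{lem:Bachiller}, which delivers the group structure on $(M,\circ)$ and the compatibility condition in one stroke. Your argument unpacks exactly what that lemma hides: the identity $(1+m)\cdot(1+m')=1+(m\circ m')$, which you use to transport the group structure of $1+M\leq N^\times$ onto $(M,\circ)$, is precisely the cocycle condition $\lambda_{m\circ n}=\lambda_m\lambda_n$ in disguise, and your direct expansion of $m\circ(n+n')=(m\circ n)-m+(m\circ n')$ using only left distributivity and the cancellation $-m+m=0$ is the content of the compatibility check that Lemma~\ref{lem:Bachiller} performs abstractly. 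What your approach buys is self-containedness and transparency about where left distributivity and the noncommutativity of $(N,+)$ enter; what the paper's approach buys is brevity and a reusable template (the same lemma is used elsewhere, e.g.\ in Lemma~\ref{lem:ccs2brace} and Proposition~\ref{pro:matched_braces}). Both are complete proofs.
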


\begin{proof} 
By Lemma~\ref{lem:construction}, the operations are well-defined.
  For each $m\in M$ let $\lambda_m$ be the map $n\mapsto (1+m)\cdot n$. It is routine
  to verify that $\lambda\colon M\to\Aut(M)$, $m\mapsto\lambda_m$, is a
  well-defined map such that $\lambda_{m+\lambda_m(n)}=\lambda_m\lambda_n$.  By
  applying Lemma~\ref{lem:Bachiller}, the proposition is proved.  \end{proof}

\begin{rem} Proposition~\ref{pro:nr2brace} shows a connection between
  near-rings and skew braces. This connection then answers~\cite[Question
  1]{MR3574204}.  \end{rem}

%\begin{lem} \label{lem:action} Let $N$ be a near-ring. Let $M$ be a
%construction subgroup and $A=1+M$.  Then $A$ acts on $M$ by left
%multiplication.  \end{lem}
%
%\begin{proof} We only need to prove that the action is well-defined, and this
%follows from Lemma~\ref{lem:construction}.  %Let $n,m\in M$. Then
%%$(1+n)m=m+nm\in M$ since  %$M$ is a subgroup of $(N,+)$.  \end{proof}

If $N$ is a near-ring and $M$ is a construction subgroup of $N$,
Proposition~\ref{pro:nr2brace} implies that $M$ is a skew brace.  The following
is the translation of a theorem of Hubert in the language of skew braces: 

\begin{thm}[Hubert] \label{thm:Hubert} Let $A$ be a skew brace with
  multiplicative group isomorphic to $G$. The near-ring $M(G)$ contains a
  construction subgroup $M$ such that $\Gamma(A)\simeq\Gamma(M)$.  
  \end{thm}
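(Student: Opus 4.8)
The plan is to realize $A$ as a skew brace coming from a near-ring construction subgroup inside $M(G)$, and then identify the crossed group. First I would recall the left action $\lambda\colon\Mul{A}\to\Aut\Add{A}$ of Lemma~\ref{lem:lambda}: for $a\in A$, the map $\lambda_a\colon b\mapsto a^{-1}(a\circ b)$ is an automorphism of $\Add{A}$, and $\lambda$ is a group homomorphism $\Mul{A}\to\Aut\Add{A}$. Since $G\simeq\Mul{A}$, I can regard the underlying set $A$ with the operation $\circ$ as the group $G$, and then every element $a\in A$ gives rise to the map $\rho_a\colon G\to G$ that I would like to sit inside $M(G)$. The natural candidate for the embedding $M\hookrightarrow M(G)$ is $a\mapsto\ell_a$, where $\ell_a$ is left translation by $a$ in the \emph{additive} group, i.e.\ $\ell_a(x)=ax$, viewed as a self-map of the set $G=A$; then $1+M$ should be the set of these translations shifted by the identity, matching the recipe in Proposition~\ref{pro:nr2brace}.

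Concretely, I would set $M=\{\ell_a-\ell_1 : a\in A\}\subseteq M(G)$ where $G=\Mul{A}$ is the additive group of the near-ring $M(G)$ and $\ell_a\in M(G)$ denotes the map $x\mapsto ax$ on the set $A$. One checks that $1+(\ell_a-\ell_1)=\ell_a$ and that the additive subgroup generated by the $\ell_a-\ell_1$ is exactly $\{\ell_a-\ell_1:a\in A\}$ (closure under the pointwise $G$-operation corresponds to the multiplicative structure on $A$), so $M$ is a subgroup of $(M(G),+)$. To see $M$ is a construction subgroup, I must verify $1+M=\{\ell_a:a\in A\}$ is a subgroup of the unit group $M(G)^\times$: the composite $\ell_a\cdot\ell_b$ (which in $M(G)$ means $x\mapsto \ell_b(\ell_a(x))$, or $x\mapsto \ell_a(x)\circ b$ depending on the convention) should again be an $\ell_c$, and this is precisely where the skew brace compatibility condition~\eqref{eq:compatibility} enters — it forces the composition of two such translations to be a translation again. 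By Proposition~\ref{pro:nr2brace}, $M$ then becomes a skew brace with $mm'=m+m'$ and $m\circ m'=m+(1+m)\cdot m'$, and I would check that the bijection $A\to M$, $a\mapsto\ell_a-\ell_1$, is an isomorphism of skew braces: it is clearly a bijection, it carries the additive operation of $A$ to $+$, and the multiplicative operation matches because $(1+m)\cdot m'$ unwinds, via the definitions of $\ell$ and the near-ring operations, to the translation corresponding to $a\circ a'$ — again using~\eqref{eq:compatibility} together with the formula $\lambda_a(b)=a^{-1}(a\circ b)$.

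Once $A\simeq M$ as skew braces, the isomorphism $\Gamma(A)\simeq\Gamma(M)$ is automatic, since $\Gamma(-)$ is a functor of skew braces (Definition~\ref{def:structure} depends only on the skew brace structure via $\lambda$). So the only real content is the construction of $M$ and the verification that $A\to M$ is a brace isomorphism.

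\textbf{Main obstacle.} The delicate point is bookkeeping the conventions: $M(G)$ uses $(f\cdot g)(x)=g(f(x))$ and its addition is pointwise multiplication in $G=\Mul{A}$, so I must be careful whether ``$\ell_a$'' should be left or right translation and in which of the two group operations on $A$, in order that $1+M$ close up under the near-ring product and reproduce exactly $\mu$ or $\lambda$ on the nose. Getting these compatible — so that $m\mapsto(1+m)\cdot m'$ literally becomes $a\mapsto a\circ a'$ and not some twisted version — is where the skew brace axiom does the work, and it is the step I expect to require the most care; I anticipate that the correct choice is $\ell_a(x)=a\circ\lambda_{\bar a}(x)$-type maps so that $\ell_a$ realizes the element $a$ of $\Mul{A}$ acting on the \emph{additive} group, making $1+M\simeq\Mul{A}$ act on $M\simeq\Add{A}$ exactly via $\lambda$. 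Everything else is routine verification of near-ring and skew brace axioms.
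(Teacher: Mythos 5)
Your strategy is to produce a construction subgroup $M\subseteq M(G)$ together with a skew brace isomorphism $A\to M$, and then deduce $\Gamma(A)\simeq\Gamma(M)$ by functoriality. This cannot work in general, and the failure is exactly at the step you treat as routine. By Proposition~\ref{pro:nr2brace}, the additive group of the brace attached to a construction subgroup is a subgroup of $(M(G),+)$, i.e.\ of the direct product of $|G|$ copies of $G\simeq\Mul{A}$ with pointwise operation. A brace isomorphism $A\simeq M$ would therefore embed $\Add{A}$ into a direct power of $\Mul{A}$. For the skew brace of Example~\ref{exa:d8q8} this is impossible: every element of order two in a direct power of the quaternion group $Q_8$ is central, while the dihedral group of order eight has non-central involutions, so $\Add{A}$ does not embed into $\Mul{A}^{|A|}$. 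Hence the conclusion $\Gamma(A)\simeq\Gamma(M)$ is genuinely weaker than a brace isomorphism, and any correct proof must produce an $M$ whose brace structure is in general \emph{not} isomorphic to that of $A$; the part you call automatic is where the whole content lies. Your explicit candidate also fails on its own terms: composition of additive translations is automatically a translation, $\ell_a\cdot\ell_b=\ell_{ba}$, so no brace compatibility is used there, and the group $\{\ell_a\}$ under the near-ring product is a copy of $\Add{A}$, not of $\Mul{A}$ — your dictionary has the two group structures interchanged. The genuinely problematic requirement is closure of $\{-\ell_1+\ell_a : a\in A\}$ under the near-ring addition (pointwise $\circ$), which does not follow from~\eqref{eq:compatibility}: it already fails for the brace of Example~\ref{exa:s3c6} (take $a=b=x=(12)$ and evaluate), and the variant using right translations closes up precisely when every $\lambda_x$ is also an automorphism of $\Mul{A}$, i.e.\ under condition~\eqref{eq:double} of Corollary~\ref{cor:double}, which is an extra hypothesis rather than a skew brace axiom. (A small symptom of the same bookkeeping: in a nonabelian $G$ one has $1+(\ell_a-\ell_1)\neq\ell_a$ in general; you would need $-\ell_1+\ell_a$.)

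For comparison, the paper constructs nothing inside $M(G)$ by hand: it applies Theorem~\ref{thm:brace2T} to obtain the triply factorized group $\Gamma(A)=AM=BM=AB$ with $A\cap B=1$ and $M\simeq\Mul{A}\simeq G$, and then invokes Hubert's theorem \cite[Theorem 2.9]{MR2331608}, which converts such a triple factorization into a construction subgroup of $M(G)$ whose associated group is $\Gamma(A)$. To make your argument self-contained you would have to reprove that result, and the obstruction above shows the required maps cannot simply be translates of the additive group of $A$; extracting them from the triple factorization is the real work, which your proposal does not engage with.
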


\begin{proof} By Theorem~\ref{thm:brace2T}, the group $G=\Gamma(A)$ provides a
  triply factorized group $G=MA=MB=AB$ with $A\cap B=1$. Now ~\cite[Theorem
  2.9]{MR2331608} applies.  \end{proof}

%\begin{proof} Since $I$ is a left ideal, the map \[ I\times I\to I,\quad
%(x,y)\mapsto x\circ y=(1+x)\cdot (1+y)-1, \] is well-defined. It is
%associative: \begin{align*} &x\circ (y\circ z)=x\circ\left((1+y)\cdot
%(1+z)-1\right)=(1+x)\cdot (1+y)\cdot(1+z)-1,\\ &(x\circ y)\circ
%z=\left((1+x)\cdot(1+y)-1\right)\circ z=(1+x)\cdot(1+y)\cdot(1+z)-1.
%\end{align*} The neutral element is $0$ since \[ 0\circ
%x=(1+0)\cdot(1+x)-1=x,\quad x\circ 0=(1+x)\cdot(1+0)-1=x.  \] Now if the
%inverse of $1+x$ is denoted by $y$, then \begin{align*} &x\circ
%(y-1)=(1+x)\cdot(1+(y-1))-1=(1+x)\cdot y-1=1-1=0,\\ &(y-1)\circ
%x=(1+(y-1))\cdot(1+x)-1=y\cdot(1+x)-1=1-1=0.  \end{align*} The brace
%compatibility condition is proved as follows: \begin{align*} (x\circ
%y)x^{-1}(x\circ z) &=(1+x)\cdot(1+y)-1-x+(1+x)\cdot(1+z)-1\\ &=1+x+(1+x)\cdot
%y-1-x+1+x+(1+x)\cdot z-1\\ &=1+x+(1+x)\cdot y+(1+x)\cdot z-1\\ &=
%(1+x)\cdot(1+y+z)-1\\ &=x\circ (yz).  \end{align*} This completes the proof.
%\end{proof}
\subsection{Nilpotent rings}

We now construct examples of skew braces related to nilpotent rings and
algebras.  These examples are influenced  by near ring theory and construction
subgroups.  The following result is inspired by \cite[Example 1.6]{MR854275}. 

\begin{lem} \label{lem:aga} Let $F$ be a finite field and let  $A$ be a
  commutative $F$-algebra such that $A=F+N$ where $N$ is a nilpotent subalgebra
  of $A$.  Let $S$ be the set of all functions $A\to A$ which can be written as
  polynomials from $N[x]$ (where two functions are equal if they have the same
  values).  Then $S$ with the operation \[ f(x)\bullet g(x)=f(x)+g(x+f(x)) \]
  is a group.  \end{lem}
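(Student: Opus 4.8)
The plan is to realize $S$ as a construction subgroup of a suitable near-ring, so that the group structure comes for free from near-ring theory (and is in fact the multiplicative group of a skew brace by Proposition~\ref{pro:nr2brace}). Concretely, consider the near-ring $M(A)$ of all functions $A\to A$ under pointwise addition and composition (with $(f\cdot g)(x)=g(f(x))$), as in the first example of the near-ring subsection. The claimed operation rewrites as
\[
(f\bullet g)(x)=f(x)+g(x+f(x))=\bigl(f+(\id+f)\cdot g\bigr)(x),
\]
so $f\bullet g=f+(1+f)\cdot g$ in the notation of Proposition~\ref{pro:nr2brace}, where $1=\id_A$ is the multiplicative identity of $M(A)$. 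Thus it suffices to show that $S$ is a construction subgroup of $M(A)$: that $(S,+)$ is a subgroup of $(M(A),+)$, and that $1+S=\{\id_A+f:f\in S\}$ is a subgroup of the unit group $M(A)^\times$.

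First I would check that $(S,+)$ is a subgroup: if $f,g\in N[x]$ then $f-g\in N[x]$ as well, since $N$ is a subring (subalgebra) of $A$ and $N[x]$ is closed under subtraction; one must only note that equality of functions, not of formal polynomials, is the relevant equivalence, which is harmless. The substantive point is that $1+S\subseteq M(A)^\times$ and is closed under composition and inverses. For a single $f=\sum_{i\ge 1} n_i x^i + n_0\in N[x]$, I would show the map $x\mapsto x+f(x)$ is a bijection of $A$ and that its inverse again has the form $x\mapsto x+g(x)$ with $g\in N[x]$. The natural approach: set $\phi(x)=x+f(x)$; since $N$ is nilpotent, say $N^{m}=0$, one solves $\phi(y)=x$ by successive approximation — start from $y_0=x$ and iterate $y_{k+1}=x-f(y_k)$; because the correction terms lie in deeper and deeper powers of $N$ (each substitution of $y_k=x+(\text{something in }N)$ into $f$ changes the value only modulo a higher power of $N$), the iteration stabilizes after at most $m$ steps, yielding a two-sided inverse which, by construction, is of the form $x\mapsto x+g(x)$ with $g$ a polynomial expression with coefficients in $N$, hence in $S$. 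Composition $\phi_1\circ\phi_2$ for $\phi_j(x)=x+f_j(x)$ is visibly again of the form $x\mapsto x+h(x)$ with $h(x)=f_2(x)+f_1(x+f_2(x))\in N[x]$ (here finiteness of $F$ and the polynomial description let us stay inside $N[x]$ rather than a larger function ring). This shows $1+S$ is a subgroup of $M(A)^\times$ isomorphic, via $f\mapsto(x\mapsto x+f(x))$, to $(S,\bullet)$, and in particular that $\bullet$ is a group operation on $S$.

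The main obstacle is the careful bookkeeping in the inversion step: one must verify that the successive-approximation inverse is not merely a set-theoretic bijection of $A$ but is itself represented by a polynomial over $N$, so that it lies in $S$. This is where nilpotence of $N$ is essential — it both guarantees termination of the iteration and keeps all intermediate expressions polynomial with coefficients in $N$. Once this is in hand, associativity of $\bullet$, existence of the identity ($f=0$), and existence of inverses are all inherited from the group structure on $1+S\le M(A)^\times$, and the lemma follows; alternatively one may invoke Proposition~\ref{pro:nr2brace} directly to conclude that $(S,+,\bullet)$ is a skew brace, whose multiplicative group is in particular a group. I would also remark that the finiteness of $F$ is used only to ensure that the polynomial representatives behave well (e.g.\ that $N[x]$, viewed as functions, is closed under the relevant operations); the core argument is the nilpotence of $N$.
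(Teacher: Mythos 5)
Your proof is correct, and its essential computation --- inverting $x\mapsto x+f(x)$ by successive approximation, with nilpotence of $N$ guaranteeing both termination and that the inverse is again represented by a polynomial over $N$ --- is exactly the paper's construction, there phrased as the nested expression $f(x)=-g(x-g(x-\cdots))$ with $n$ brackets where $N^n=0$, producing a left $\bullet$-inverse. What differs is the packaging: the paper checks associativity and the identity for $\bullet$ directly and then only needs left inverses, whereas you embed everything into the near-ring $M(A)$, rewrite $f\bullet g=f+(1+f)\cdot g$, and reduce the lemma to showing that $S$ is a construction subgroup, i.e.\ that $1+S$ is a subgroup of $M(A)^\times$. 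This buys associativity and the identity for free via the isomorphism $f\mapsto \id_A+f$ of $(S,\bullet)$ onto $(1+S,\cdot)$, at the mild cost of checking closure of $1+S$ under composition. Your route is in the spirit of the remark the paper places after Proposition~\ref{pro:aga}, but note that the shortcut given there (``$S$ is nilpotent in $M(A,+)$, hence a construction subgroup'') is stated only for polynomials with zero constant term; once nonzero constant terms are allowed, $S$ is no longer nilpotent under near-ring multiplication (products of nonzero constant functions stay nonzero constant), so your direct verification that $1+S$ is closed under composition and inversion is genuinely needed and correctly covers the general case, and in that sense your argument is slightly more robust than the remark while matching the actual proof in substance. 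Two details worth making explicit: $N$ is an ideal of $A=F+N$, which is what guarantees that $f(a)\in N$ and that all substituted expressions have coefficients in $N$; and the set-theoretic right inverse produced by the iteration is two-sided (by injectivity of $x\mapsto x+f(x)$, which follows from the same $N$-adic estimate, or simply because the paper's standard ``associative operation with identity and one-sided inverses'' argument applies). Neither point is difficult, and the paper glosses over the analogous issues as well.
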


\begin{proof} Direct calculations show that the operation is associative and
  that $f(x)=0$ is the identity element of $S$. It suffices to prove that each
  element in $S$ has a left inverse, i.e, for each $g(x)\in S$ there exists
  $f(x)\in S$ such that $f(x)=-g(x+f(x))$.  The map $f(x)$ can be obtained
  recursively as \[ f(x)=-g(x-g(x-g(x+g(\cdots(x-g(x))\cdots )))), \] where the
  number of brackets is equal to $n$ and $N^n=0$. Indeed, for any $p\in N[x] $,
  $-g(x-g(x-g(x+g(\cdots (x-g(x+p))\cdots ))))=f(x)$ because the element $p$
  will be multiplied by at least $n$ elements from $N$ in the left hand-side of
  this equation. Hence it will have zero value (where the left hand side has
  $n$ brackets). By substituting $p=-g(x)$ we get  that \[
  f(x)=-g(x-g(x-g(x+g(\cdots (x-g(x))\cdots )))), \] where the number of
  brackets is $n+1.$ Therefore, $-g(x+f(x))= f(x),$ as required.  \end{proof}

\begin{rem} The same construction of Lemma~\ref{lem:aga} works when $A$ is a
  noncommutative associative algebra. In this case instead of the polynomial
  ring $A[x]$ one takes the noncommutative polynomial ring, where the variable
  $x$ does not commute with the elements of $A$.  \end{rem}

We are now ready to present some examples of skew braces inspired by the
near-ring of functions $M(G)$ over a group $G$.

\begin{pro} \label{pro:aga} Let $F$ be a finite field and let  $A$ be a
  commutative $F$-algebra such that $A=F+N$ where $N$ is a nilpotent subalgebra
  of $A$.  Let $S$ be the set of all functions $A\to A$ which can be written as
  polynomials from $N[x]$.  Then $S$ with the usual addition and  \[
  f(x)\bullet g(x)=f(x)+g(x+f(x)), \] is a classical brace.  \end{pro}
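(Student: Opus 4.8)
The claim is that $(S,+,\bullet)$ is a classical brace, where $S\subseteq N[x]$ consists of the functions $A\to A$ representable by polynomials with coefficients in the nilpotent subalgebra $N$. We already know from Lemma~\ref{lem:aga} that $(S,\bullet)$ is a group, and $(S,+)$ is clearly an abelian group (it is a subgroup of the additive group of the function algebra $A^A$, being closed under addition and negation since $N$ is). So only two things remain: that $S$ is closed under the relevant operations in the way the brace axiom requires, and the compatibility identity $f\bullet(g+h)+f = f\bullet g + f\bullet h$ for all $f,g,h\in S$.

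\textbf{Key steps.} First I would record that for $f,g\in S$ the function $g(x+f(x))$ again lies in $S$: substituting the polynomial $x+f(x)$ (whose ``constant-free part'' $f(x)$ has coefficients in $N$) into $g\in N[x]$ produces a polynomial whose coefficients are sums of products of elements of $N$, hence in $N$; this is the same substitution-closure already used implicitly in Lemma~\ref{lem:aga}, so $f\bullet g = f(x)+g(x+f(x))\in S$ is well-defined. Second, and this is the heart of the matter, I would verify the brace compatibility by a direct computation: for $f,g,h\in S$,
\begin{align*}
  f\bullet(g+h)+f
  &= \bigl(f(x)+(g+h)(x+f(x))\bigr)+f(x)\\
  &= f(x)+g(x+f(x))+h(x+f(x))+f(x)\\
  &= \bigl(f(x)+g(x+f(x))\bigr)+\bigl(f(x)+h(x+f(x))\bigr)\\
  &= f\bullet g + f\bullet h,
\end{align*}
where the only nontrivial point is that $(g+h)(x+f(x)) = g(x+f(x))+h(x+f(x))$, i.e.\ that substitution into a sum of polynomials is additive, which is immediate from the definition of polynomial substitution (it is an $F$-algebra homomorphism $N[x]\to A^A$ on the relevant domain). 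This establishes~\eqref{eq:compatibility} in additive notation, and since $(S,+)$ is abelian, $S$ is by definition a classical brace.

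\textbf{Main obstacle.} There is essentially no deep obstacle; the one delicate point is the bookkeeping around ``functions are equal if they agree as functions'' versus equality as formal polynomials. One must make sure that the operation $\bullet$ is well-defined on equivalence classes of functions (if $f_1,f_2$ represent the same function, then $g(x+f_1(x))$ and $g(x+f_2(x))$ represent the same function), and that all the algebraic manipulations above — which are valid at the level of formal polynomials — descend to functions. Since $A$ is finite this is routine: every step is an identity of formal polynomials in $N[x]$, hence holds after evaluation. Once that is noted, the computation above finishes the proof, so I would keep the write-up short, citing Lemma~\ref{lem:aga} for the group structure of $(S,\bullet)$ and merely performing the displayed one-line verification of compatibility.
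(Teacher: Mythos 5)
Your proposal is correct and follows essentially the same route as the paper: invoke Lemma~\ref{lem:aga} for the group $(S,\bullet)$ and verify the brace compatibility $f\bullet(g+h)+f=f\bullet g+f\bullet h$ by the direct computation using additivity of substitution, which is exactly the paper's displayed calculation. Your extra remarks on closure of $S$ under the operations and well-definedness at the level of functions are routine points the paper leaves implicit, and they are handled correctly.
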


\begin{proof} 
%	\textcolor{red}{Since $N[x]$ is a construction subgroup of $M(A,+)$ and $x$
%	is the identity, Proposition~\ref{pro:nr2brace} implies that $S$ is a classical brace.}
%	with the operation $(f\circ g)(x)=f+g(x+f(x))$.
	By Lemma~\ref{lem:aga} it remains to show the brace compatibility
  condition: \begin{align*} f(x)\bullet
    (g(x)+h(x))-f(x)&=g(x+f(x))+h(x+f(x))\\ &=f(x)\bullet
    g(x)-f(x)+f(x)\bullet h(x).  \end{align*} This completes the proof.
\end{proof}

\begin{rem}
	Notice that if we consider $S$ to be the set of polynomial functions from
	$N[x]$ with zero constant terms, then Proposition~\ref{pro:aga} has a very
	short proof: since $S$ is nilpotent in the near-ring $M(A,+)$, it is a
	construction subgroup hence a skew brace by Proposition~\ref{pro:nr2brace}.  As the polynomial function $x$
	is the identity map, and hence the identity in the near-ring $M(A, +)$, we
	get $(f\bullet g)(x)=f(x)+g(x+f(x))$.
\end{rem}

\begin{cor} The sets $T=\{f\in S:f(1)=0\}$ and $\{f\in T:f(0)=0\}$ are
  subbraces of $S$.  \end{cor}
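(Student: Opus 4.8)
The plan is to verify directly that each of the two sets is simultaneously a subgroup of the additive group $(S,+)$ and of the multiplicative group $(S,\bullet)$; since $S$ is a brace, the compatibility identity then holds automatically for the restricted operations, so the subset is a subbrace. Throughout I would write $\bar f$ for the inverse of $f\in S$ in the group $(S,\bullet)$, which exists by Lemma~\ref{lem:aga}, and I recall that the identity of $(S,\bullet)$ is the zero function and that $(f\bullet g)(x)=f(x)+g(x+f(x))$.

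First I would treat $T=\{f\in S:f(1)=0\}$. Additive closure is immediate: if $f(1)=g(1)=0$ then $(f+g)(1)=0$ and $(-f)(1)=0$, and the zero function lies in $T$. For closure under $\bullet$, evaluate at $1$: if $f(1)=g(1)=0$ then $(f\bullet g)(1)=f(1)+g(1+f(1))=0+g(1)=0$, so $f\bullet g\in T$. For closure under $\bullet$-inverses, use the defining equation $f\bullet\bar f=0$: for every $x$ one has $f(x)+\bar f(x+f(x))=0$, and setting $x=1$ with $f(1)=0$ gives $\bar f(1)=0$, so $\bar f\in T$. Hence $T$ is a subgroup of both $(S,+)$ and $(S,\bullet)$, i.e.\ a subbrace.

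The set $\{f\in T:f(0)=0\}=\{f\in S:f(0)=f(1)=0\}$ is handled by the same three computations carried out at $x=0$ and $x=1$ simultaneously: each of the two conditions $f(0)=0$, $f(1)=0$ is preserved by $+$, by $\bullet$ (since $(f\bullet g)(a)=f(a)+g(a+f(a))=g(a)$ whenever $f(a)=0$), and by passage to $\bar f$ (set $x=0$ and then $x=1$ in $f(x)+\bar f(x+f(x))=0$). So this set is again an additive and multiplicative subgroup of $S$, hence a subbrace.

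There is no genuinely hard step; the only point worth flagging is that one should control the value of the $\bullet$-inverse $\bar f$ at a fixed point by reading off the relation $f\bullet\bar f=0$ at that point, rather than by unwinding the recursive formula for $\bar f$ in Lemma~\ref{lem:aga}, which would be messy. Conceptually the cleanest packaging is to send each $f\in S$ to the bijection $\hat f\colon A\to A$, $\hat f(a)=a+f(a)$: then $\widehat{f\bullet g}=\hat g\circ\hat f$ and $\widehat{\bar f}=\hat f^{-1}$, so $T$ (resp.\ $\{f\in T:f(0)=0\}$) is exactly the preimage of the pointwise stabilizer of $1$ (resp.\ of $\{0,1\}$) under $f\mapsto\hat f$, which is manifestly a subgroup; the computations above are just this observation spelled out.
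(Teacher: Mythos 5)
Your proof is correct and is exactly the verification the paper leaves implicit (its proof of this corollary is just the one line ``It follows from Proposition~\ref{pro:aga}''): one checks closure of each set under $+$, under $\bullet$, and under $\bullet$-inverses by evaluating at the fixed points, the key step being to read the inverse condition off the identity $f(x)+\bar f(x+f(x))=0$ at $x=1$ (resp.\ $x=0$). The repackaging via $f\mapsto\hat f$, $\hat f(a)=a+f(a)$, is a nice way to see all three closures at once, but the content is the same.
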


\begin{proof} It follows from Proposition~\ref{pro:aga}.  \end{proof}

\begin{lem} \label{lem:aga2} Let $F$ be a finite field and let  $A$ be a
  commutative $F$-algebra such that  $A=F+N$ where $N$ is a nilpotent
  subalgebra of $A$.  Let $S$ be the set of all functions $A\rightarrow A$
  which can be written as polynomials from $N[x]$ (where two functions are
  equal if they have the same values). Then $S$ with the operation \[f(x)\odot
  g(x)=f(x)\circ g(x\circ f(x)), \] where $a\circ b=a+b+ab$, $a,b\in A$, is a
  group.  \end{lem}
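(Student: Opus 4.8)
The plan is to mimic the proof of Lemma~\ref{lem:aga}, replacing the additive structure on $A$ used there by the group operation $\circ$ given by $a\circ b=a+b+ab$. First I would recall that, since $N$ is a nilpotent subalgebra, $(N,\circ)$ is a group (this is the standard adjoint-group construction; it is exactly the group structure appearing in Proposition~\ref{pro:agata} applied to the nilpotent ring $N$). Consequently, for $f,g\in S$ the expression $f(x)\circ g(x\circ f(x))$ again lies in $S$, so $\odot$ is a well-defined binary operation on $S$; here I use that $S$ is closed under $\circ$ because it is closed under addition and multiplication of functions valued in $N[x]$.

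Next I would verify associativity by a direct substitution, entirely parallel to the $\bullet$ case: for $f,g,h\in S$ one computes $(f\odot g)\odot h$ and $f\odot(g\odot h)$ and checks they both equal $f(x)\circ g(x\circ f(x))\circ h\bigl(x\circ f(x)\circ g(x\circ f(x))\bigr)$, using associativity of $\circ$ on $A$. The constant function $f(x)=0$ is the identity, since $0\circ a=a=a\circ 0$ for all $a\in A$, giving $0\odot g=g$ and $f\odot 0=f$.

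The main point, and the only step requiring care, is the existence of inverses. As in Lemma~\ref{lem:aga} I would produce a left inverse of a given $g(x)\in S$ recursively: I seek $f(x)\in S$ with $f(x)=\overline{g(x\circ f(x))}$, where $\overline{\phantom{a}}$ denotes the inverse in the group $(N,\circ)$. The candidate is obtained by unfolding this fixed-point equation $n+1$ times, where $N^{n}=0$, producing a polynomial expression in which any ``error term'' $p\in N[x]$ introduced at the innermost stage is multiplied by at least $n$ elements of $N$ and hence vanishes. The nilpotency argument is identical to the one in Lemma~\ref{lem:aga}: substituting $p=\overline{g(x)}$ shows the truncated and untruncated recursions agree, so the finite expression $f(x)$ genuinely satisfies $f(x)=\overline{g(x\circ f(x))}$, i.e. $f\odot g=0$. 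Since $\odot$ is associative with identity and every element has a left inverse, $(S,\odot)$ is a group by~\cite[\S1.1.2]{MR1357169}.

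The hard part is bookkeeping in the nilpotency estimate: one must check that passing from the additive $+$ to the group law $\circ=a+b+ab$ does not disturb the counting of how many factors from $N$ multiply the error term, since $\circ$ introduces extra products $ab$ — but these extra terms only increase the number of $N$-factors, so the estimate is if anything strengthened, and the argument of Lemma~\ref{lem:aga} goes through verbatim. I expect no genuine obstacle beyond writing the recursive formula carefully.
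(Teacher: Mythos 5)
Your proposal is correct and follows essentially the same route as the paper: associativity and the identity $f(x)=0$ are checked directly, and a left inverse is produced by iterating the fixed-point equation $f(x)=\overline{g(x\circ f(x))}$, with nilpotency of $N$ guaranteeing the recursion terminates. The paper merely makes your $\circ$-inverse explicit as the polynomial $t(x)=\sum_{i=1}^{n}(-1)^{i}g(x)^{i}$, which also makes it transparent that the resulting $f$ lies in $S$; otherwise the two arguments coincide.
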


\begin{proof} It is easy to prove that $\odot $ is associative and that
  $f(x)=0$ is the identity element of $S$. To prove that $S$ is a group it
  suffices to show that every element in $S$ has a left inverse, i.e. that for
  every $g(x)\in S$ there is $f(x)\in S$ such that $f(x)\circ g(x\circ
  f(x))=0$, so \[ f(x)=-g(x\circ f(x))-f(x)\cdot g(x\circ f(x)).  \]
	
	%Now let $f_{1}(x)=x$ and $f_{i+1}(x)=-g(x\circ f_{i}(x))-f_{i}(x)\cdot
	%g(x\circ f_{i}(x))$ for $i\geq2$. Then $f(x)=f_{n+1}(x)$, where $n$ is
	%the nilpotency index of $N$. 
	Let $n$ be such that $N^{n}=0$ and let $t(x)=\sum
	_{i=1}^{n}(-1)^{i}g(x)^{i}$. Then \[ g(x)\circ t(x)=t(x)\circ g(x)=0,
	\] and hence \[ g(x\circ f(x))\circ t(x\circ f(x))=t(x\circ f(x))\circ
	g(x\circ f(x))=0.  \] Therefore, the equation $f(x)\circ g(x\circ
	f(x))=0$ is equivalent to \[ f(x)=t(x\circ f(x)) \] Now $f(x)=t(x\circ
	t(x\circ t(\cdots (x\circ t(x))\cdots )))$, where the number of
	brackets is equal to $n$.  \end{proof}

\begin{rem} The same construction of Lemma~\ref{lem:aga2} works when $A$ is a
  noncommutative associative algebra. In this case instead of the polynomial
  ring $A[x]$ one takes the noncommutative polynomial ring, where the variable
  $x$ does not commute with the elements of $A$.  \end{rem}

\begin{pro} 
	\label{pro:aga2}
	Let $F$ be a finite field and let  $A$ be an $F$-algebra such that
  $A=F+N$ where $N$ is a nilpotent subalgebra of $A$.  Let $S$ be the set of
  all functions $A\rightarrow A$ which can be written as a noncommutative
  polynomials from $N[x]$.  Then $S$ with the operations \[f(x)\odot
  g(x)=f(x)\circ g(x\circ f(x)), \quad (f\circ g)(x)=f(x)\circ g(x), \] is a
  skew brace.  \end{pro}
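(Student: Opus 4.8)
The plan is to realize $S$ as a skew brace via the recipe of Lemma~\ref{lem:dual} (or, dually, Lemma~\ref{lem:Bachiller}), exactly as in the classical case of Proposition~\ref{pro:aga}, but now with the nilpotent-ring adjunction operation $a\circ b=a+b+ab$ replacing ordinary addition everywhere. So I would take $\Mul{S}$ to be the group $(S,\circ)$ with $(f\circ g)(x)=f(x)\circ g(x)$ — this is a group because $(A,\circ)$ is a group (as $N$ is nilpotent, hence Jacobson radical, so $(N,\circ)$ is a group and $A=F+N$ with $F$ a field gives $(A,\circ)$ a group) and pointwise operations inherit the group structure — and I would take $\Add{S}$ to be $(S,\odot)$, which is a group by Lemma~\ref{lem:aga2}. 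The neutral element of both is the zero function $0$, consistent with Remark~\ref{rem:0=1}.

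First I would define, for each $f\in S$, the map $\lambda_f\colon S\to S$ by $\lambda_f(g)(x)=g(x\circ f(x))$, i.e. precomposition with the function $x\mapsto x\circ f(x)$. One checks $\lambda_f(0)=0$ and that $\lambda_f$ is a homomorphism of the group $(S,\circ)$: indeed $\lambda_f(g\circ h)(x)=(g\circ h)(x\circ f(x))=g(x\circ f(x))\circ h(x\circ f(x))=(\lambda_f(g)\circ\lambda_f(h))(x)$. Moreover $\lambda_f$ is bijective: since $N^n=0$, the substitution $x\mapsto x\circ f(x)$ is invertible on polynomial functions from $N[x]$ (its inverse is $x\mapsto x\circ t(x)\circ\cdots$ as in the proof of Lemma~\ref{lem:aga2}), so $\lambda_f\in\Sym_S$, in fact $\lambda_f\in\Aut(S,\circ)$. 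Next I would verify the defining identity~\eqref{eq:dual} of Lemma~\ref{lem:dual}, or equivalently that $\lambda\colon S\to\Aut(S,\circ)$, $f\mapsto\lambda_f$, is a group homomorphism from $(S,\odot)$ composed appropriately — concretely, that $\lambda_{f\odot g}=\lambda_g\lambda_f$ (note the order), which unwinds to the associativity-type computation $h((x\circ f(x))\circ g(x\circ f(x)))=h(x\circ(f\odot g)(x))$, true by the very definition $(f\odot g)(x)=f(x)\circ g(x\circ f(x))$ together with associativity of $\circ$ on $A$. Finally, the operation reconstructed from Lemma~\ref{lem:dual}, namely $f\cdot g:=f\circ\lambda_f^{-1}(g)$, must be shown to coincide with $\odot$; equivalently $\lambda_f(g)=$ the element $h$ with $f\circ h=f\odot g$, and since $f\odot g=f\circ\lambda_f(g)$ by definition, this is immediate.

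So the cleaner route is: set $\Add{S}=(S,\odot)$, $\Mul{S}=(S,\circ)$, show $\lambda_f(g)(x)=g(x\circ f(x))$ defines a homomorphism $\lambda$ from $\Mul{S}$ into $\Aut\Add{S}$ (not $\Aut(S,\circ)$!), and then invoke Lemma~\ref{lem:lambda}/the skew brace axiom directly, or better: verify the compatibility~\eqref{eq:compatibility} by hand. That last verification is the heart of it: one must show $f\circ(g\odot h)=(f\circ g)\odot f'\odot(f\circ h)$ where $f'$ is the $\odot$-inverse of $f$. I would instead phrase everything through Lemma~\ref{lem:Bachiller} applied to the group $(S,\odot)$ is awkward because that lemma builds $\circ$ from $+$; here I already have $\circ$ and want $\odot$ as the additive group, so Lemma~\ref{lem:dual} is the right tool. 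Thus: check $\lambda_f(0)=0$, check $\lambda$ is a group homomorphism $\Mul{S}\to\Sym_S$, check the compatibility~\eqref{eq:dual}; all three reduce to the identity $(x\circ f(x))\circ g(x\circ f(x))=x\circ(f(x)\circ g(x\circ f(x)))=x\circ(f\odot g)(x)$ and the fact that precomposition distributes over $\circ$.

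The main obstacle I anticipate is purely bookkeeping: making sure that $\lambda_f$ genuinely lands in $\Aut(S,\odot)$ — i.e. that $\lambda_f(g\odot h)=\lambda_f(g)\odot\lambda_f(h)$ — which requires expanding $\lambda_f(g)\odot\lambda_f(h)$ as $\lambda_f(g)(x)\circ\lambda_f(h)(x\circ\lambda_f(g)(x))$ and matching it with $(g\odot h)(x\circ f(x))=g(x\circ f(x))\circ h((x\circ f(x))\circ g(x\circ f(x)))$; these agree precisely because $x\circ\lambda_f(g)(x)=x\circ g(x\circ f(x))$ need not equal $(x\circ f(x))\circ g(x\circ f(x))$ on the nose, so one must be careful about \emph{which} shift is being applied — this is where the noncommutativity of $A$ and the distinction between $\odot$ and the auxiliary shift have to be tracked precisely. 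Once that single composition identity is nailed down (exactly analogous to the computation already done inside Lemma~\ref{lem:dual}'s proof for $\lambda^{-1}_a(bc)=\lambda^{-1}_a(b)\lambda^{-1}_a(c)$), the rest follows formally from Lemma~\ref{lem:dual}, and the proof closes.
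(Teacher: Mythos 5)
Your assignment of the two group structures is reversed, and this is not a cosmetic slip: the triple you set out to verify is not a skew brace. In this proposition the \emph{additive} group is the pointwise group $(S,\circ)$ and the \emph{multiplicative} group is $(S,\odot)$ (compare Proposition~\ref{pro:aga}, where the additive group is pointwise addition and $\bullet$ is multiplicative, and Proposition~\ref{pro:nr2brace}, of which this is an instance). The compatibility~\eqref{eq:compatibility} to be checked is therefore
\[
f\odot(g\circ h)=(f\odot g)\circ f^{-1}\circ(f\odot h),
\]
with $f^{-1}$ the pointwise $\circ$-inverse; this is a two-line expansion once Lemma~\ref{lem:aga2} supplies the group $(S,\odot)$, and that is the paper's entire proof. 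With your assignment ($\Add{S}=(S,\odot)$, $\Mul{S}=(S,\circ)$) the axiom would instead read $f\circ(g\odot h)=(f\circ g)\odot\bar f\odot(f\circ h)$ with $\bar f$ the $\odot$-inverse, and this fails when $N$ is noncommutative: unwinding it (equivalently, unwinding the condition that the $\lambda$-maps of the swapped structure lie in $\Aut(S,\odot)$, which Lemma~\ref{lem:lambda} forces in any skew brace) one finds it requires $g(y)\circ f(y\circ g(y))=f(y)\circ g(y)$, which for constant $f\equiv\alpha$, $g\equiv\beta$ is $\beta\alpha=\alpha\beta$. This is exactly the obstruction you hit in your final paragraph: the two expansions of $\lambda_f(g\odot h)$ and $\lambda_f(g)\odot\lambda_f(h)$ genuinely disagree, because precomposition with $x\mapsto x\circ f(x)$ respects the pointwise operation but not the internal shift hidden in $\odot$. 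It is not bookkeeping; the identity is false. The appeal to Lemma~\ref{lem:dual} has the same mismatch: that lemma manufactures the \emph{additive} operation as $f\circ\lambda_f^{-1}(g)$ from a given multiplicative group, whereas $f\odot g=f\circ\lambda_f(g)$ uses $\lambda_f$, not $\lambda_f^{-1}$ (and your claimed $\lambda_{f\odot g}=\lambda_g\lambda_f$ should be $\lambda_f\lambda_g$).

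The repair is cheap, because every computation you actually carry out is the right one — it is only fed into the wrong framework. Keep $\lambda_f(g)(x)=g(x\circ f(x))$; you correctly observe that $\lambda_f\in\Aut(S,\circ)$ (precomposition distributes over the pointwise operation) and that
\[
x\circ(f\odot g)(x)=(x\circ f(x))\circ g(x\circ f(x)),
\]
which gives $\lambda_{f\circ\lambda_f(g)}=\lambda_{f\odot g}=\lambda_f\lambda_g$. Now apply Lemma~\ref{lem:Bachiller} to the group $(S,\circ)$ with this $\lambda$: it produces precisely the skew brace with additive group $(S,\circ)$ and multiplicative operation $f\circ\lambda_f(g)=f\odot g$. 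That route is a legitimate alternative to the paper's direct verification of the compatibility identity displayed above.
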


\begin{proof} 
%	\textcolor{red}{Let $A$ be the group $F\times (N,\circ)$. Then $N[x]$ is nilpotent in $M(A)$. Since
%	the inverse of any alement in $1_A+N[x]$ can be computed recursively, it follows that $N[x]$ is a construction subgroup of $M(A)$. 
%	Then Proposition~\ref{pro:nr2brace} implies the claim.}
	By Lemma~\ref{lem:aga2} it suffices to prove the compatibility
  condition.  Let $f(x)^{-1}$ denote the inverse of $f(x)$ in the group
  $(S,\circ )$. 
	%Notice that $(S,\circ , \odot)$ is a skew left brace, here $\circ $
	%acts as an ‘addition’ and $\odot $ acts as a ‘multiplication’.  To
	%verify that $(S,\circ, \odot)$ is a skew left brace observe that for
	%$f(x),g(x) \in S$, 
	\begin{align*} (f \odot (g\circ h))(x) &= f(x)\circ (g\circ h)(x\circ
	  f(x))\\ &= f(x)\circ g(x\circ f(x))\circ h(x\circ f(x))\\ &= (f \odot
	  g)(x)\circ f(x)^{-1} \circ (f \odot h)(x).  \end{align*} This
	completes the proof.  
\end{proof}

\begin{rem}
	Proposition~\ref{pro:aga2} can be obtained from
	Proposition~\ref{pro:nr2brace} when $S$ is the set of functions which are
	polynomial functions from $N[x]$ with zero constant term.
\end{rem}
    
%\be%gin{example}\label{pierwszy} Let notation be as in Example \ref{aga}, hence
%$(S%, \odot )$ is a group. Notice that  $(S, +)$ is a group  when we denote
%$(f%+g)(x)=f(x)+g(x)$.  Notice that $(S, +, \odot)$ is a skew left brace.
%Sin%ce $+$ is an abelian group $(S, +, \odot)$ is a (classical) left  brace.
%To %verify that $(S, +, \odot)$ is a skew left brace observe that for $f(x),
%g(x%)\in S$, \[(f\odot (g+h))(x)=f(x)+g(x+f(x)) +h(x+f(x))=(f\odot
%g)(%x)-f(x)+(f\odot h)(x).\] \end{example}
%   %
%\begin{example} Let notation be as in Example \ref{pierwszy}, denote
%$S_{1}\subseteq S$ be the set of functions $f(x)\in S$ such that $f(1)=1$,
%then $(S_{1}, +, \odot)$ is a left brace (where $0$ is the zero element in the
%ring $A$).  Moreover, denote $S_{2}\subseteq S$ be the set of functions
%$f(x)\in S$ such that  $f(0)=0$ and $f(1)=1$, then $(S_{1}, +, \odot)$ is a
%left brace. Notice also, that it is possible to generalise the construction
%from Example \ref{pierwszy} to polynomial rings in many variables,
%\end{example}

%\begin{rem} The construction of Lemma~\ref{lem:aga} works when $A$ and $N$ are
%noncommutative associative algebras, where instead of the polynomial ring
%$A[X]$ we take the noncommutative (associative) polynomial ring, where the
%variable $x$ doesn't commute with elements from $A$. Moreover, since two
%functions from $S$ are equal if they have equal values, then $S$ is finite
%provided that $A$ is finite.  \end{rem}

\subsection{Matched pair of skew braces} \label{subsection:matched_braces}

The construction of matched pair of braces was first considered by
Bachiller~\cite{Bachiller2} for classical braces. 

\begin{defn}			
	A pair of skew braces $(A,B)$ is said to be
	\emph{matched} if there are group homomorphisms $\alpha\colon
	(A,\circ)\to\Aut(B,\cdot)$ and $\beta\colon (B,\circ)\to\Aut(A,\cdot)$ such
	that \begin{align}
		\label{eq:matched_braces1}
		\lambda_a^A\beta_b&=\beta_{\alpha_a(b)}\lambda_{\beta^{-1}_{\alpha_a(b)}}^A(a),\\
		\label{eq:matched_braces2}
		\lambda_b^B\alpha_a&=\alpha_{\beta_b(a)}\lambda_{\alpha^{-1}_{\beta_b(a)}}^B(b),\quad 
		%\beta_b\lambda_a&=\lambda_{\beta_b(a)}\beta_{\alpha^{-1}_{\beta_b(a)}(b)},\\
		%\alpha_a\mu_b&=\mu_{\alpha_a(b)}\alpha_{\beta^{-1}_{\alpha_a(b)}(a)},\quad
		a\in A,b\in B, 
	\end{align} 
	where $\lambda^A$ is the map of $A$ and $\lambda^B$ is
	the map of $B$.  
\end{defn}

\begin{defn} Given a matched pair $(A,B,\alpha,\beta)$ of skew braces, define
  the \emph{biproduct} $A\bowtie B$ as the set of ordered pairs $(a,b)\in
  A\times B$ with the operations \begin{align}
    \label{eq:m_matched1}&(a,b)(a',b')=(aa',bb'),\\
    \label{eq:m_matched2}&(a,b)\circ(a',b') =(\beta_b(\beta^{-1}_b(a)\circ
    a'),\alpha_a(\alpha^{-1}_a(b)\circ b')).  \end{align} \end{defn}

\begin{pro} 
	\label{pro:matched_braces} 
	Given a matched pair
  $(A,B,\alpha,\beta)$ of skew braces, the biproduct $A\bowtie B$ is a skew
  brace.
%	Let $(A,B)$ be a matched pair of skew braces. Then $A\bowtie B$ is a
%	skew brace.
	%Then $A\times B$ is a skew brace with the operations \begin{align}
	%\label{eq:m_matched1}&(a,b)(a',b')=(aa',bb'),\\
	%\label{eq:m_matched2}&(a,b)\circ(a',b') =(\beta_b(\beta^{-1}_b(a)\circ
	%a'),\alpha_a(\alpha^{-1}_a(b)\circ b')).  \end{align} where $a,a'\in
	%A$ and $b,b'\in B$.
\end{pro}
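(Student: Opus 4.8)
The plan is to reduce everything to Lemma~\ref{lem:Bachiller} applied to the group $G=\Add{A}\times\Add{B}$, which by \eqref{eq:m_matched1} is exactly the additive group of $A\bowtie B$ (a direct product of groups, hence a group). First I would put the multiplicative operation \eqref{eq:m_matched2} into the normalized form that lemma requires. Using that $\beta_b\in\Aut\Add{A}$ and $\alpha_a\in\Aut\Add{B}$, together with $x\circ y=x\lambda^A_x(y)$ in $A$ and $x\circ y=x\lambda^B_x(y)$ in $B$, one computes
\[
(a,b)\circ(a',b')=(a,b)\cdot\Lambda_{(a,b)}(a',b'),\qquad
\Lambda_{(a,b)}:=\bigl(\beta_b\lambda^A_{\beta^{-1}_b(a)}\bigr)\times\bigl(\alpha_a\lambda^B_{\alpha^{-1}_a(b)}\bigr),
\]
where the automorphism on the right acts coordinatewise on $\Add{A}\times\Add{B}$. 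Since each coordinate is a composition of automorphisms of the corresponding additive group, $\Lambda_{(a,b)}\in\Aut(\Add{A}\times\Add{B})$, so $\Lambda\colon A\times B\to\Aut(\Add{A}\times\Add{B})$ is a well-defined map and it remains, by Lemma~\ref{lem:Bachiller}, to verify the identity $\Lambda_{(a,b)\cdot\Lambda_{(a,b)}(a',b')}=\Lambda_{(a,b)}\Lambda_{(a',b')}$ (this simultaneously gives that $(A\bowtie B,\circ)$ is a group).

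Write $u=\beta_b(\beta^{-1}_b(a)\circ a')$ and $v=\alpha_a(\alpha^{-1}_a(b)\circ b')$ for the two coordinates of $(a,b)\circ(a',b')$. Because $\Lambda$ acts coordinatewise, the cocycle identity splits into
\[
\beta_v\lambda^A_{\beta^{-1}_v(u)}=\beta_b\lambda^A_{\beta^{-1}_b(a)}\,\beta_{b'}\lambda^A_{\beta^{-1}_{b'}(a')},\qquad
\alpha_u\lambda^B_{\alpha^{-1}_u(v)}=\alpha_a\lambda^B_{\alpha^{-1}_a(b)}\,\alpha_{a'}\lambda^B_{\alpha^{-1}_{a'}(b')}.
\]
These two identities are interchanged by the symmetry $(A,\alpha,\lambda^A)\leftrightarrow(B,\beta,\lambda^B)$, which also interchanges \eqref{eq:matched_braces1} with \eqref{eq:matched_braces2}, so it suffices to establish the first one.

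To prove the first identity I would transform its right-hand side in three steps: apply \eqref{eq:matched_braces1} (with $a\mapsto\beta^{-1}_b(a)$, $b\mapsto b'$) to commute $\beta_{b'}$ past $\lambda^A_{\beta^{-1}_b(a)}$; merge the two resulting $\beta$'s via the fact that $\beta$ is a homomorphism $(B,\circ)\to\Aut\Add{A}$; and merge the two resulting $\lambda^A$'s via the fact that $\lambda^A$ is a homomorphism $(A,\circ)\to\Aut\Add{A}$. Matching against the left-hand side then reduces the claim to two elementary equalities: the equality $v=b\circ\alpha_{\beta^{-1}_b(a)}(b')$, which unwinds to $\alpha_a\lambda^B_{\alpha^{-1}_a(b)}=\lambda^B_b\alpha_{\beta^{-1}_b(a)}$ and is exactly \eqref{eq:matched_braces2} after substituting $a\mapsto\beta^{-1}_b(a)$; and the matching of the two $\lambda^A$-indices, which after cancelling $\beta$'s (again using that $\beta$ is a homomorphism) reduces once more to \eqref{eq:matched_braces1} with $a\mapsto\beta^{-1}_b(a)$. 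I expect the only real difficulty to be the bookkeeping — keeping straight which element is fed into which of $\alpha,\beta,\lambda^A,\lambda^B$ — and the axioms \eqref{eq:matched_braces1}–\eqref{eq:matched_braces2} are precisely what is needed to make every substitution line up; in particular no injectivity of $\alpha$ or $\beta$ is required.
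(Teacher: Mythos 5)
Your proposal is correct and follows essentially the same route as the paper: reduce to Lemma~\ref{lem:Bachiller}, read off the map $\lambda_{(a,b)}$ acting coordinatewise, and verify the cocycle identity in one coordinate (the other following by the $A\leftrightarrow B$ symmetry), using \eqref{eq:matched_braces1}--\eqref{eq:matched_braces2} to commute $\beta$'s past $\lambda^A$'s and the identity $v=b\circ\alpha_{\beta^{-1}_b(a)}(b')$ (the paper's \eqref{eq:alpha}) to merge the $\beta$'s. The only cosmetic difference is that you write $\lambda_{(a,b)}$ in the form $\beta_b\lambda^A_{\beta^{-1}_b(a)}\times\alpha_a\lambda^B_{\alpha^{-1}_a(b)}$ rather than the paper's equivalent $\lambda^A_a\beta_{\alpha^{-1}_a(b)}\times\lambda^B_b\alpha_{\beta^{-1}_b(a)}$.
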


\begin{proof} 
	We claim that
	\begin{align}
		\label{eq:alpha} \alpha_a(\alpha^{-1}_a(b)\circ y)=b\lambda^B_b\alpha_{\beta^{-1}_b(a)}(y)=b\circ \alpha_{\beta^{-1}_b(a)}(y),\\
		\label{eq:beta} \beta_b(\beta^{-1}_b(a)\circ x)=a\lambda^A_a\beta_{\alpha^{-1}_a(b)}(x)=a\circ \beta_{\alpha^{-1}_a(b)}(x).
	\end{align}
	We only prove~\eqref{eq:alpha}. Since $\alpha$ is a group homomorphism,
	using~\eqref{eq:m_matched1} one obtains that 
	\begin{align*}
		\alpha_a(\alpha^{-1}_a(b)\circ y)
		&=\alpha_a\left(\alpha^{-1}_a(b)\lambda^B_{\alpha^{-1}_a(b)}(y)\right)\\
		&=b\alpha_a\lambda^B_{\alpha^{-1}_a(b)}(y)
		=b\lambda^B_b\alpha_{\beta^{-1}_b(a)}(y)
		=b\circ \alpha_{\beta^{-1}_b(a)}(y).
	\end{align*}
	Then 
	\begin{align*}
		\lambda_{(a,b)}(a',b')
		%&=(a,b)^{-1}\left (a,b)\circ (a',b')\right)\\
		&=(\lambda_a^A\beta_{\alpha^{-1}_a(b)}(a'),\lambda_b^B\alpha_{\beta^{-1}_b(a)}(b')).
	\end{align*}
	A direct calculation shows that $\lambda_{(a,b)}\in\Aut(A\times B)$ for all
	$a\in A$ and $b\in B$. Thus 
	%is invertible and satisfies
	%\[
	%\lambda_{(a,b)}(a'a'',b'b'')=\lambda_{(a,b)}(a',b')\lambda_{(a,b)}(a'',b'')
	%\]
	%for all $a,a',a''\in A$ and $b,b',b''\in B$. 
	by Lemma~\ref{lem:Bachiller} it suffices to prove that 
	\[
		\lambda_{(a,b)}\lambda_{(x,y)}(a',b')=\lambda_{(a,b)\circ (x,y)}(a',b').
	\]
	This is equivalent to prove the following two equalities:
	\begin{align}
		\label{eq:m1}\beta_b\lambda^A_{\beta^{-1}_b(a)}\beta_y\lambda^A_{\beta^{-1}_y(x)}(a') &= \beta_{\alpha_a(\alpha^{-1}_a(b)\circ x)}\lambda^A_{\beta^{-1}_{\alpha_a(\alpha^{-1}_a(b)\circ y)}\beta_b(\beta^{-1}_b(a)\circ x)}(a'),\\
		\label{eq:m2}\alpha_a\lambda^B_{\alpha^{-1}_a(b)}\alpha_x\lambda^B_{\alpha^{-1}_x(y)}(b') &= \alpha_{\beta_b(\beta^{-1}_b(a)\circ y)}\lambda^B_{\alpha^{-1}_{\beta_b(\beta^{-1}_b(a)\circ y)}\alpha_a(\alpha^{-1}_a(b)\circ y)}(b').
	\end{align}
	Let us prove~\eqref{eq:m1}. 
	Let $a''=\beta^{-1}_b(a)$ and $b''=y$. We first observe that
%	\begin{equation}
%		\label{eq:auxiliar}
%		\beta^{-1}_{\alpha_a(\alpha^{-1}_a(b)\circ y)}\beta_b(\beta^{-1}_b(a)\circ x) 
%		=\beta^{-1}_{\alpha_{a''}(b'')}(a'')\circ \beta^{-1}_b(x)
%	\end{equation}
%	since 
	\begin{align*}
		\beta^{-1}_{\alpha_a(\alpha^{-1}_a(b)\circ y)}\beta_b(\beta^{-1}_b(a)\circ x) 
		&= \beta^{-1}_{b\circ \alpha_{a''}(b'')}\beta_b(a''\circ x)\\
		&= \beta^{-1}_{\alpha_{a''}(b'')}\beta^{-1}_b(\beta_b(a''\circ x))\\
		&= \beta^{-1}_{\alpha_{a''}(b'')}(a''\circ x)\\
		&= \beta^{-1}_{\alpha_{a''}(b'')}(a''\lambda^A_{a''}(x))\\
		&= \beta^{-1}_{\alpha_{a''}(b'')}(a'')\beta^{-1}_{\alpha_{a''}(b'')}\lambda^A_{a''}(x)\\
		&= \beta^{-1}_{\alpha_{a''}(b'')}(a'')\lambda_{\beta^{-1}_{\alpha_{a''}(b'')}(a'')}\beta^{-1}_{b''}(x)\\
		&= \beta^{-1}_{\alpha_{a''}(b'')}(a'')\circ \beta^{-1}_{b''}(x).
	\end{align*}
	This equality and~\eqref{eq:alpha} imply that 
	\begin{align*}
		\beta_b\lambda^A_{\beta^{-1}_b(a)}\beta_y\lambda^A_{\beta^{-1}_y(x)}(a')&=\beta_b\beta_{\alpha_{a''}(b'')}\lambda^A_{\beta^{-1}_{\alpha_{a''}(b'')}(a'')}\lambda^A_{\beta^{-1}_{b''}(x)}(a')\\
		&=\beta_{b\circ \alpha_{a''}(b'')}\lambda^A_{\beta^{-1}_{\alpha_{a''}(b'')}(a'')\circ \beta^{-1}_{b''}(x)}(a')\\
		&=\beta_{b\circ \alpha_{a''}(b'')}\lambda^A_{\beta^{-1}_{\alpha_{a''}(b'')}(a'')\circ \beta^{-1}_{b''}(x)}(a')\\
		&=\beta_{b\circ \alpha_{a''}(b'')}\lambda^A_{\beta^{-1}_{\alpha_a(\alpha^{-1}_a(b)\circ y)}\beta_b(\beta^{-1}_b(a)\circ x)}(a')\\
		&=\beta_{\alpha_a(\alpha^{-1}_a(b)\circ y)}\lambda^A_{\beta^{-1}_{\alpha_a(\alpha^{-1}_a(b)\circ y)}\beta_b(\beta^{-1}_b(a)\circ x)}(a').
	\end{align*}
	The proof of~\eqref{eq:m2} is similar. 
%	By Lemma~\ref{lem:matched} one only needs to prove that the
%  compatibility between~\eqref{eq:m_matched1} and~\eqref{eq:m_matched2} holds.
%  This follows inmediately from Lemma~\ref{lem:4pro}.  
  \end{proof}

%\subsection{Direct and semidirect product} 
%
%The \emph{direct product} of skew braces is defined by the following
%proposition:
%
%\begin{pro} \label{pro:times} Let $A$ and $B$ be skew braces. Then $A\times B$
%with \[ (a,b)(a',b')=(aa',bb'),\quad (a,b)\circ (a',b')=(a\circ a',b\circ b'),
%\quad \] is a skew brace.  \end{pro}

\begin{defn} Let $A$ and $X$ be skew braces. A \emph{left action} of $A$ on $X$
  is a group homomorphism $(A,\circ)\to\Aut_{\mathrm{B}}(X)$, where
  $\Aut_{\mathrm{B}}(X)$ denotes the group of brace automorphisms of $X$.
\end{defn}

An easy consequence of Proposition~\ref{pro:matched_braces} is the construction
of semidirect product of skew braces.  Semidirect products of classical braces
were considered by Rump~\cite{MR2442072}.  

\begin{cor} \label{cor:ltimes} Let $A$ and $B$ be skew braces. Assume that
  there is a left action $\alpha$ of $A$ on $B$. Then $A\times B$ with the
  operations \[ (a,b)(a',b')=(aa',bb'),\quad (a,b)\circ (a',b')=(a\circ
    a',b\circ \alpha_a(b')), \quad
		%a,a'\in A,\;b,b'\in B,
	\] is a skew brace. This skew brace structure over $A\times B$ will be
	denoted by $A\ltimes B$.  \end{cor}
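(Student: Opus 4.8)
The plan is to realise $A\ltimes B$ as the biproduct of a matched pair of skew braces in the sense of Subsection~\ref{subsection:matched_braces} and then quote Proposition~\ref{pro:matched_braces}. Let $\alpha\colon\Mul{A}\to\Aut_{\mathrm{B}}(B)$ be the given left action, and let $\beta\colon\Mul{B}\to\Aut\Add{A}$ be the trivial homomorphism, $\beta_b=\id$ for all $b\in B$. Since each $\alpha_a$ is a brace automorphism it is in particular an automorphism of $\Add{B}$, so $(A,B,\alpha,\beta)$ is a candidate matched pair; it remains to verify~\eqref{eq:matched_braces1} and~\eqref{eq:matched_braces2}.

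Because $\beta$ is trivial, $\beta^{-1}_{\alpha_a(b)}(a)=a$ and $\beta_{\alpha_a(b)}=\id$, so the right-hand side of~\eqref{eq:matched_braces1} is just $\lambda^A_a$, which equals the left-hand side $\lambda^A_a\beta_b=\lambda^A_a$; thus~\eqref{eq:matched_braces1} holds trivially. For~\eqref{eq:matched_braces2}, using $\beta_b(a)=a$ the required identity reduces to $\lambda^B_b\alpha_a=\alpha_a\lambda^B_{\alpha^{-1}_a(b)}$. Here I would use that $\alpha_a$ preserves both $\cdot$ and $\circ$: for $c\in B$, writing $b'=\alpha^{-1}_a(b)$ and recalling $\lambda^B_{b'}(c)=(b')^{-1}(b'\circ c)$, one gets $\alpha_a\lambda^B_{b'}(c)=\alpha_a((b')^{-1})\,\alpha_a(b'\circ c)=(\alpha_a(b'))^{-1}(\alpha_a(b')\circ\alpha_a(c))=b^{-1}(b\circ\alpha_a(c))=\lambda^B_b(\alpha_a(c))$, since $\alpha_a(b')=b$. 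Hence $(A,B,\alpha,\beta)$ is a matched pair of skew braces.

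Finally I would specialise the biproduct operations~\eqref{eq:m_matched1}--\eqref{eq:m_matched2} to this matched pair. The additive operation~\eqref{eq:m_matched1} is already $(a,b)(a',b')=(aa',bb')$. In~\eqref{eq:m_matched2}, the first coordinate $\beta_b(\beta^{-1}_b(a)\circ a')$ collapses to $a\circ a'$ because $\beta$ is trivial, while the second coordinate $\alpha_a(\alpha^{-1}_a(b)\circ b')$ equals $\alpha_a(\alpha^{-1}_a(b))\circ\alpha_a(b')=b\circ\alpha_a(b')$ because $\alpha_a$ preserves $\circ$. So $(a,b)\circ(a',b')=(a\circ a',\,b\circ\alpha_a(b'))$, exactly the operation in the statement, and $A\ltimes B=A\bowtie B$ is a skew brace by Proposition~\ref{pro:matched_braces}.

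The only delicate point is the verification of~\eqref{eq:matched_braces2}: it is here that one genuinely uses that $\alpha$ is an action by \emph{brace} automorphisms of $B$, not merely additive ones, and that $\alpha$ is a homomorphism out of $\Mul{A}$; with only an additive action the identity $\alpha_a(\alpha^{-1}_a(b)\circ b')=b\circ\alpha_a(b')$ would fail and the proposed $\circ$ need not even be associative. Everything else is a direct substitution, which is why the result is an easy corollary of Proposition~\ref{pro:matched_braces}.
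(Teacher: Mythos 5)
Your proof is correct and takes essentially the same route the paper intends: the corollary is stated there as an easy consequence of Proposition~\ref{pro:matched_braces}, obtained precisely by taking $\beta$ trivial. Your verification of~\eqref{eq:matched_braces1} and~\eqref{eq:matched_braces2} (the latter using that each $\alpha_a$ preserves both operations of $B$) and the specialization of the biproduct operations supply exactly the details the paper omits.
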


%\begin{proof} Let $\lambda$ be the structure map of $A$ and $\mu$ be the
%structure map of $B$.  For $b\in B$ let $\beta_b=\id$.
%Clearly~\eqref{eq:matched_braces1} holds. Equality~\eqref{eq:matched_braces2}
%follows from the fact that each $\alpha_a$ is a brace homomorphism.  Now the
%claim follows from Proposition~\ref{pro:matched_braces}.  %It suffices to
%prove that~\eqref{eq:compatibility} holds. For %$i\in\{1,2,3\}$ let
%$(a_i,b_i)\in A\times B$. One the one hand, %\begin{align*} %
%(a_1,b_1)\circ((a_2,b_2)(a_3,b_3)) &= (a_1,b_1)\circ (a_2a_3,b_2b_3) %
%=(a_1\circ (a_2a_3),b_1\circ\alpha_{a_1}(b_2b_3)).  %\end{align*} %On the
%other, %\begin{align*} %
%((a_1,b_1)\circ(&a_2,b_2))(a_1,b_1)^{-1}((a_1,b_1)\circ (a_3,b_3))\\ %	&=
%(a_1\circ a_2,b_1\circ\alpha_{a_1}(b_2))(a_1^{-1},b_1^{-1})(a_1\circ
%a_3,b_1\circ \alpha_{a_1}(b_3))\\ %	&= ( (a_1\circ a_2)a_1^{-1}(a_1\circ
%a_3),(b_1\circ\alpha_{a_1}(b_2))b_1^{-1}(b_1\circ \alpha_{a_1}(b_3)))\\ %
%&= ( a_1\circ (a_2a_3),b_1\circ (\alpha_{a_1}(b_2)\alpha_{a_1}(b_3)))\\ %
%&= ( a_1\circ (a_2a_3),b_1\circ \alpha_{a_1}(b_2b_3)).  %\end{align*} %This
%completes the proof.  \end{proof}

\begin{cor} \label{cor:rtimes} Let $A$ and $B$ be skew braces. Assume that
  there is a left action $\beta$ of $B$ on $A$. Then $A\times B$ with the
  operations \[ (a,b)(a',b')=(aa',bb'),\quad (a,b)\circ
    (a',b')=(a\circ\beta_b(a'),b\circ b'),
		%a,a'\in A,\;b,b'\in B,
	\] is a skew brace.  \end{cor}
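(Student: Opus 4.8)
The plan is to obtain Corollary~\ref{cor:rtimes} as the ``dual'' specialization of Proposition~\ref{pro:matched_braces}, mirroring the way Corollary~\ref{cor:ltimes} is the specialization in which $\beta$ is trivial. Concretely, I would take the matched pair $(A,B,\alpha,\beta)$ in which $\alpha_a=\id_B$ for every $a\in A$ and $\beta$ is the given left action of $B$ on $A$. Since a left action of skew braces is by definition a group homomorphism $(B,\circ)\to\Aut_{\mathrm{B}}(A)$, and a brace automorphism is in particular an automorphism of $\Add{A}$, the composite $\beta\colon(B,\circ)\to\Aut\Add{A}$ is a group homomorphism; and $\alpha\colon(A,\circ)\to\Aut\Add{B}$ is the constant homomorphism, hence trivially a homomorphism. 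So the only thing to check before invoking Proposition~\ref{pro:matched_braces} is that conditions \eqref{eq:matched_braces1} and \eqref{eq:matched_braces2} hold for this pair.

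The one genuinely computational point — and the step I regard as the ``hard part'', although it is very short — is the identity
\[
\varphi\,\lambda^C_c\,\varphi^{-1}=\lambda^C_{\varphi(c)},
\]
valid for any skew brace $C$, any $c\in C$, and any brace automorphism $\varphi$ of $C$; it follows at once from $\lambda^C_c(x)=c^{-1}(c\circ x)$ together with the facts that $\varphi$ preserves $\cdot$ and $\circ$. Applying it with $\varphi=\beta_b$ and replacing $c$ by $\beta^{-1}_b(a)$ yields $\lambda^A_a\beta_b=\beta_b\lambda^A_{\beta^{-1}_b(a)}$, which is exactly \eqref{eq:matched_braces1} since $\alpha_a(b)=b$; and \eqref{eq:matched_braces2} degenerates to the tautology $\lambda^B_b=\lambda^B_b$. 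Hence $(A,B,\alpha,\beta)$ is a matched pair of skew braces, and Proposition~\ref{pro:matched_braces} gives that the biproduct $A\bowtie B$ is a skew brace. (The companion Corollary~\ref{cor:ltimes} is handled the same way, swapping the roles of $\alpha$ and $\beta$, and rests on the same automorphism identity applied inside $B$.)

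It then only remains to rewrite the biproduct operations \eqref{eq:m_matched1}--\eqref{eq:m_matched2} in the present case. The additive law \eqref{eq:m_matched1} is already $(a,b)(a',b')=(aa',bb')$. In \eqref{eq:m_matched2} the second coordinate is $\alpha_a(\alpha^{-1}_a(b)\circ b')=b\circ b'$ because $\alpha$ is trivial, while the first coordinate is $\beta_b(\beta^{-1}_b(a)\circ a')=\beta_b(\beta^{-1}_b(a))\circ\beta_b(a')=a\circ\beta_b(a')$ because $\beta_b$ respects $\circ$. This is precisely the operation asserted in the statement, so the corollary follows. I do not anticipate any further obstacle: all of the structural work is contained in Proposition~\ref{pro:matched_braces}, and what is left is the routine specialization together with the one-line automorphism identity above.
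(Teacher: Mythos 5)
Your proof is correct and follows the same route the paper intends: both Corollary~\ref{cor:ltimes} and Corollary~\ref{cor:rtimes} are stated as specializations of Proposition~\ref{pro:matched_braces} with one of the two actions trivial, which is exactly your construction. Your verification of conditions~\eqref{eq:matched_braces1}--\eqref{eq:matched_braces2} via the conjugation identity $\varphi\lambda^C_c\varphi^{-1}=\lambda^C_{\varphi(c)}$ for brace automorphisms, and the simplification of~\eqref{eq:m_matched2} using that $\beta_b$ preserves $\circ$, just makes explicit the ``easy consequence'' the paper leaves to the reader.
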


%\begin{proof} It is similar to the proof of Proposition~\ref{pro:ltimes}.
%\end{proof}

\begin{cor} \label{cor:double} Let $A$ be a skew brace such that
  \begin{equation} \label{eq:double}
    \lambda_a\lambda_b=\lambda_{\lambda_a(b)}\lambda_a,\quad a,b\in A.
  \end{equation} Then $D(A)=A\ltimes A$ is a skew brace. The skew brace $D(A)$
  will be called the \emph{double} of $A$.  \end{cor}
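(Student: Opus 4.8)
The plan is to recognize $D(A)=A\ltimes A$ as a special case of the semidirect product of Corollary~\ref{cor:ltimes}, taking for the left action the map $\lambda$ of Lemma~\ref{lem:lambda}. Everything then reduces to verifying that $\lambda$ really is a left action of $A$ on itself in the sense of the definition preceding Corollary~\ref{cor:ltimes}; that is, (i) that each $\lambda_a$ is not merely an automorphism of $\Add{A}$ but a \emph{brace} automorphism of $A$, and (ii) that $a\mapsto\lambda_a$ is a group homomorphism $(A,\circ)\to\Aut_{\mathrm{B}}(A)$.

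For (i), Lemma~\ref{lem:lambda} already gives $\lambda_a\in\Aut\Add{A}$, so it remains only to see that $\lambda_a(b\circ c)=\lambda_a(b)\circ\lambda_a(c)$ for all $b,c\in A$. Writing $b\circ c=b\lambda_b(c)$ and using that $\lambda_a$ is a homomorphism of $\Add{A}$, the left-hand side is $\lambda_a(b)\,\lambda_a\lambda_b(c)$, while the right-hand side is $\lambda_a(b)\,\lambda_{\lambda_a(b)}\lambda_a(c)$. Since left multiplication by $\lambda_a(b)$ is injective and $c$ is arbitrary, the identity $\lambda_a(b\circ c)=\lambda_a(b)\circ\lambda_a(c)$ holds (for the given $a,b$, and all $c$) exactly when $\lambda_a\lambda_b=\lambda_{\lambda_a(b)}\lambda_a$; quantifying over all $a,b$ this is precisely hypothesis~\eqref{eq:double}. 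As $\lambda_a$ is bijective, it is therefore a brace automorphism of $A$.

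For (ii), Lemma~\ref{lem:lambda} gives that $\lambda\colon\Mul{A}\to\Aut\Add{A}$ is a group homomorphism, and by (i) its image lies in the subgroup $\Aut_{\mathrm{B}}(A)$, so $\lambda\colon(A,\circ)\to\Aut_{\mathrm{B}}(A)$ is a group homomorphism, i.e.\ a left action of $A$ on itself. Applying Corollary~\ref{cor:ltimes} with $B=A$ and $\alpha=\lambda$ shows that $A\times A$ with $(a,b)(a',b')=(aa',bb')$ and $(a,b)\circ(a',b')=(a\circ a',\,b\circ\lambda_a(b'))$ is a skew brace, and this is by definition $D(A)=A\ltimes A$.

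I do not expect any serious obstacle here: the whole content is the one-line computation in step (i) identifying~\eqref{eq:double} with the statement that every $\lambda_a$ preserves $\circ$, after which the result follows formally from Lemma~\ref{lem:lambda} and Corollary~\ref{cor:ltimes}. The only point deserving a moment's care is to run that equivalence in the direction actually needed, namely that~\eqref{eq:double}, quantified over \emph{all} $a,b$, yields $\lambda_a(b\circ c)=\lambda_a(b)\circ\lambda_a(c)$ for \emph{all} $a,b,c$ — which it does, since $c$ is free and $\lambda_b$ ranges over the image of $\lambda$ as $b$ varies.
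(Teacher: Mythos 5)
Your proof is correct and takes essentially the same route as the paper: the paper's proof just observes that $A$ acts on $A$ (via $\lambda$) if and only if~\eqref{eq:double} holds and then invokes Corollary~\ref{cor:ltimes}, which is exactly the reduction you carry out, with the equivalence between~\eqref{eq:double} and each $\lambda_a$ preserving $\circ$ spelled out explicitly.
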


\begin{proof} The brace $A$ acts on $A$ if and only if~\eqref{eq:double} holds.
  Thus the claim follows from Corollary~\ref{cor:ltimes}.  \end{proof}

Wreath products of classical braces were considered in~\cite[Corollary
3.5]{MR2584610}. The construction also works for skew braces: 

\begin{cor} \label{cor:wreath} 
	Let $A$ be a skew brace. Let $n\in\N$ and $B$ be skew brace such that
	$\Mul{B}\subseteq\Sym_n$. Then the wreath product $A\wr B=A^{\times
	n}\rtimes B$ is a skew brace.  
\end{cor}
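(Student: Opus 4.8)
The plan is to realize the wreath product $A\wr B = A^{\times n}\rtimes B$ as a skew brace by exhibiting it explicitly as a semidirect product of skew braces in the sense of Corollary~\ref{cor:ltimes}, where the "outer" factor is $B$ and the "inner" factor is the direct product $A^{\times n}$ (which is a skew brace by iterating Example~\ref{exa:times}). The key point is to manufacture a left action $\alpha$ of $B$ on $A^{\times n}$, i.e.\ a group homomorphism $\Mul{B}\to\Aut_{\mathrm{B}}(A^{\times n})$, from the given embedding $\Mul{B}\hookrightarrow\Sym_n$. First I would set, for $\tau\in B$ and $(a_1,\dots,a_n)\in A^{\times n}$,
\[
\alpha_\tau(a_1,\dots,a_n)=(a_{\tau^{-1}(1)},\dots,a_{\tau^{-1}(n)}),
\]
the permutation action on coordinates. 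One checks immediately that each $\alpha_\tau$ respects both the coordinatewise $\cdot$ and the coordinatewise $\circ$ on $A^{\times n}$ (because it just permutes the slots), so $\alpha_\tau\in\Aut_{\mathrm B}(A^{\times n})$; and that $\tau\mapsto\alpha_\tau$ is a homomorphism from $\Sym_n$, hence from $\Mul{B}$. Thus $\alpha$ is a left action of the skew brace $B$ on the skew brace $A^{\times n}$ in the required sense.

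Having this, Corollary~\ref{cor:ltimes} (applied with the roles "$A$"$\,\leftarrow B$ and "$B$"$\,\leftarrow A^{\times n}$) yields that the set $B\times A^{\times n}$, with
\[
(\tau,\underline a)(\tau',\underline a')=(\tau\tau',\underline a\,\underline a'),\qquad
(\tau,\underline a)\circ(\tau',\underline a')=(\tau\circ\tau',\underline a\circ\alpha_\tau(\underline a')),
\]
is a skew brace. It remains only to observe that this skew brace is (isomorphic to) what is conventionally written $A\wr B=A^{\times n}\rtimes B$: its additive group is $A^{\times n}\times B$ as a direct product — but this is fine, since in a wreath product the base group $A^{\times n}$ is acted on only by the top group via its $\circ$‑operation, and on the additive side there is no twisting — while its multiplicative group is exactly the semidirect product $\Mul{A}^{\times n}\rtimes\Mul{B}$ with $\Mul{B}$ acting by permuting the $n$ copies of $\Mul{A}$, which is the usual wreath product of the multiplicative groups. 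One should flag the (harmless) convention issue that "$A\wr B$" here names the skew brace whose multiplicative group is the group‑theoretic wreath product $\Mul{A}\wr\Mul{B}$ and whose additive group is the plain direct product $\Add{A}^{\times n}\times\Add{B}$.

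I do not expect a serious obstacle: the only thing to be careful about is the direction of the action and of the semidirect product (whether $\alpha_\tau$ uses $\tau$ or $\tau^{-1}$, and whether $\circ$‑multiplication twists the first or the second factor), so that the homomorphism property $\alpha_{\tau\circ\tau'}=\alpha_\tau\alpha_{\tau'}$ holds with the conventions of Corollary~\ref{cor:ltimes}; this is purely bookkeeping. If one prefers to avoid even invoking Corollary~\ref{cor:ltimes}, an equivalent route is to verify the compatibility condition~\eqref{eq:compatibility} directly on $B\times A^{\times n}$ using that it holds in $B$ and coordinatewise in $A^{\times n}$ together with the fact that each $\alpha_\tau$ is a brace automorphism — but the cleanest write‑up is simply to cite Corollary~\ref{cor:ltimes} after checking that $\alpha$ is a left action.
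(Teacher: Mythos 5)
Your proof is correct and is essentially the paper's argument: build $A^{\times n}$ as a skew brace via Example~\ref{exa:times}, let $\Mul{B}\subseteq\Sym_n$ act by permuting coordinates (which clearly gives brace automorphisms), and invoke the semidirect-product construction. The only cosmetic difference is that the paper applies Corollary~\ref{cor:rtimes} (acting factor in the second slot, matching the notation $A^{\times n}\rtimes B$) while you apply Corollary~\ref{cor:ltimes} with the factors swapped; your use of $\tau^{-1}$ in the coordinate permutation is the right bookkeeping for a left action.
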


\begin{proof} According to Example~\ref{exa:times}, $A^{\times n}=A\times\cdots\times A$
  ($n$-times) is a skew brace. Let $\delta\colon B\to\Aut_{\mathrm{B}}(A^n)$,
  $b\mapsto\delta_b$, where \[
  \delta_b(a_1,\dots,a_n)=(a_{b(1)},\dots,a_{b(n)}).  \] Then $B$ acts on $A^n$
  and hence then claim follows from Corollary~\ref{cor:rtimes}.  \end{proof}

\section{Solutions of the Yang--Baxter equation} \label{YB}

Skew braces produce non-degenerate solution of the YBE. 

\begin{thm}{\cite[Theorem 3.1]{MR3647970}} \label{thm:YB} Let $A$ be a skew
  left brace. Then 
  \begin{align*} 
	  \label{eq:braiding_operator} &r_A\colon
    A\times A\to A\times A, \\
    &r_A(a,b)=(\lambda_a(b),\mu_b(a))=
	(\lambda_a(b),\lambda^{-1}_{\lambda_a(b)}( (a\circ b)^{-1}a(a\circ b)), 
	\end{align*} is a non-degenerate solution of the Yang--Baxter
  equation. Furthermore, $r_A$ is involutive if and only if $ab=ba$ for all
  $a,b\in A$.  \end{thm}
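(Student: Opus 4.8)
The plan is to verify directly that $r_A$ satisfies the set-theoretic Yang--Baxter equation $(r_A\times\id)(\id\times r_A)(r_A\times\id)=(\id\times r_A)(r_A\times\id)(\id\times r_A)$ by tracking the image of a triple $(a,b,c)$ under both sides. Writing $r_A(a,b)=(\sigma_a(b),\tau_b(a))$ with $\sigma_a=\lambda_a$ and $\tau_b(a)=\mu_b(a)$, it is a standard fact (going back to \cite{MR1722951,MR1809284}) that the YBE for such an $r$ is equivalent to the three ``braid'' conditions
\begin{align*}
&\sigma_a\sigma_b=\sigma_{\sigma_a(b)}\sigma_{\tau_b(a)},\\
&\tau_c\tau_b=\tau_{\tau_c(b)}\tau_{\sigma_b(c)},\\
&\sigma_{\tau_{\sigma_b(c)}(a)}\tau_c(b)=\tau_{\sigma_{\tau_b(a)}(c)}\sigma_a(b),
\end{align*}
so it suffices to check these three identities. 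The first would follow from Lemma~\ref{lem:lambda} together with the compatibility law: since $\lambda$ is a homomorphism $\Mul{A}\to\Aut\Add{A}$, one has $\lambda_a\lambda_b=\lambda_{a\circ b}$, and one checks $\sigma_a(b)\circ\tau_b(a)=\lambda_a(b)\circ\mu_b(a)=a\circ b$ directly from the definition of $\mu$ in Lemma~\ref{lem:mu}; hence $\sigma_{\sigma_a(b)}\sigma_{\tau_b(a)}=\lambda_{\lambda_a(b)\circ\mu_b(a)}=\lambda_{a\circ b}=\lambda_a\lambda_b$. The second identity is the ``dual'' statement and is exactly the content of Lemma~\ref{lem:mu}, namely $\mu_1=\id$ and $\mu_{a\circ b}=\mu_b\mu_a$, once one also records that $\sigma_b(c)\circ$-composes correctly; here the key bookkeeping is again $\lambda_b(c)\circ\mu_c(b)=b\circ c$.

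The third (``mixed'') identity is where the real work lies, and I expect it to be the main obstacle. The strategy is to reduce everything to statements inside the crossed group $\Gamma(A)=\Add{A}\rtimes\Mul{A}$ of Definition~\ref{def:structure}, using the observation that $r_A$ is essentially conjugation data in $\Gamma(A)$: one verifies that in $\Gamma(A)$ one has $(a,a)(b,b)=(a\circ b,\,a\circ b)$ along the diagonal and that $\mu_b(a)$ is precisely the ``second component'' obtained when one rewrites $(a,a)(b,b)$ back in the form (diagonal element)$\cdot$(element of $1\times\Mul{A}$), exactly as in the proof of Theorem~\ref{thm:brace2T}. Concretely, both composites in the YBE can be read off from the single associative product $(a,a)(b,b)(c,c)$ in $\Gamma(A)$ computed in two bracketings; associativity of $\Gamma(A)$ then delivers the mixed identity after expanding the first and third components via $\lambda$ and $\mu$. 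Alternatively, one can grind the mixed identity out by hand: expand $\tau_{\sigma_{\tau_b(a)}(c)}\sigma_a(b)$ using the formula $\mu_y(x)=\overline{\lambda_x(y)}\circ x\circ y$, repeatedly apply Lemma~\ref{lem:lambda} to collapse products of $\lambda$'s, and use the compatibility condition~\eqref{eq:compatibility} to match the two sides; the calculation is long but mechanical.

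Finally, for the ``furthermore'' clause: $r_A$ is involutive iff $r_A^2=\id$, i.e. iff $\sigma_{\sigma_a(b)}(\tau_b(a))=a$ and $\tau_{\tau_b(a)}(\sigma_a(b))=b$ for all $a,b$. Using $\sigma_a(b)\circ\tau_b(a)=a\circ b$ and $\sigma_a=\lambda_a$, the first condition reads $\lambda_{\lambda_a(b)}(\mu_b(a))=a$; substituting $\mu_b(a)=\overline{\lambda_a(b)}\circ a\circ b$ and using $\lambda_{\lambda_a(b)}=\lambda^{-1}_{a^{-1}}\cdots$—more cleanly, one shows the pair of involutivity conditions is equivalent to $\lambda_a(b)\circ\mu_b(a)=b\circ a$ as well, hence to $a\circ b=b\circ a$, and then unwinds $a\circ b=a\lambda_a(b)$, $b\circ a=b\lambda_b(a)$ to see this forces $ab=ba$ (take $b$ ranging so that $\lambda_a$ becomes determined, or pass to the abelianization argument). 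Conversely if $\Add{A}$ is abelian and $ab=ba$ throughout—equivalently $A$ is a classical brace with trivial... rather, if $ab=ba$ for all $a,b$ then $\Add{A}$ is abelian, $\lambda_a(b)=b$ forces nothing by itself, so one checks directly from~\eqref{eq:compatibility} that $a\circ b=b\circ a$ and then that $r_A$ coincides with Rump's involutive solution $r(a,b)=(ab-a,(ab-a)^{-1}ab)$, which is involutive. I would isolate the clean reformulation ``$r_A$ involutive $\iff a\circ b=b\circ a$ for all $a,b$ $\iff ab=ba$ for all $a,b$'' as a short lemma and prove the three equivalences in turn.
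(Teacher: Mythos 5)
Your overall strategy -- reduce the braid relation to the three standard conditions on $(\sigma,\tau)$ and dispatch the first two via Lemma~\ref{lem:lambda}, Lemma~\ref{lem:mu} and the identity $\lambda_a(b)\circ\mu_b(a)=a\circ b$ -- is sound, and it is in fact how the cited source proves this (the present paper gives no proof, only the citation). But the third, mixed condition, which you yourself flag as ``where the real work lies,'' is never actually established, and the $\Gamma(A)$ shortcut you describe does not work as stated: $(a,a)(b,b)=(a\circ b,a\circ b)$ stays on the diagonal, so rewriting it as $(\text{diagonal})\cdot(1\times\Mul{A})$ yields $(a\circ b,a\circ b)(1,1)$ and never produces $\mu_b(a)$. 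The factorization that actually encodes $r_A$ is $(1,a)(b,b)=(\lambda_a(b),\lambda_a(b))(1,\mu_b(a))$, i.e.\ the exchange $MB=BM$ inside $\Gamma(A)$, and extracting the full YBE from it is not ``bare associativity of $(a,a)(b,b)(c,c)$'': one needs the matched-pair identities \eqref{eq:matched1brace}--\eqref{eq:matched2brace} of Lemma~\ref{lem:brace2matched} together with $a\circ b=\lambda_a(b)\circ\mu_b(a)$ (the Lu--Yan--Zhu braiding-operator argument). So either prove those identities (as the paper does, with genuine computation, in Section~\ref{matched_pair_of_groups}) or grind out the mixed identity directly; as written there is a gap exactly at the hard step. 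Non-degeneracy is also never addressed, though it does follow from the cited lemmas since $\lambda_a\in\Aut\Add{A}$ and $\mu_b\in\Sym_A$.

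The involutivity argument is not just incomplete but wrong. The ``clean reformulation'' you propose to isolate as a lemma -- $r_A$ involutive $\iff a\circ b=b\circ a$ for all $a,b$ $\iff ab=ba$ for all $a,b$ -- is false, and the paper itself contains counterexamples: Example~\ref{exa:s3c6} has $\Mul{A}\simeq C_6$ abelian but $\Add{A}\simeq\Sym_3$ non-abelian (so $a\circ b=b\circ a$ always while $ab\ne ba$ sometimes), and any classical brace with non-abelian multiplicative group fails the implication in the other direction; indeed the whole point of classical braces is that they give involutive solutions without $\circ$ being commutative. The correct argument is short: writing $(u,v)=r_A(a,b)$ one has $u\circ v=a\circ b$ (Remark~\ref{rem:trick}), so $r_A^2=\id$ iff $\lambda_u(v)=a$ for all $a,b$; by the displayed formula $\lambda_u(v)=(a\circ b)^{-1}a(a\circ b)$, so involutivity means $a$ commutes in $\Add{A}$ with $a\circ b=a\lambda_a(b)$, hence with $\lambda_a(b)$ for all $b$, and bijectivity of $\lambda_a$ gives $ab=ba$ for all $a,b$; the converse is immediate. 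You should replace your proposed lemma with this computation.
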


\begin{rem} \label{rem:trick} Let $A$ be a skew brace and $r_A$ its associated
  solution. If one writes $r(a,b)=(u,v)$, then $a\circ b=u\circ v$ since \[
    \lambda^{-1}_{\lambda_a(b)}( (a\circ b)^{-1}a(a\circ
    b))=\overline{\lambda_a(b)}\circ a\circ b \] for all $a,b\in A$.  \end{rem}

A \emph{biquandle} is a non-degenerate set-theoretical solution $(X,r)$ of the
YBE such that there exists a bijection $t\colon X\to X$ such that
$r(t(x),x)=(t(x),x)$ for all $x\in X$.  Biquandles have applications in
classical and virtual knot theory, see for example~\cite{MR2542696}
and~\cite{MR2896084}.  

\begin{cor} \label{cor:biquandles} Let $A$ be a skew brace and $r_A$ its
  associated solution of the YBE.  Then $(A,r_A)$ is a biquandle.  \end{cor}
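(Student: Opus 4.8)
The plan is to exhibit an explicit bijection $t\colon A\to A$ with $r_A(t(x),x)=(t(x),x)$ for every $x\in A$. Combined with Theorem~\ref{thm:YB}, which already gives that $(A,r_A)$ is a non-degenerate solution of the YBE, this is precisely what the definition of a biquandle asks for. Writing $x^{-1}$ for the inverse of $x$ in $\Add{A}$ and $\overline{x}$ for the inverse of $x$ in $\Mul{A}$, the candidate is $t(x)=\overline{x^{-1}}$. Since $t$ is the composite of the two inversion maps of $A$, each a bijection, $t$ is a bijection.

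The key step is the identity
\[
	\lambda_y\bigl((\overline{y})^{-1}\bigr)=y,\qquad y\in A.
\]
To prove it, I would start from $\overline{y}\circ y=1$, using that the neutral elements of $\Add{A}$ and $\Mul{A}$ coincide (Remark~\ref{rem:0=1}), and rewrite the left-hand side via $\overline{y}\circ y=\overline{y}\,\lambda_{\overline{y}}(y)$ (from Lemma~\ref{lem:lambda}, since $a\circ b=a\,\lambda_a(b)$), obtaining $\lambda_{\overline{y}}(y)=(\overline{y})^{-1}$. Applying $\lambda_y$ to both sides and using that $\lambda\colon\Mul{A}\to\Aut\Add{A}$ is a homomorphism (Lemma~\ref{lem:lambda}), so that $\lambda_y\lambda_{\overline{y}}=\lambda_{y\circ\overline{y}}=\lambda_1=\id$, yields the claimed identity.

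With this in hand, fix $x\in A$ and set $y=t(x)=\overline{x^{-1}}$, so that $\overline{y}=x^{-1}$ and hence $(\overline{y})^{-1}=x$. The identity gives $\lambda_{t(x)}(x)=\lambda_y\bigl((\overline{y})^{-1}\bigr)=y=t(x)$, which is the first coordinate of $r_A(t(x),x)$. For the second coordinate I would use the formula $\mu_x(t(x))=\overline{\lambda_{t(x)}(x)}\circ t(x)\circ x$ from Lemma~\ref{lem:mu}: since $\lambda_{t(x)}(x)=t(x)$, this collapses to $\overline{t(x)}\circ t(x)\circ x=1\circ x=x$. Therefore $r_A(t(x),x)=\bigl(\lambda_{t(x)}(x),\mu_x(t(x))\bigr)=(t(x),x)$, and $(A,r_A)$ is a biquandle. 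There is no real obstacle here: the only nontrivial input is locating the correct $t$; once that is done, the first coordinate reduces to the one-line identity above, and the condition on the second coordinate is automatic by cancellation of $\overline{t(x)}\circ t(x)$.
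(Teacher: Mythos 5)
Your proof is correct, and it takes a genuinely different route from the paper's. The paper does not write down a formula for $t$: it sets $t(a)=\mu_a^{-1}(a)$, uses non-degeneracy together with Remark~\ref{rem:trick} (namely that $r_A(a,b)=(u,v)$ forces $a\circ b=u\circ v$) to see that $\lambda_a^{-1}(a)$ and $\mu_a^{-1}(a)$ are the unique fixed points in each coordinate, and then deduces that $a\mapsto\mu_a^{-1}(a)$ is a bijection because $a\mapsto\lambda_a^{-1}(a)$ is a two-sided inverse of it. You instead exhibit the closed form $t(x)=\overline{x^{-1}}$, whose bijectivity is immediate, and verify $r_A(t(x),x)=(t(x),x)$ by direct computation from $\lambda_{\overline{y}}(y)=(\overline{y})^{-1}$ (which is just the paper's identity $\overline{a}=\lambda_a^{-1}(a^{-1})$ read backwards) and Lemma~\ref{lem:mu}. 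The paper's argument is shorter and also yields uniqueness of the fixed points; yours buys the explicit formulas $\mu_x^{-1}(x)=\overline{x^{-1}}$ and, dually, $\lambda_x^{-1}(x)=(\overline{x})^{-1}$, which make the biquandle structure completely concrete. Both arguments are complete and rely on the same ingredients (Theorem~\ref{thm:YB}, Lemmas~\ref{lem:lambda} and~\ref{lem:mu}).
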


  \begin{proof} 
	  Let $a\in A$.  By Theorem~\ref{thm:YB}, $b=\lambda_a^{-1}(a)$ is the
	  unique element of $A$ such that $r(a,b)=(a,b)$. Similarly, 
	  $\mu^{-1}_a(a)\in A$ is the unique element of $A$ such that
	  $r_A(\mu^{-1}_a(a),a)=(\mu^{-1}_a(a),a)$. It follows that the map $A\to
	  A$, $a\mapsto\mu^{-1}_a(a)$, is bijective with inverse
	  $a\mapsto\lambda^{-1}_a(a)$. 
  \end{proof}

Let $(X,r)$ be a non-degenerate solution.  Recall that the structure group of
$(X,r)$ is defined as the group $G(X,r)$ with generators in $\{e_x:x\in X\}$
and relations $e_xe_y=e_ue_v$ whenever $r(x,y)=(u,v)$.  Let $\iota\colon X\to
G(X,r)$ be the canonical map, i.e., $\iota(x)=e_x$. In general, $\iota$ is not
injective: 

\begin{exa} Let $X=\{1,2,3,4\}$, $\sigma=(12)$ and $\tau=(34)$. Then $(X,r)$,
  $r(x,y)=(\sigma(y),\tau(x))$, is a non-degenerate solution of the YBE.  The
  canonical map $\iota\colon X\to G(X,r)$, $i\mapsto e_i$, is not injective
  since for example \[ e_1e_2=e_1e_1 \] and hence $e_1=e_2$.  \end{exa}

The following result is~\cite[Theorem 9]{MR1769723} in the language of skew
braces, see also~\cite[Theorem 2.7]{MR1809284}: 

\begin{thm} \label{thm:G(X,r)} Let $(X,r)$ be a non-degenerate solution of the
  YBE.  Then there exists a unique skew left brace structure over $G(X,r)$ such
  that \[ r_{G(X,r)}(\iota\times\iota)=(\iota\times\iota)r.  \]
  Furthermore, if $B$ is a skew left brace and $f\colon X\to B$ is a map such
  that $(f\times f)r=r_B(f\times f)$, then there exists a unique skew brace
  homomorphism $\phi\colon G(X,r)\to B$ such that $f=\phi\iota$ and
  $(\phi\times\phi)r_{G(X,r)}=r_B(\phi\times\phi)$.  \end{thm}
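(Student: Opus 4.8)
The plan is to build the skew brace structure on $G(X,r)$ by first constructing the multiplicative group as a free object and then transporting an additive structure through a bijective $1$-cocycle, relying on the equivalence of Theorem~\ref{thm:1cocycle}. Concretely, I would let $(X,r)$ be the given solution with $r(x,y)=(\sigma_x(y),\tau_y(x))$, and recall from~\cite{MR1769723,MR1809284} that one has the associated ``derived'' or ``permutation'' data: the structure group $G=G(X,r)$ acts on $X$, and there is an auxiliary group $A$ (the structure group of the ``rack/derived'' solution, isomorphic as a set to the free-ish object on $X$ subject only to the abelian-type relations coming from $r$) together with a bijective $1$-cocycle $\pi\colon G\to A$ satisfying $\pi(e_x)=$ the canonical generator of $A$ indexed by $x$. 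By Theorem~\ref{thm:1cocycle} the pair $(G,A,\pi)$ is precisely a skew brace with multiplicative group $\Mul{A}\cong G$; denote this skew brace by $B_0$ with underlying set $G$. The first real task is to check that the canonical solution $r_{B_0}$ of Theorem~\ref{thm:YB} satisfies $r_{B_0}(\iota\times\iota)=(\iota\times\iota)r$, i.e.\ that on the generators $e_x,e_y$ one has $\lambda_{e_x}(e_y)=e_{\sigma_x(y)}$ and $\mu_{e_y}(e_x)=e_{\tau_y(x)}$; this is a direct computation using the explicit formulas for $\lambda$ and $\mu$ in terms of $\circ$ and $\cdot$ and the defining relations of $G$, and it is essentially the content of~\cite[Theorem 9]{MR1769723}.

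For the uniqueness of the skew brace structure, I would argue that any skew brace structure on the group $G$ (with $\Mul{G}$ being the given group, since the $\circ$ operation of a structure-group skew brace must be the group law of $G(X,r)$) for which $r_G(\iota\times\iota)=(\iota\times\iota)r$ holds is forced: the identity $r_G(e_x,e_y)=(\lambda_{e_x}(e_y),\mu_{e_y}(e_x))$ together with Remark~\ref{rem:trick} ($e_x\circ e_y=\lambda_{e_x}(e_y)\circ\mu_{e_y}(e_x)$) pins down $\lambda_{e_x}(e_y)=e_{\sigma_x(y)}$ for all $x,y$, hence pins down the homomorphism $\lambda\colon\Mul{G}\to\Aut\Add{G}$ on generators and therefore everywhere; and since $ab=a\circ\lambda^{-1}_a(b)$ (equivalently $a\cdot b$ is recovered from $\circ$ and $\lambda$), the additive operation is determined. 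The only subtlety here is to know that the $\circ$-group of the skew brace is $G(X,r)$ itself and not some other group; this follows from the cocycle description, since in $B_0$ the $\circ$-operation is the group operation of $G$ and $\iota$ must be a map into $(G(X,r),\circ)=G(X,r)$.

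For the universal property, given a skew brace $B$ and $f\colon X\to B$ with $(f\times f)r=r_B(f\times f)$, I would first produce a group homomorphism $\phi\colon(G(X,r),\circ)\to\Mul{B}$ by the universal property of $G(X,r)$: the map $x\mapsto f(x)$ sends the defining relation $e_xe_y=e_ue_v$ (with $r(x,y)=(u,v)$) to $f(x)\circ f(y)=f(u)\circ f(v)$, which holds because $r_B(f(x),f(y))=(f(u),f(v))$ and by Remark~\ref{rem:trick} $f(x)\circ f(y)=f(u)\circ f(v)$. So $\phi$ is a well-defined group homomorphism for $\circ$ with $f=\phi\iota$, and it is unique as such since the $e_x$ generate $G(X,r)$. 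The main obstacle---the step I expect to require the most care---is showing $\phi$ is also additive, i.e.\ $\phi(ab)=\phi(a)\phi(b)$: one reduces to checking compatibility of $\phi$ with the $\lambda$-maps, namely $\lambda^B_{\phi(a)}\phi=\phi\lambda_a$ as maps $\Add{G(X,r)}\to\Add{B}$, it suffices to verify this on generators, where it reads $\lambda^B_{f(x)}(f(y))=f(\sigma_x(y))$; but that is exactly the first coordinate of the hypothesis $r_B(f(x),f(y))=(f(x)\cdot\text{stuff},\dots)$, i.e.\ of $(f\times f)r=r_B(f\times f)$. Then $\phi(a\cdot b)=\phi(a\circ\lambda^{-1}_a(b))=\phi(a)\circ\phi(\lambda^{-1}_a(b))=\phi(a)\circ\lambda^B_{\phi(a)}{}^{-1}(\phi(b))=\phi(a)\cdot\phi(b)$, so $\phi$ is a brace homomorphism, and $(\phi\times\phi)r_{G(X,r)}=r_B(\phi\times\phi)$ follows since $r$ is built functorially from $\lambda$ and $\mu$ and $\phi$ intertwines both. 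Uniqueness of $\phi$ as a brace homomorphism is immediate from uniqueness as a $\circ$-homomorphism together with $f=\phi\iota$.
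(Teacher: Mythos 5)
Your proposal follows essentially the same route as the paper: both delegate the construction of the skew brace structure on $G(X,r)$ and of the $\circ$-homomorphism $\phi$ to Lu--Yan--Zhu via the $1$-cocycle equivalence of Theorem~\ref{thm:1cocycle}, and both derive the additivity of $\phi$ from the identity $ab=a\circ\lambda_a^{-1}(b)$ combined with the intertwining $\phi\lambda_a=\lambda^B_{\phi(a)}\phi$. The only step you take faster than is warranted is the claim that this intertwining ``suffices to verify on generators'': the reduction in the first variable is a clean subgroup argument, but the reduction in the second variable cannot go through the additive group (that would be circular) and instead needs the matched-pair identity $\lambda_a(b\circ b')=\lambda_a(b)\circ\lambda_{\mu_b(a)}(b')$ of Lemma~\ref{lem:brace2matched} together with a simultaneous induction on $\lambda$ and $\mu$ --- which is exactly what the paper's citation of \cite[Theorem 9]{MR1769723} supplies.
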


\begin{proof} 
  By~\cite[Theorem 9]{MR1769723} 
  and the equivalence between skew braces and bijective $1$-cocycles of
  Theorem~\ref{thm:1cocycle}, it remains to prove that 
  \[
    \phi(gh)=\phi(g)\phi(h)
  \]
  for all $g,h\in G(X,r)$. 
  Write $\lambda_B=\mu$. Since $\phi(\lambda_g(h))=\mu_{\phi(g)}\phi(h)$, 
  \begin{align*}
    \phi(gh)&=\phi(g\circ\lambda^{-1}_g(h))
    =\phi(g)\circ \phi(\lambda_g^{-1}(h))
    =\phi(g)\circ\mu^{-1}_{\phi(g)}\phi(h)
    =\phi(g)\phi(h).
  \end{align*}
  From this the claim follows.
\end{proof}

\begin{exa} Let $G$ be a group that admits an exact factorization through the
  subgroups $A$ and $B$. By Theorem~\ref{thm:factorization}, $G$ is a skew
  brace with additive group $G$ and multiplicative group $A\times B$.
  Theorem~\ref{thm:YB} shows that the map $r\colon G\times G\to G\times G$
  given by \[ r(g,h)=(b^{-1}hb,a_1^{-1}ahb b_1^{-1}), \] where $g=ab$ and
  $b^{-1}hb=a_1b_1$ for $a,a_1\in A$ and $b,b_1\in B$, is a non-degenerate
  set-theoretical solution of the YBE.  This is essentially the solution
  constructed by Weinstein and Xu in~\cite[Theorem 9.2]{MR1178147}.  \end{exa}

\begin{exa} Let $A=\Sym_3$. Then $A$ is a skew brace with $a\circ b=ba$.
  Clearly $\lambda_a(b)=a^{-1}ba$, $a,b\in A$ and the associated solution is \[
  r_A\colon A\times A\to A\times A,\quad r_A(a,b)=(a^{-1}ba,a).  \] The order
  of $r_A$ is twelve and the restriction of $r_A$ to the conjugacy class of
  involutions of $A$ has order three.  \end{exa}

\begin{exa} The skew brace of Example~\ref{exa:s3c6} produces a solution of
  order twelve.  This solution is isomorphic to $(X,r)$, where
  $X=\{1,2,\dots,6\}$ and $r(x,y)=(\sigma_x(y),\tau_y(x))$ is given by
  \begin{align*} &\sigma_1=\id, && \sigma_2=\id, && \sigma_3=(263), \\ &
    \sigma_4=(236), && \sigma_5=(263), &&\sigma_6=(236),\\ &\tau_1=\id,
    &&\tau_2=(36)(45), && \tau_3=(36)(45),\\ &\tau_4=\id, && \tau_5=\id, &&
    \tau_6=(36)(45).  \end{align*} \end{exa}

\begin{exa} The skew brace of Example~\ref{exa:d8q8} produces a solution of
  order four.  This solution is isomorphic to $(X,r)$, where
  $X=\{1,2,\dots,8\}$ and $r(x,y)=(\sigma_x(y),\tau_y(x))$ is given by
  \begin{align*} &\sigma_1=\id, && \sigma_2=(25)(47), &&\sigma_3=(38)(47),
    &&\sigma_4=(25)(38), \\ &\sigma_5=(25)(47), &&\sigma_6=\id,
    &&\sigma_7=(25)(38), &&\sigma_8=(38)(47),\\ &\tau_1=\id, &&
    \tau_2=(25)(38), &&\tau_3=(25)(38), &&\tau_4=\id,\\ &\tau_5=(25)(38),
    &&\tau_6=\id, &&\tau_7=\id, &&\tau_8=(25)(38).  \end{align*} \end{exa}

\begin{defn} 
  Let $A$ be a skew brace with additive group $G$. The \emph{depth}
  of $A$ is defined as the exponent of the group $G/Z(G)$.  
\end{defn}

\begin{exa} 
  Classical braces have depth one.  
\end{exa}

To study the depth of a skew brace we need the following lemma. 

\begin{lem} \label{lem:depth} Let $A$ be a skew brace and let $n\in\N$.  Then
  \begin{align} \label{eq:r^2n}r^{2n}(a,b)&=((a\circ b)^{-n}a(a\circ
    b)^n,\overline{(a\circ b)^{-n}a(a\circ b)^n}\circ a\circ b),\\
    \label{eq:r^2n+1}r^{2n+1}(a,b)&=((a\circ b)^{-n}a^{-1}(a\circ
    b)^{n+1},\overline{(a\circ b)^{-n}a^{-1}(a\circ b)^{n+1}}\circ a\circ b),
    \end{align} for all $n\geq0$.  Moreover, the following statements hold:
  \begin{enumerate} \item $r^{2n}=\id$ if and only if $ab^n=b^na$ for all
      $a,b\in A$.  \item $r^{2n+1}=\id$ if and only if $\lambda_a(b)=(a\circ
	b)^na(a\circ b)^{-n}$ for all $a,b\in A$.  \end{enumerate} \end{lem}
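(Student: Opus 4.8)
The plan is to exploit Remark~\ref{rem:trick}: if $r(x,y)=(u,v)$ then $x\circ y=u\circ v$. Write $r^k(a,b)=(x_k,y_k)$ with $(x_0,y_0)=(a,b)$. An immediate induction on $k$ using Remark~\ref{rem:trick} gives $x_k\circ y_k=a\circ b$ for all $k\geq 0$. Hence the second coordinate is determined by the first through $y_k=\overline{x_k}\circ(a\circ b)$, and it is enough to track the first coordinate. Since the first coordinate of $r(x,y)$ is $\lambda_x(y)=x^{-1}(x\circ y)$ by Lemma~\ref{lem:lambda}, we get the recursion
\[
x_{k+1}=x_k^{-1}(x_k\circ y_k)=x_k^{-1}(a\circ b),\qquad y_{k+1}=\overline{x_{k+1}}\circ(a\circ b).
\]

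Next I would establish~\eqref{eq:r^2n} and~\eqref{eq:r^2n+1} by induction on $n$. Put $c=a\circ b$. The base case $n=0$ reads $x_0=a$ and $y_0=\overline{a}\circ c=b$. For the step, if $x_{2n}=c^{-n}ac^n$ then $x_{2n+1}=x_{2n}^{-1}c=c^{-n}a^{-1}c^{n+1}$, and if $x_{2n+1}=c^{-n}a^{-1}c^{n+1}$ then $x_{2n+2}=x_{2n+1}^{-1}c=c^{-(n+1)}ac^{n+1}$; in both cases the displayed formula for $y_k$ is obtained from $y_k=\overline{x_k}\circ c$. This proves the two identities.

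For the first statement: because $x_{2n}\circ y_{2n}=a\circ b$ always holds, $r^{2n}=\id$ is equivalent to $x_{2n}=a$ for all $a,b$, i.e. $a$ commutes with $(a\circ b)^n$ for all $a,b\in A$. For fixed $a$ the map $b\mapsto a\circ b$ is a bijection of $A$, so this says exactly that $ac^n=c^na$ for all $a,c\in A$, which is the condition $ab^n=b^na$. For the second statement, $r^{2n+1}=\id$ is equivalent to $x_{2n+1}=a$ for all $a,b$, i.e. $(a\circ b)^{-n}a^{-1}(a\circ b)^{n+1}=a$; left-multiplying by $(a\circ b)^n$ and right-multiplying by $(a\circ b)^{-n}$ rewrites this as $a^{-1}(a\circ b)=(a\circ b)^na(a\circ b)^{-n}$, that is, $\lambda_a(b)=(a\circ b)^na(a\circ b)^{-n}$.

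The argument is essentially bookkeeping, and the only real care needed is to keep the two group operations apart: the powers $(a\circ b)^{\pm n}$ and the inverses occurring in the recursion are taken in $\Add{A}$, while the invariant $x_k\circ y_k=a\circ b$ and the closing relations $y_k=\overline{x_k}\circ(a\circ b)$ live in $\Mul{A}$. It is also worth recording that, thanks to this invariant, once the first coordinate of $r^{2n}(a,b)$ (resp.\ $r^{2n+1}(a,b)$) equals $a$ the second coordinate is automatically $b$, so in the proofs of~(1) and~(2) no separate analysis of the second coordinate is required.
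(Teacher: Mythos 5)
Your proof is correct and follows essentially the same route as the paper: an induction driven by the invariant $x_k\circ y_k=a\circ b$ from Remark~\ref{rem:trick}, which reduces each application of $r$ to the recursion $x_{k+1}=x_k^{-1}(a\circ b)$, $y_{k+1}=\overline{x_{k+1}}\circ(a\circ b)$. You additionally write out the derivations of statements (1) and (2) from the displayed formulas (including the surjectivity of $b\mapsto a\circ b$ needed to pass from $(a\circ b)^n$ to an arbitrary element), which the paper leaves implicit; these steps are correct.
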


\begin{proof} It suffices to prove~\eqref{eq:r^2n} and~\eqref{eq:r^2n+1}.  We
  proceed by induction on $n$. The case $n=0$ is trivial for~\eqref{eq:r^2n}
  and~\eqref{eq:r^2n+1}. Assume that the claim holds for some $n>0$. If $n$ is
  even, by applying the map $r$ to Equation~\eqref{eq:r^2n} and using
  Remark~\ref{rem:trick} we obtain that \begin{align*} r^{2n+1}(a,b) &= r\left(
    (a\circ b)^{-n}a(a\circ b)^n, \overline{(a\circ b)^{-n}a(a\circ b)^n}\circ
    a\circ b \right)\\ &=\left( (a\circ b)^{-n}a^{-1}(a\circ b)^n(a\circ
    b),\overline{(a\circ b)^{-n}a^{-1}(a\circ b)^n(a\circ b)}\circ a\circ
    b\right)\\ &=\left( (a\circ b)^{-n}a^{-1}(a\circ b)^{n+1},\overline{(a\circ
      b)^{-n}a^{-1}(a\circ b)^{n+1}}\circ a\circ b\right).  \end{align*} Thus
    Equation~\eqref{eq:r^2n+1} holds. If $n$ is odd, a similar argument shows
    that~\eqref{eq:r^2n} holds.  \end{proof}

%\begin{thm} \label{pro:depth_even} Let $A$ be a skew brace of finite depth
%with more than one element and let $r_A$ be its associated solution. Then the
%order of $r_A$ is an even number.  \end{thm}
%
%\begin{proof} Let $n$ be such that $r^{2n+1}=\id$. By applying
%Lemma~\ref{lem:depth} one obtains that $a^{-1}(a\circ b)^{n+1}=(a\circ b)^na$
%for all $a,b\in A$. In particular, if $b=1$, then $a=1$.  \end{proof}

\begin{thm} \label{thm:depth} Let $A$ be a finite skew brace with more than one
  element. Then the order of $r_A$ is $2d$, where $d$ is the depth of $A$.
\end{thm}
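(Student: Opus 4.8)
The plan is to read everything off from Lemma~\ref{lem:depth}, which already computes the iterates $r_A^{2n}$ and $r_A^{2n+1}$ and records precisely when each equals the identity. Since $A$ is finite, $r_A$ lies in the finite group $\Sym_{A\times A}$ and hence has a finite order $m$; write $G=\Add{A}$ and $d=\exp(G/Z(G))$, which is a positive integer.

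First I would treat the even iterates. By part~(1) of Lemma~\ref{lem:depth}, $r_A^{2n}=\id$ if and only if $ab^n=b^na$ for all $a,b\in A$, i.e.\ $b^n\in Z(G)$ for every $b\in G$; this happens exactly when $d\mid n$. In particular $r_A^{2d}=\id$, so $m\mid 2d$, and no $r_A^{2n}$ with $0<n<d$ is the identity. It then remains to exclude odd powers. Suppose $r_A^{2n+1}=\id$ for some $n\ge 0$; by part~(2) of Lemma~\ref{lem:depth} this forces $\lambda_a(b)=(a\circ b)^n a(a\circ b)^{-n}$ for all $a,b\in A$. Specializing $b=1$, which is the common identity of $\Add{A}$ and $\Mul{A}$ by Remark~\ref{rem:0=1}, we get $a\circ 1=a$ and $\lambda_a(1)=a^{-1}(a\circ 1)=1$, so the identity collapses to $1=a^n a a^{-n}=a$ for every $a\in A$. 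This makes $A$ trivial, contradicting $|A|>1$; hence no odd power of $r_A$ is the identity, and in particular $m$ is even.

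Finally I would combine the two observations. Write $m=2k$ with $k\ge 1$. From $r_A^{2k}=\id$ and the even case we obtain $d\mid k$, while $m=2k\mid 2d$ gives $k\mid d$; therefore $k=d$ and $m=2d$, as claimed. The only genuinely load-bearing point — beyond the divisibility bookkeeping, and granting Lemma~\ref{lem:depth} — is that substituting $b=1$ turns the odd-power criterion into the statement that $A$ is trivial, which is what forces the order to be an even multiple of $d$ rather than merely a divisor of $2d$.
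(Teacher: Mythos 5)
Your proof is correct and follows essentially the same route as the paper: both arguments invoke Lemma~\ref{lem:depth}, kill the odd iterates by specializing $b=1$ (using $|A|>1$), and then identify the least $n$ with $r_A^{2n}=\id$ with the exponent of $\Add{A}/Z(\Add{A})$. The only cosmetic difference is that you phrase the final step as a divisibility argument where the paper takes a minimum, and you use criterion (2) of the lemma where the paper substitutes into the explicit formula~\eqref{eq:r^2n+1}; these are interchangeable.
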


\begin{proof} Let $n$ be such that $r^{2n+1}=\id$. By applying
  Lemma~\ref{lem:depth} one obtains that $a^{-1}(a\circ b)^{n+1}=(a\circ b)^na$
  for all $a,b\in A$. In particular, if $b=1$, then $a=1$, a contradiction. 

	Therefore we may assume that the order of the permutation $r_A$ is
	$2n$, where $n=\min\{k:b^ka=ab^k\;\forall a,b\in A\}$. Now one computes
	\begin{align*} n =\min\{k:b^k\in Z(A)\;\forall b\in A\} =\min\{ k:
	  (bZ(A))^k = 1\;\forall b\in A\} =d, \end{align*} and the theorem is
	proved.  \end{proof}

%\begin{exa} A skew brace has depth one if and only if its additive group is
%  abelian.  \end{exa}

\begin{exa} \label{exa:trivial_center} Let $A$ be a finite skew brace such that
  its additive group has trivial center. Then the order of $r_A$ is equal to
  $2e$, where $e$ is the exponent of the additive group of $A$.  \end{exa}

\begin{exa} 
  \label{exa:2p} 
  Let $p$ be an odd prime number and let $A$ be a non-classical skew brace of
  size $2p$. Then the additive group of $A$ is isomorphic to the dihedral group
  $\D_{2p}$ of size $2p$.  Since $Z(\D_{2p})=1$ and the exponent of $\D_{2p}$
  is $2p$, the order of $r_A$ is $4p$.
\end{exa}

%For any finite group $G$ we define the number \[
%n(G)=\min\{n\in\N:ab^n=b^na\text{ for all $a,b\in G$}\}.  \]
%
%%\begin{exa}
%It is easy to get the following values of $n(G)$ for different non-abelian
%groups $G$: \begin{table}[h] \begin{tabular}{c|ccccccc} order of $G$ & 8 & 12
%& 16 & 18 & 20 & 21 & 24\\ \hline $n(G)$ & 2 & 6 & 2,4 & 6,18 & 10,20 & 21 &
%2,6,12 \end{tabular} \end{table}
%
%Using this information and Example~\ref{exa:2p} one obtains the following
%table with examples of the depths of some non-classical skew braces:
%\begin{table}[h] \begin{tabular}{c|ccccccccccc} size & 6 & 8 & 10 & 12 & 14 &
%16 & 18 & 20 & 21 & 22 & 24 \\ \hline depth & 12 & 4 & 20 & 12 & 28 & 4,8 &
%12,36 & 20,40 & 42 & 44 & 4,12,24\\ \end{tabular} \end{table} \end{exa}

\section{Ideals and retractable solutions} \label{ideals}

Ideals of skew braces were defined in~\cite{MR3647970}.

\begin{defn} Let $A$ be a skew brace. A normal subgroup $I$ of $(A,\circ)$ is
  said to be an \emph{ideal} of $A$ if $aI=Ia$ and $\lambda_a(I)\subseteq I$
  for all $a\in A$.  \end{defn}

\begin{exa} Let $f\colon A\to B$ be a skew brace homomorphism. Then $\ker f$ is
  an ideal of $A$ since $f(\lambda_a(x))=\lambda_{f(a)}(f(x))=1$ for all
  $x\in\ker f$ and $a\in A$.  \end{exa}

An important example of an ideal is the socle. As in the classical
case, the socle is useful for studying the structure of skew braces and
multpermutation solutions.

\begin{exa} Let $A$ be a skew brace. Then the \emph{socle} \[ \Soc(A)=\{a\in
  A:a\circ b=ab,\;b(b\circ a)=(b\circ a)b\text{ for all $b\in A$}\} \] is an
  ideal of $A$ contained in the center of $(A,\cdot)$; see~\cite[Lemma
  2.5]{MR3647970}.  \end{exa}

\begin{lem}
  Let $A$ be a skew brace. Then $\Soc(A)=\ker\lambda\cap Z(A,\cdot)$. 
\end{lem}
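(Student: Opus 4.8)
The plan is to prove the two inclusions separately, after first rewriting the two defining conditions of $\Soc(A)$ in terms of $\lambda$. For $a\in A$, the first condition ``$a\circ b=ab$ for all $b$'' says exactly that $\lambda_a(b)=a^{-1}(a\circ b)=b$ for all $b$, i.e.\ that $a\in\ker\lambda$. For the second condition I would write $b\circ a=b\lambda_b(a)$; then $b(b\circ a)=(b\circ a)b$ becomes $b\cdot b\lambda_b(a)=b\lambda_b(a)\cdot b$, and left-cancelling one factor $b$ shows this is equivalent to ``$b$ commutes with $\lambda_b(a)$ in $\Add{A}$''. Thus
\[
\Soc(A)=\{a\in\ker\lambda : b\lambda_b(a)=\lambda_b(a)b\ \text{for all}\ b\in A\}.
\]

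For the inclusion $\ker\lambda\cap Z(\Add{A})\subseteq\Soc(A)$: if $\lambda_a=\id$ then $a\circ b=a\lambda_a(b)=ab$, so the first condition holds; if moreover $a\in Z(\Add{A})$, then since each $\lambda_b\in\Aut\Add{A}$ preserves the center we get $\lambda_b(a)\in Z(\Add{A})$, which in particular commutes with $b$, so the second condition holds as well.

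For the reverse inclusion $\Soc(A)\subseteq\ker\lambda\cap Z(\Add{A})$, the first condition already places $a$ in $\ker\lambda$, and $a\in Z(\Add{A})$ is precisely the fact, recalled just above, that $\Soc(A)$ lies in the center of $\Add{A}$, \cite[Lemma 2.5]{MR3647970}. If one prefers a self-contained argument, it can be extracted from the displayed description of $\Soc(A)$: applying ``$b$ commutes with $\lambda_b(a)$'' with $b\circ c$ in place of $b$, using $b\circ c=b\lambda_b(c)$ and $\lambda_{b\circ c}=\lambda_b\lambda_c$ and then applying the automorphism $\lambda_b^{-1}$, one finds that $\lambda_b^{-1}(b)\,c$ commutes with $\lambda_c(a)$ for all $b,c$; comparing with the case $b=1$ forces $\lambda_b^{-1}(b)$ itself to commute with $\lambda_c(a)$. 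Since $\lambda_b^{-1}(b)=(\overline{b})^{-1}$ runs over all of $A$ as $b$ does, every element of $A$ commutes with $\lambda_c(a)$, and taking $c=1$ yields $a\in Z(\Add{A})$.

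Most of the weight is carried by routine bookkeeping: the two translations of the socle conditions and the ``automorphisms preserve the center'' observation. The only step where something genuinely has to be proved — if one does not simply invoke \cite[Lemma 2.5]{MR3647970} — is the implication ``$b$ commutes with $\lambda_b(a)$ for all $b$'' $\Rightarrow$ ``$a$ is central'', and there the main point is to combine multiplicativity of $\lambda$ over $\circ$ with the surjectivity of the map $b\mapsto(\overline{b})^{-1}$.
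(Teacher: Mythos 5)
Your proof is correct and follows essentially the same route as the paper: both reduce the first socle condition to $a\in\ker\lambda$, handle the inclusion $\Soc(A)\subseteq Z(A,\cdot)$ by citing \cite[Lemma 2.5]{MR3647970}, and verify the commutation condition $b(b\circ a)=(b\circ a)b$ directly for central $a\in\ker\lambda$ (the paper does this by applying the injective map $x\mapsto\overline{b}\circ x$, you by cancelling $b$ and noting that the automorphism $\lambda_b$ preserves the center --- a minor variation). Your optional self-contained derivation of $\Soc(A)\subseteq Z(A,\cdot)$ from $\lambda_{b\circ c}=\lambda_b\lambda_c$ and the surjectivity of $b\mapsto\lambda_b^{-1}(b)$ also checks out and is a nice bonus the paper does not include.
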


\begin{proof}
  By~\cite[Lemma 2.5]{MR3647970} we only need to prove
  $\ker\lambda\cap\Z(A,\cdot)\subseteq\Soc(A)$. Let $a\in\ker\lambda\cap\Soc(A)$. It suffices to show that 
  $b(b\circ a)=(b\circ a)b$ for all $b\in B$. Since $a$ is central, 
  $\overline{b}a=a\overline{b}$ for all $b\in A$. This implies that 
  $\overline{b}\circ\left( b(b\circ a) \right)=\overline{b}\circ\left( (b\circ
  a)b \right)$ for all $b$ and the claim follows.
\end{proof}

\begin{lem}{\cite[Lemma 2.3]{MR3647970}} \label{lem:ideal} Let $A$ be a skew
left brace and $I\subseteq A$ be an ideal. Then the following properties hold:
\begin{enumerate} \item $I$ is a normal subgroup of $(A,\cdot)$.  \item $a\circ
      I=aI$ for all $a\in A$.  \item $I$ and $A/I$ are skew braces.
    \end{enumerate} \end{lem}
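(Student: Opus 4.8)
The plan is to first record the auxiliary fact that $\lambda_a(I)=I$ for every $a\in A$, and then deduce the three statements in the order (1), (2), (3), using (2) as the crucial ingredient for (3). For the auxiliary fact, recall from Lemma~\ref{lem:lambda} that $\lambda\colon\Mul{A}\to\Aut\Add{A}$ is a group homomorphism, so $\lambda_{\overline a}=\lambda_a^{-1}$; applying the hypothesis $\lambda_a(I)\subseteq I$ to $\overline a$ in place of $a$ gives $\lambda_a^{-1}(I)\subseteq I$, hence $I\subseteq\lambda_a(I)$, and therefore $\lambda_a(I)=I$ for all $a\in A$.

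For (1), I would use the identity $a\circ b=a\lambda_a(b)$. Given $x,y\in I$, write $xy=x\circ\lambda_{\overline x}(y)$; here $\lambda_{\overline x}(y)\in\lambda_{\overline x}(I)=I$ and $x\in I$, so $xy\in I$ since $I$ is a subgroup of $\Mul{A}$. Likewise, from $\overline x=\lambda_x^{-1}(x^{-1})$ one gets $x^{-1}=\lambda_x(\overline x)\in\lambda_x(I)=I$. Thus $I$ is a subgroup of $\Add{A}$, and since $aI=Ia$ for all $a$, it is normal in $\Add{A}$. For (2), simply compute $a\circ I=\{a\lambda_a(x):x\in I\}=a\lambda_a(I)=aI$, using the auxiliary fact.

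For (3), note first that $I$ is closed under $\cdot$, under $\circ$, and under both inverses, so it is a subbrace: the compatibility condition~\eqref{eq:compatibility} is a universally quantified identity and hence is inherited by $I$. For the quotient, $I$ is normal in $\Add{A}$ by (1) and normal in $\Mul{A}$ by hypothesis, and by (2) the additive cosets of $I$ coincide with the multiplicative cosets; consequently $A/I$ is a single well-defined set carrying both quotient group structures $(A/I,\cdot)$ and $(A/I,\circ)$, with common neutral element the coset $I$. The canonical projection $A\to A/I$ is simultaneously a homomorphism for $\cdot$ and for $\circ$, so applying it to~\eqref{eq:compatibility} shows $A/I$ satisfies the compatibility condition, i.e. $A/I$ is a skew brace.

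There is no genuine obstacle here; the one point that must be handled with care is part (2), since it is precisely what guarantees that the partition of $A$ into additive cosets of $I$ agrees with the partition into multiplicative cosets — without this, $A/I$ would not be meaningful as a single object equipped with two compatible group operations.
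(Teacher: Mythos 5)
Your proof is correct, and since the paper only cites this lemma from Guarnieri--Vendramin without reproducing a proof, there is nothing to diverge from: your argument (upgrade $\lambda_a(I)\subseteq I$ to $\lambda_a(I)=I$ via $\lambda_{\overline a}=\lambda_a^{-1}$, deduce additive closure from $xy=x\circ\lambda_{\overline x}(y)$ and $x^{-1}=\lambda_x(\overline x)$, then use (2) to identify the two coset partitions) is exactly the standard one. The one point worth being explicit about, which you correctly flag, is that (2) is what makes $A/I$ a single set with two compatible quotient structures.
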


\begin{lem}
  \label{lem:1st}
  Let $f\colon A\to B$ be a surjective homomorphism of skew braces. then
  $A/\ker f\simeq B$. 
\end{lem}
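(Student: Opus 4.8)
The plan is to apply the classical first isomorphism theorems for groups twice — once for the additive structure and once for the multiplicative structure — and then check that the resulting bijection is simultaneously a homomorphism for both operations. First I would note that by the hypothesis $f\colon A\to B$ is a surjective brace homomorphism, hence in particular $f\colon\Mul{A}\to\Mul{B}$ is a surjective group homomorphism, so $\ker f=\{a\in A: f(a)=1\}$ is a normal subgroup of $\Mul{A}$; and an earlier example in this section shows $\ker f$ is in fact an ideal of $A$ (since $f(\lambda_a(x))=\lambda_{f(a)}(f(x))=1$ for $x\in\ker f$). Therefore by Lemma~\ref{lem:ideal} the quotient $A/\ker f$ is a well-defined skew brace, whose additive group is $\Add{A}/\ker f$ and whose multiplicative group is $\Mul{A}/\ker f$, with operations induced coordinatewise by those of $A$.

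Next I would define the candidate map $\overline{f}\colon A/\ker f\to B$ by $\overline{f}(a\circ\ker f)=f(a)$, exactly as in the group-theoretic first isomorphism theorem; equivalently $\overline{f}(a\ker f)=f(a)$, since $a\circ\ker f=a\ker f$ by Lemma~\ref{lem:ideal}(2). The classical theorem applied to $f\colon\Mul{A}\to\Mul{B}$ gives that $\overline{f}$ is well-defined, bijective, and satisfies $\overline{f}(x\circ y)=\overline{f}(x)\circ\overline{f}(y)$ for all $x,y\in A/\ker f$; the same classical theorem applied to $f\colon\Add{A}\to\Add{B}$ (using that the additive coset partition coincides with the multiplicative one, since $\ker f$ is an ideal) shows the very same map $\overline{f}$ also satisfies $\overline{f}(xy)=\overline{f}(x)\overline{f}(y)$. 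A bijective map respecting both $\cdot$ and $\circ$ is precisely a brace isomorphism, so $A/\ker f\simeq B$.

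The only point requiring a moment's care — and the closest thing to an obstacle — is the bookkeeping that the additive cosets and the multiplicative cosets of $\ker f$ really do coincide, so that "$\overline{f}$" is a single well-defined set map rather than two a priori different maps; but this is exactly the content of Lemma~\ref{lem:ideal}(2), $a\circ I=aI$ for an ideal $I$, so no genuine work is needed. Everything else is a direct invocation of the group-theoretic first isomorphism theorem (e.g.~\cite[\S1.1]{MR1357169}) applied in turn to the two group structures on $A$, and the verification that the inverse of $\overline{f}$ is automatically a brace homomorphism as well.
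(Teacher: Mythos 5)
Your proof is correct and follows essentially the same route as the paper, which simply declares the map $a\ker f\mapsto f(a)$ to be a well-defined isomorphism by "routine calculation"; you have filled in exactly that routine, correctly identifying the one point of care (that additive and multiplicative cosets of the ideal $\ker f$ coincide, via Lemma~\ref{lem:ideal}(2)). No gaps.
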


\begin{proof}
  A routine calculation shows that $A/\ker f\to B$, $a\ker(f)\mapsto f(a)$, is a well-defined
  isomorphism of skew braces.
\end{proof}

The following proposition is a simple application of the transfer theory.
We will use the following theorem of Schur, see for example~\cite[\S10.1.3]{MR1357169}. 
If $H$ is a central subgroup of finite index $n$ in a group $G$,
the map $x\mapsto x^n$ is a group homomorphism since it is the transfer of $G$ 
into $H$.

\begin{pro} Let $A$ be a skew brace. Assume that the socle has finite index
  $n$.  Then the map $A\to A$, $a\mapsto a^n$, is a group homomorphism.
\end{pro}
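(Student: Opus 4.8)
The plan is to apply the theorem of Schur quoted just above to the group $G=\Add{A}$ and the subgroup $H=\Soc(A)$. Two things have to be checked before Schur's theorem can be invoked: that $\Soc(A)$ lies in the center of $\Add{A}$, and that the index of $\Soc(A)$ in $\Add{A}$ is exactly the integer $n$ appearing in the hypothesis.

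The first point is already in hand: $\Soc(A)$ is contained in the center of $\Add{A}$ by \cite[Lemma 2.5]{MR3647970} (equivalently, by the identification $\Soc(A)=\ker\lambda\cap Z\Add{A}$ established above). For the second point, the only subtlety is that the phrase ``the socle has finite index $n$'' refers most naturally to the index in $\Mul{A}$, since $\Soc(A)$ is an ideal and ideals are normal subgroups of $\Mul{A}$. But $\Soc(A)$ being an ideal, Lemma~\ref{lem:ideal}(2) gives $a\circ\Soc(A)=a\,\Soc(A)$ for every $a\in A$, so the left cosets of $\Soc(A)$ with respect to $\circ$ coincide with those with respect to $\cdot$; hence $[\Add{A}:\Soc(A)]=[\Mul{A}:\Soc(A)]=n$ and the ambiguity disappears. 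Now Schur's theorem applies verbatim: $\Soc(A)$ is a central subgroup of finite index $n$ in $\Add{A}$, so $a\mapsto a^{n}$ is a group endomorphism of $\Add{A}$, which is the assertion.

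I would close with the remark that the same conclusion holds for the multiplicative structure. Indeed, for $a\in\Soc(A)=\ker\lambda\cap Z\Add{A}$ and any $b\in A$ one has $a\circ b=a\lambda_a(b)=ab$ and $b\circ a=b\lambda_b(a)=ba=ab$, so $a$ is central in $\Mul{A}$ as well; applying Schur's theorem to $\Mul{A}$ then shows that the $n$-fold $\circ$-product $a\mapsto a\circ\cdots\circ a$ is an endomorphism of $\Mul{A}$ too. There is no genuine obstacle in the argument: the whole proof is just ``central subgroup of index $n$, apply Schur'', and the one point that needs care—the meaning of ``index $n$''—is dissolved by Lemma~\ref{lem:ideal}(2).
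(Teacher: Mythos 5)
Your proof of the stated proposition is correct and is exactly the paper's argument: $\Soc(A)$ is a central subgroup of $\Add{A}$ of finite index $n$, so Schur's transfer theorem applies; the observation that Lemma~\ref{lem:ideal}(2) makes the additive and multiplicative indices agree is a legitimate (if unneeded) extra precaution.

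Your closing remark, however, is false, and you should not keep it. The step $b\circ a=b\lambda_b(a)=ba$ requires $\lambda_b(a)=a$ for all $b\in A$, which does not follow from $a\in\ker\lambda$: membership in $\ker\lambda$ says $\lambda_a=\id$, not that $a$ is fixed by every $\lambda_b$ (being an ideal only gives $\lambda_b(\Soc(A))\subseteq\Soc(A)$). Concretely, take the classical brace of Example~\ref{exa:sd} with $M=C_3$, $A=C_2$ acting by inversion: the additive group is $C_6$, the multiplicative group is $C_3\rtimes C_2\simeq\Sym_3$, and $\Soc=\ker\lambda=C_3\times 1$, which is certainly not central in $\Sym_3$ (indeed $(y,1)\circ(1,c)=(y,c)$ while $(1,c)\circ(y,1)=(y^{-1},c)$). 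So the socle need not be central in $\Mul{A}$, and the Schur argument for the $\circ$-power map does not go through.
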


\begin{proof} Since $\Soc(A)\subseteq Z\Add{A}$ by~\cite[Lemma 2.5]{MR3647970},
  the claim follows since $\Soc(A)$ has finite index in $G$. 
%  from a theorem of Schur; see for
%  example~\cite[\S10.1.3]{MR1357169}.  
\end{proof}

\begin{defn} A skew brace is said to be \emph{simple} if $A\ne1$ and $1$ and
  $A$ are the only ideals of $A$.  \end{defn}

\begin{exa} Skew braces with a prime number of elements are simple.  \end{exa}

\begin{exa} Skew braces with simple multiplicative group are simple.  \end{exa}

\begin{exa} Skew braces with simple additive group are simple.  \end{exa}

\begin{exa} \label{exa:simpleissimple} The skew brace of
  Example~\ref{exa:simple} is simple since a nontrivial proper normal subgroup
  of $C_3\rtimes C_4$ have size three and a nontrivial proper normal subgroup
  of $\Alt_4$ have size four; see Lemma~\ref{lem:ideal}.  \end{exa}

The following problem arises naturally.

\begin{problem} Classify finite simple skew braces.  \end{problem}

\begin{rem} The problem of classifying classical simple braces is intensively
  studied, see for example~\cite{Bachiller2}.  \end{rem}

\begin{defn} 
  Let $A$ be a skew brace. The \emph{socle series} of $A$ is defined as the
  sequence 
  \[ 
    A_1 = A,\quad A_{n+1}=A_n/\Soc(A_n),\quad n\geq1.  
  \]
\end{defn}

\begin{lem}
  Let $A$ ba a skew brace. Let $S_1(A)=\Soc(A)$ and 
  \[
    S_{n+1}(A)=\{a\in A:(a\circ b)^{-1}ab\in S_{n}(A),\, [b,b\circ a]\in S_n(A)\;\forall b\in A\}
  \]
  for $n\geq1$, where $[x,y]=x^{-1}y^{-1}xy$ denotes the commutator of $x$ and
  $y$.  Then $A_{n+1}=A/S_{n}(A)$ for all $n\in\N$.
\end{lem}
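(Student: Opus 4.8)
The plan is to show, by induction on $n$, that the composite of canonical quotient maps
\[
  p_n\colon A=A_1\longrightarrow A_2\longrightarrow\cdots\longrightarrow A_{n+1}
\]
is a surjective homomorphism of skew braces with $\ker p_n=S_n(A)$; granting this, Lemma~\ref{lem:1st} immediately yields $A_{n+1}\simeq A/S_n(A)$, which is the assertion. The base case $n=1$ is clear: $p_1$ is the quotient map $A\to A/\Soc(A)=A_2$ and $\ker p_1=\Soc(A)=S_1(A)$.

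For the inductive step I would assume $p_n$ is surjective with $\ker p_n=S_n(A)$, so that $p_n$ factors through an isomorphism $A/S_n(A)\xrightarrow{\sim}A_{n+1}$ composed with the quotient map $q\colon A\to A/S_n(A)$ (Lemma~\ref{lem:1st}); recall also that, being an ideal of $A$, $S_n(A)$ is normal in $\Add A$ and satisfies $a\circ S_n(A)=aS_n(A)$ (Lemma~\ref{lem:ideal}), so that the two group structures on $A/S_n(A)$ live on the same set of cosets. Writing $\pi\colon A_{n+1}\to A_{n+1}/\Soc(A_{n+1})=A_{n+2}$ for the next quotient, one has $p_{n+1}=\pi p_n$, which is again surjective, and $\ker p_{n+1}=p_n^{-1}\bigl(\Soc(A_{n+1})\bigr)$. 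Transporting across the isomorphism $A/S_n(A)\simeq A_{n+1}$, the step reduces to the identity $q^{-1}\bigl(\Soc(A/S_n(A))\bigr)=S_{n+1}(A)$.

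The substance is this last identity, which I would obtain by unwinding the definition of $\Soc$. Set $B=A/S_n(A)$; then for $a\in A$ one has $\bar a\in\Soc(B)$ if and only if, for every $b\in A$, both $\bar a\circ\bar b=\bar a\bar b$ and $\bar b(\bar b\circ\bar a)=(\bar b\circ\bar a)\bar b$ hold in $B$ (as $b$ runs over $A$, $\bar b$ runs over all of $B$ since $q$ is onto). Because $q$ respects both operations, the first equality translates to $(ab)^{-1}(a\circ b)\in S_n(A)$, equivalently $(a\circ b)^{-1}ab\in S_n(A)$; the second translates to $(b\circ a)^{-1}b^{-1}(b\circ a)b\in S_n(A)$, i.e.\ $[b\circ a,b]\in S_n(A)$, equivalently $[b,b\circ a]\in S_n(A)$ since a commutator and its reverse are mutually inverse and $S_n(A)$ is a subgroup. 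These are exactly the conditions defining $S_{n+1}(A)$, so $q^{-1}\bigl(\Soc(B)\bigr)=S_{n+1}(A)$; in particular $S_{n+1}(A)$ is automatically an ideal of $A$, being the preimage of an ideal under a brace homomorphism. Thus $\ker p_{n+1}=S_{n+1}(A)$ and $A_{n+2}\simeq A/S_{n+1}(A)$ by Lemma~\ref{lem:1st}, closing the induction.

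I do not expect a real obstacle: the heart of the matter is just the correspondence between ideals of $A/S_n(A)$ and ideals of $A$ containing $S_n(A)$, supplied by Lemmas~\ref{lem:ideal} and~\ref{lem:1st}, together with the coset bookkeeping above. The only points requiring mild care are the faithful reduction of the two $\Soc$-conditions modulo $S_n(A)$ and the harmless interchange $[b,b\circ a]\leftrightarrow[b\circ a,b]$.
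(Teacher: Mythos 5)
Your proof is correct and follows essentially the same route as the paper's: identify $S_{n+1}(A)$ as the preimage of $\Soc(A/S_n(A))$ under the quotient map by unwinding the two defining conditions of the socle as coset equalities modulo $S_n(A)$, then induct on $n$. You simply spell out the details (the kernel bookkeeping via Lemma~\ref{lem:1st}, the harmless passage between a commutator and its inverse) that the paper leaves implicit.
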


\begin{proof}
  Notice that $a\in S_{n+1}(A)$ if and only if
  $abS_n(A)=(a\circ b)S_n(A)$ and 
  $b(b\circ a)S_n(A)=(b\circ a)S_n(A)$ for all $b\in A$. Now the claim follows
  by induction on $n$.
\end{proof}

\begin{defn} Let $A$ be a skew brace. It is said that $A$ has \emph{finite 
  multipermutation level} if there exists $n\in\N$ such that $A_n$ has only one
  element.  \end{defn}

\begin{example} Let $A$ be the skew brace of Example~\ref{exa:d8q8}. The socle
  $\Soc(A)$ of $A$ has two elements and hence $A_2=A/\Soc(A)$ is the trivial
  classical brace over $C_2\times C_2$. It follows that $\Soc(A_2)=A_2$ and
  hence $A$ has finite multipermutation level.  \end{example}

\begin{example} Let $A$ be the simple skew brace of
  Example~\ref{exa:simpleissimple}. Then $\Soc(A)=1$ and hence $A$ does not
  have finite multipermutation level.  \end{example}

Recall the construction of semidirect product of skew braces of
Corollary~\ref{cor:ltimes}.

\begin{thm}
  Let $A$ and $B$ be skew braces of finite multipermutation level. Let
  $C=A\ltimes B$ be a semidirect product of $A$ and $B$. Then $C$ has finite
  multipermutation level.
\end{thm}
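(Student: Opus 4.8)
The plan is to show that the socle series of $C=A\ltimes B$ terminates by relating it to the socle series of $A$ and $B$ separately. First I would compute $\Soc(C)$ explicitly. Recall that for $C=A\ltimes B$ we have $(a,b)(a',b')=(aa',bb')$ and $(a,b)\circ(a',b')=(a\circ a',b\circ\alpha_a(b'))$, and consequently $\lambda_{(a,b)}(a',b')=(a,b)^{-1}((a,b)\circ(a',b'))=(\lambda_a(a'),\lambda_b\alpha_a(b'))$ — here I am using that the $\lambda$ of $B$ composes with the brace automorphism $\alpha_a$. From the description $\Soc(C)=\ker\lambda\cap Z(C,\cdot)$ proved earlier, an element $(a,b)$ lies in $\Soc(C)$ precisely when $\lambda_a=\id$, $a\in Z(A,\cdot)$, $b\in Z(B,\cdot)$, and $\lambda_b\alpha_a(b')=b'$ for all $b'$. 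I would package this as: the projection $\pi\colon C\to A$ is a surjective brace homomorphism with kernel the ideal $1\times B\cong B$, so by Lemma~\ref{lem:1st} we get $C/(1\times B)\cong A$; moreover $\Soc(C)\cap(1\times B)$ corresponds to a sub-ideal of $B$ containing $\Soc(B)$ (in fact the fixed points of all $\lambda_b\alpha_a$), and the image of $\Soc(C)$ in $A$ lies inside $\Soc(A)$.

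The key step is then an induction comparing the socle series of $C$ with those of $A$ and $B$. The cleanest route is to prove that the quotient map $\pi\colon C\to A$ induces, at each level, a surjection $C_n=C/S_{n-1}(C)\twoheadrightarrow A/S_{n-1}(A)=A_n$ whose kernel is a homomorphic image of a quotient of $B$ appearing in the socle series of $B$ — more precisely, I expect to show by induction on $n$ that $S_{n-1}(C)\supseteq 1\times S_{n-1}(B)$ and that $S_{n-1}(C)$ maps onto $S_{n-1}(A)$ under $\pi$. Granting this, once $n$ is large enough that $A_n=1$, the brace $C_n$ is a quotient of $1\times (B/S_{n-1}(B))=B_n$; and once $n$ is also large enough that $B_n=1$, we get $C_n=1$. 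So if $A$ has multipermutation level $p$ and $B$ has multipermutation level $q$, then $C$ has multipermutation level at most $\max\{p,q\}$ (or $p+q$ if the bookkeeping is looser). I would phrase the induction using the commutator-style description $S_{n+1}(A)=\{a:(a\circ b)^{-1}ab\in S_n(A),\ [b,b\circ a]\in S_n(A)\ \forall b\}$ from the lemma just above the theorem, since that description is what makes "$(a,b)\in S_{n+1}(C)$ iff $a\in S_{n+1}(A)$ and a $B$-type condition modulo $S_n(B)$ holds" a routine component-by-component check.

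The main obstacle will be the interaction of the action $\alpha$ with the socle series of $B$: the condition defining $S_{n+1}(C)$ involves elements like $(a\circ b)^{-1}ab$ computed in $C$, whose $B$-component mixes $b$-data with $\alpha_a$-twists of $b'$-data, so I must verify that the relevant $B$-side congruences can be read modulo $S_n(B)$ and that $S_n(B)$ is $\alpha_a$-stable. Since each $\alpha_a$ is a brace automorphism of $B$, it permutes the ideals of $B$, and in particular fixes the characteristic ideal $S_n(B)$ — this is the fact I would isolate and prove first (Soc is characteristic, hence so is every term of the socle series). With $\alpha_a$-invariance of $S_n(B)$ in hand, the twist disappears after passing to $B/S_n(B)$ and the induction goes through. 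The only other mild nuisance is keeping straight that the $A$-component and $B$-component conditions genuinely decouple; I expect that to be a direct unwinding of the formulas for $\cdot$ and $\circ$ on $C$, so I would state it as a claim and verify it componentwise without belaboring the algebra.

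\begin{proof}[Proof sketch]
Write $C=A\ltimes B$ with the operations of Corollary~\ref{cor:ltimes}. The projection $\pi\colon C\to A$, $(a,b)\mapsto a$, is a surjective brace homomorphism with kernel the ideal $1\times B$, which as a skew brace is isomorphic to $B$; by Lemma~\ref{lem:1st}, $C/(1\times B)\simeq A$.

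\emph{Step 1: the socle series of a skew brace is characteristic.} Every brace automorphism permutes the ideals of a skew brace, and $\Soc$ is defined intrinsically, so $\Soc$ is a characteristic ideal; by induction each term $S_n(B)$ of the socle series is a characteristic ideal of $B$. In particular $\alpha_a(S_n(B))=S_n(B)$ for all $a\in A$ and all $n$.

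\emph{Step 2: componentwise description of $S_n(C)$.} Using $\lambda_{(a,b)}(a',b')=(\lambda_a(a'),\lambda_b\alpha_a(b'))$ and the formula for $\Soc$, one checks that $\Soc(C)=\{(a,b):a\in\Soc(A),\ \lambda_b\alpha_a=\id,\ b\in Z(B,\cdot)\}$; in particular $1\times S_1(B)\subseteq S_1(C)$ and $\pi(S_1(C))\subseteq S_1(A)$. Arguing by induction on $n$ with the commutator description of the socle series, and using Step 1 to discard the $\alpha_a$-twists modulo $S_n(B)$, one obtains for all $n$:
\[
1\times S_n(B)\subseteq S_n(C),\qquad \pi\bigl(S_n(C)\bigr)\subseteq S_n(A).
\]

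\emph{Step 3: termination.} Suppose $A_{p+1}=1$ and $B_{q+1}=1$, i.e. $S_p(A)=A$ and $S_q(B)=B$. Let $m=\max\{p,q\}$. By Step 2, $\pi(S_m(C))=A$, so $C=S_m(C)\cdot(1\times B)$; and $1\times S_m(B)=1\times B\subseteq S_m(C)$. Hence $S_m(C)=C$, i.e. $C_{m+1}$ has only one element, so $C$ has finite multipermutation level.
\end{proof}
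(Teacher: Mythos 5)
Your containment $1\times S_n(B)\subseteq S_n(C)$ and the observation that each $S_n(B)$ is characteristic (hence $\alpha_a$-stable) match the first half of the paper's argument, but Step~3 has a genuine logical gap. The inclusion you extract from Step~2, $\pi(S_m(C))\subseteq S_m(A)$, is an \emph{upper} bound on $S_m(C)$; from $\pi(S_m(C))\subseteq S_m(A)=A$ you cannot conclude $\pi(S_m(C))=A$. To force $S_m(C)=C$ you need a \emph{lower} bound of the form $S_n(A)\times 1\subseteq S_n(C)$, and that is false in general: since $\lambda^{C}_{(a,1)}(a',b')=(\lambda_a(a'),\alpha_a(b'))$, an element $(a,1)$ with $a\in\Soc(A)$ but $\alpha_a\neq\id$ does not lie in $\ker\lambda^C$, hence not in $\Soc(C)$. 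Concretely, take $A$ and $B$ to be the trivial braces on $C_2$ and $C_3$ and let $\alpha$ be the inversion action; then $p=q=1$ but $\Soc(C)=1\times B\neq C$, so $C$ has multipermutation level $2>\max\{p,q\}$. This refutes both the bound $\max\{p,q\}$ and the termination argument as written.

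The repair is the shift used in the paper's proof: first choose $k$ with $S_k(B)=B$, so that $1\times B\subseteq S_k(C)$ by your Step~2. Only \emph{after} all of $1\times B$ has been absorbed does the $\alpha_a$-twist in the $B$-component become invisible, giving $\Soc(A)\times 1\subseteq S_{k+1}(C)$ and, by induction, $S_n(A)\times 1\subseteq S_{n+k}(C)$. Taking $l$ with $S_l(A)=A$ then yields $S_{k+l}(C)\supseteq (A\times 1)(1\times B)=C$, so the correct bound is $p+q$ rather than $\max\{p,q\}$. In short: keep your Steps~1 and the $B$-half of Step~2, discard the upper-bound inclusion $\pi(S_n(C))\subseteq S_n(A)$ (it plays no role in termination), and replace it with the shifted lower bound $S_n(A)\times 1\subseteq S_{n+k}(C)$.
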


\begin{proof}
By induction one proves that 
\[
  1\times S_n(B)\subseteq S_n(C)
\]
for all $n\in\N$.  Since $B$ has finite multipermutation level, there exists
$k\in\N$ $1\times B\subseteq 1\times S_k(B)\subseteq S_k(C)$. Since
$\Soc(A)\times 1\subseteq S_{k+1}(C)$, one proves by induction that
$S_n(A)\times1\subseteq S_{n+k}(C)$ for all $n\in\N$. Now let $l\in\N$ be such
that $S_l(A)=A$. Then $A\times1=S_l(A)\times 1\subseteq S_{k+l}(C)$ and hence
$S_{k+l}(C)=C$.
\end{proof}

\begin{thm} \label{thm:fpl=>nilpotent} Let $A$ be a skew brace of finite
  multipermutation level. Then $\Add{A}$ is nilpotent.  \end{thm}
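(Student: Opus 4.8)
Let $A$ be a skew brace of finite multipermutation level. Then $\Add{A}$ is nilpotent.

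\medskip

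The plan is to induct on the multipermutation level of $A$, using the socle filtration. If $A$ has multipermutation level $1$, then $\Soc(A)=A$, so in particular $A\subseteq Z\Add{A}$ by \cite[Lemma 2.5]{MR3647970}, hence $\Add{A}$ is abelian and we are done. For the inductive step, suppose the result holds for all skew braces of multipermutation level less than $n$, and let $A$ have level $n$. Then $A/\Soc(A)$ has level $n-1$, so by induction $\Add{A/\Soc(A)}=\Add{A}/\Soc(A)$ is nilpotent, say of class $c$. Since $\Soc(A)\subseteq Z\Add{A}$, this is a central extension of a nilpotent group, and a standard fact from group theory — a central extension of a nilpotent group is nilpotent — gives that $\Add{A}$ is nilpotent of class at most $c+1$. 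The only real content beyond invoking Lemma~\ref{lem:ideal} (so that $\Soc(A)$ is an ideal and $A/\Soc(A)$ is again a skew brace with additive group the quotient $\Add{A}/\Soc(A)$) is the observation that $\Soc(A)$ lands inside the \emph{center} of the additive group, not just inside it as a subgroup; this is exactly the content of \cite[Lemma 2.5]{MR3647970} cited earlier in the excerpt.

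\medskip

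More explicitly, I would spell out the central-extension argument rather than merely cite it, since it is short: if $G$ is a group, $Z\le Z(G)$, and $G/Z$ is nilpotent of class $c$, then the lower central series satisfies $\gamma_{c+1}(G)\subseteq Z$ (because $\gamma_{c+1}(G)$ maps into $\gamma_{c+1}(G/Z)=1$), and then $\gamma_{c+2}(G)=[\gamma_{c+1}(G),G]\subseteq[Z,G]=1$. Applying this with $G=\Add{A}$ and $Z=\Soc(A)$ closes the induction. I would phrase the induction on $n$ with the base case $n=1$ handled by the abelian observation above; the statement "$A_n$ has only one element'' from the definition of finite multipermutation level translates to "the socle series terminates'', and one iteration of the socle series is precisely passing from $A$ to $A/\Soc(A)$, whose level is one less.

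\medskip

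The main obstacle is essentially bookkeeping rather than mathematics: one must make sure that the socle series $A_1=A$, $A_{n+1}=A_n/\Soc(A_n)$ interacts correctly with the additive group, i.e. that $\Add{A_{k}}$ is the quotient of $\Add{A}$ by an ascending chain of subgroups each of which is central modulo the previous stage. This follows by iterating the single-step fact $\Add{A/\Soc(A)}\cong\Add{A}/\Soc(A)$ (from Lemma~\ref{lem:ideal}(3)) together with $\Soc(A)\subseteq Z\Add{A}$, and it is this chain of central-by-central extensions that yields nilpotency of $\Add{A}$ with class bounded by the multipermutation level. No delicate estimate is needed; one only needs the elementary fact that a tower of central extensions of length $n$ starting from the trivial group produces a nilpotent group of class at most $n$.
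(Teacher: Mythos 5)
Your proof is correct, and it takes a mildly but genuinely different route from the paper's. The paper inducts on the \emph{size} of $A$: it assumes the result for all skew braces of order $<|A|$ and applies it to $A/\Soc(A)$, then concludes via the same central-extension fact you use (since $\Soc(A)\subseteq Z\Add{A}$). That argument implicitly requires $A$ to be finite and relies on $\Soc(A)\neq 1$ for nontrivial $A$ (which does follow from finite multipermutation level, since otherwise the socle series would stall), neither of which the paper spells out. Your induction on the multipermutation level itself sidesteps both points: it needs no finiteness hypothesis, makes the termination of the socle series do the work explicitly, and yields the sharper conclusion that the nilpotency class of $\Add{A}$ is bounded by the multipermutation level. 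The mathematical core — $\Soc(A)\subseteq Z\Add{A}$ from \cite[Lemma 2.5]{MR3647970}, the quotient $A/\Soc(A)$ being a skew brace with additive group $\Add{A}/\Soc(A)$, and the lower-central-series computation $\gamma_{c+1}(G)\subseteq Z\Rightarrow\gamma_{c+2}(G)=1$ — is identical in both proofs. The only quibble is an off-by-one in your indexing of the base case (with the paper's convention $A_1=A$, level $1$ means $A$ is trivial, and $\Soc(A)=A$ is the level-$2$ case), but this does not affect the argument.
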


\begin{proof} We proceed by induction on the size of $A$. If the order of $A$
  is a prime number, then $\Add{A}$ is nilpotent.  Now assume that the result
  holds for all skew braces of size $<|A|$.  Since $A/\Soc(A)$ is nilpotent by
  induction and $\Soc(A)$ is a central subgroup of $\Add{A}$, it follows that
  $A$ is nilpotent.  \end{proof}

\begin{rem} The converse of Theorem~\ref{thm:fpl=>nilpotent} does not hold. One
  example is the simple classical brace of size $24$ constructed
  in~\cite[Remark 7.2]{Bachiller2}. Another example: In the list of skew braces
  computed in~\cite{MR3647970} one can find a non-classical brace of size $16$
  with trivial socle and nilpotent additive group.  \end{rem}

\section{Skew braces and other algebraic structures} \label{others}

In Subsection~\ref{1cocycles} we reviewed the equivalence between skew braces
and bijective $1$-cocycles. In this section we state several equivalences
involving skew braces. 

\subsection{Skew cycle sets}

Recall that a \emph{cycle set} is a pair $(X,\bullet)$, where $X$ is a set and
$(a,b)\mapsto a\bullet b$ is a binary operation on $X$ such that each map
$\varphi_a\colon X\to X$, $\varphi_a(b)=a\bullet b$, is bijective, and 
\[
    (a\bullet b)\bullet (a\bullet c)=(b\bullet a)\bullet (b\bullet c)
\]
holds for all $a,b,c\in A$.

A \emph{linear cycle set} is a triple $(A,+,\bullet)$, where $(A,+)$ is an
abelian group, $(A,\bullet)$ is a cycle set, and
\[
a\bullet (b+c)=(a\bullet b)+(a\bullet c),\quad
(a+b)\bullet c=(a\bullet b)\bullet (a\bullet c)
\]
hold for all $a,b,c\in A$. 

Linear cycle sets were introduced by Rump in~\cite{MR2132760}.  Classical
braces are equivalent to linear cycle sets; see for example~\cite[Proposition
2.3]{MR3291816}. 

\begin{defn}
  A \emph{skew cycle set} is a triple $(A,\cdot,\bullet)$, where
  $\Add{A}$ is a (not necessarily
  abelian) group and $(a,b)\mapsto a\bullet b$ is a binary operation on
  $A$ such that each map $\varphi_a\colon X\to X$, $\varphi_a(b)=a\bullet b$,
  is bijective, and 
  \begin{align}
%    \label{eq:ccs1}(a\bullet b)\bullet (a\bullet c)&=(b\bullet (b^{-1}ab))\bullet (b\bullet c),\\
    \label{eq:ccs2}a\bullet (bc)&=(a\bullet b)(a\bullet c),\\
    \label{eq:ccs3}(ab)\bullet c&=(a\bullet b)\bullet (a\bullet c)
  \end{align}
  hold for all $a,b,c\in A$.
\end{defn}

\begin{rem}
  Let $A$ be a skew cycle set. It follows from~\eqref{eq:ccs3} that 
  \[
    (a\bullet b)\bullet (a\bullet c)=(b\bullet (b^{-1}ab))\bullet (b\bullet c)
  \]
  holds for all $a,b,c\in A$. 
\end{rem}

\begin{defn}
	Let $A$ and $B$ be skew cycle sets. A \emph{homomorphism} between
	$A$ and $B$ is a group homomorphism $f\colon A\to B$ such that 
	\[
	f(a\bullet a')=f(a)\bullet f(a')
	\]
	for all $a,a'\in A$. 
\end{defn}

\begin{notation}
	Let $A$ be a skew cycle set. The inverse operation of
	$\bullet$ will be denoted by $*$, i.e. $a\bullet b=c$ if and only if
	$a*c=b$, $a,b,c\in A$.
\end{notation}

\begin{lem}
	\label{lem:ccs:trick}
	Let $A$ be a skew cycle set. Then 
	\begin{align}
		\label{eq:ccs:trick1}a*(bc)&=(a*b)(a*c), \\
		\label{eq:ccs:trick2}(ab)*c&=a*( (a\bullet b)*c)
	\end{align}
	for all $a,b,c\in A$. 
\end{lem}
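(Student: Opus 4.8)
The plan is to deduce both identities directly from the defining relations \eqref{eq:ccs2} and \eqref{eq:ccs3}, together with the fact that each $\varphi_a$ is a bijection with inverse described by $*$, i.e. $a*(a\bullet b)=b$ and $a\bullet(a*b)=b$. The first identity \eqref{eq:ccs:trick1} should be a formal consequence of \eqref{eq:ccs2}: the map $\varphi_a$ is a bijective endomorphism of $\Add{A}$ by \eqref{eq:ccs2}, hence $\varphi_a^{-1}$ is too, and $\varphi_a^{-1}$ is precisely $b\mapsto a*b$. Concretely, to check $a*(bc)=(a*b)(a*c)$ it suffices to apply the bijection $\varphi_a$ to both sides and use \eqref{eq:ccs2}: $a\bullet\bigl((a*b)(a*c)\bigr)=(a\bullet(a*b))(a\bullet(a*c))=bc$, while $a\bullet(a*(bc))=bc$ as well; since $\varphi_a$ is injective the two arguments agree.

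For \eqref{eq:ccs:trick2}, namely $(ab)*c=a*\bigl((a\bullet b)*c\bigr)$, the strategy is again to apply an appropriate bijection to both sides and reduce to \eqref{eq:ccs3}. The relevant map is $\varphi_{ab}$, since the left-hand side is $\varphi_{ab}^{-1}(c)$. Applying $\varphi_{ab}$ to the right-hand side and using \eqref{eq:ccs3} in the form $(ab)\bullet x=(a\bullet b)\bullet(a\bullet x)$, one gets
\[
(ab)\bullet\Bigl(a*\bigl((a\bullet b)*c\bigr)\Bigr)=(a\bullet b)\bullet\Bigl(a\bullet\bigl(a*((a\bullet b)*c)\bigr)\Bigr)=(a\bullet b)\bullet\bigl((a\bullet b)*c\bigr)=c,
\]
using $a\bullet(a*x)=x$ and then $(a\bullet b)\bullet((a\bullet b)*c)=c$. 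Since $\varphi_{ab}$ is injective and also sends the left-hand side $(ab)*c$ to $c$, the two sides of \eqref{eq:ccs:trick2} coincide.

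I would write the proof as two short displayed computations of exactly this shape, prefaced by recording the cancellation rules $a*(a\bullet b)=b=a\bullet(a*b)$ coming from the definition of $*$. The only mild subtlety — and the one place to be careful — is the bookkeeping of which element plays the role of the "base point" of each $\bullet$: in \eqref{eq:ccs:trick2} the outer operation on the right is $*$ with base $a$, the inner one is $*$ with base $a\bullet b$, and applying $\varphi_{ab}$ must be matched against \eqref{eq:ccs3} with the arguments in the correct order, so I expect the main (modest) obstacle to be simply getting the nesting of $a$ versus $a\bullet b$ right rather than any genuine difficulty. No group-theoretic input beyond associativity of $\cdot$ is needed, and the abelianness hypothesis is not used, consistent with the "skew" setting.
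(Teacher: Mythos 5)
Your proposal is correct and matches the paper's proof: both identities are obtained by applying the bijection $\varphi_a$ (resp.\ $\varphi_{ab}$) and invoking \eqref{eq:ccs2} (resp.\ \eqref{eq:ccs3}) together with the cancellation rules $a\bullet(a*x)=x=a*(a\bullet x)$. The only cosmetic difference is that for \eqref{eq:ccs:trick2} the paper runs the computation forward from $d=(ab)\bullet c$ while you apply $\varphi_{ab}$ to the right-hand side, which is the same argument read in the opposite direction.
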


\begin{proof}
 	Let $a,b,c\in A$. 
	Since 
	$a\bullet\left( (a*b)(a*c) \right)=(a\bullet(a*b))(a\bullet(a*c))=bc$, 
	Equation~\eqref{eq:ccs:trick1} follows. Now let 
	\[
	d=(ab)\bullet c=(a\bullet b)\bullet (a\bullet c).
	\]
	Then $(ab)*d=c=a*\left( (a\bullet b)*d \right)$ and the lemma is proved. 
\end{proof}

%\begin{defn}
%A \emph{linear skew cycle set} is a skew cycle set $(A,\cdot,\triangleleft)$, where 
%$a\triangleleft b=b^{-1}ab$ and 
%such that 
%\begin{align}
%&a\cdot (bc)=(a\cdot b)(a\cdot c),\\
%&(ab)\cdot c=(a\cdot b)\cdot (a\cdot c)
%\end{align}
%for all $a,b,c\in A$. 
%\end{defn}

Skew cycle sets form a category. 

For a group $A$ let $\Cs(A)$ be the full subcategory of skew cycle
sets whose objects are skew cycle set structures over $A$.

\begin{lem}
	\label{lem:brace2ccs}
	Let $A$ be a skew brace. Then the group $\Add{A}$ with 
	\[
	a\bullet
	b=\lambda_a^{-1}(b)=\overline{a}\circ (ab)
	\]
	is a skew cycle set.  Moreover, if $f\colon A\to A_1$ is a
	homomorphism of skew braces, then $f$ is a homomorphism of skew
	cycle sets.
\end{lem}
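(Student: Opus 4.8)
The plan is to verify directly that the operation $a\bullet b=\lambda_a^{-1}(b)$ satisfies the two axioms of a skew cycle set, using only that $\lambda$ is a homomorphism $\Mul{A}\to\Aut\Add{A}$ (Lemma~\ref{lem:lambda}) together with the compatibility identity~\eqref{eq:compatibility}, and then check functoriality. First I would record the key translation: since $\lambda_a\in\Aut\Add{A}$, each $\varphi_a=\lambda_a^{-1}$ is a bijection of $A$, and moreover $\lambda_a^{-1}(bc)=\lambda_a^{-1}(b)\lambda_a^{-1}(c)$, which is precisely~\eqref{eq:ccs2}. The identity $\overline{a}\circ(ab)=\lambda_{\overline a}(ab)=\lambda_a^{-1}(ab)$ comes from $\lambda_{\overline a}=\lambda_a^{-1}$, so the two displayed formulas for $a\bullet b$ agree.

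The substantive step is~\eqref{eq:ccs3}: $(ab)\bullet c=(a\bullet b)\bullet(a\bullet c)$, i.e.
\[
\lambda_{ab}^{-1}(c)=\lambda_{\lambda_a^{-1}(b)}^{-1}\bigl(\lambda_a^{-1}(c)\bigr).
\]
Equivalently, applying $\lambda_{ab}$ to both sides, this reads $c=\lambda_{ab}\lambda_{\lambda_a^{-1}(b)}^{-1}\lambda_a^{-1}(c)$ for all $c$, i.e. $\lambda_{ab}=\lambda_a\lambda_{\lambda_a^{-1}(b)}$ as automorphisms of $\Add{A}$. So the whole claim reduces to the operator identity
\[
\lambda_{ab}=\lambda_a\,\lambda_{\lambda_a^{-1}(b)},\qquad a,b\in A.
\]
This is essentially Bachiller's relation~\eqref{eq:lambda} read backwards: writing $b'=\lambda_a^{-1}(b)$, so that $b=\lambda_a(b')$ and hence $ab=a\lambda_a(b')=a\circ b'$ (using $\lambda_a(b')=a^{-1}(a\circ b')$), the desired identity becomes $\lambda_{a\circ b'}=\lambda_a\lambda_{b'}$ — which is exactly the statement that $\lambda\colon\Mul{A}\to\Aut\Add{A}$ is a homomorphism, Lemma~\ref{lem:lambda}. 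So I expect no genuine obstacle here; the one point requiring care is keeping straight that the product $ab$ in $\Add{A}$ corresponds under $\lambda_a^{-1}$ to a $\circ$-product, and that $\lambda_{\overline a}=\lambda_a^{-1}$ (from $\lambda$ being a $\circ$-homomorphism and $\overline a$ being the $\circ$-inverse of $a$).

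Finally, for the functoriality claim, let $f\colon A\to A_1$ be a skew brace homomorphism. Then $f$ is a group homomorphism $\Add{A}\to\Add{A_1}$ by definition, so it remains to check $f(a\bullet b)=f(a)\bullet f(b)$. Using the formula $a\bullet b=\overline a\circ(ab)$ and that $f$ preserves both $\cdot$ and $\circ$ (hence also $\circ$-inverses, so $f(\overline a)=\overline{f(a)}$), we get $f(a\bullet b)=f(\overline a)\circ(f(a)f(b))=\overline{f(a)}\circ(f(a)f(b))=f(a)\bullet f(b)$, as required. I would present the proof in exactly this order: first the bijectivity and~\eqref{eq:ccs2} as immediate consequences of $\lambda_a\in\Aut\Add{A}$; then~\eqref{eq:ccs3} by reducing to the homomorphism property of $\lambda$ via the substitution above; then the one-line verification of functoriality.
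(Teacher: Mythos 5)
Your proof follows the paper's route exactly: bijectivity and~\eqref{eq:ccs2} come from $\lambda_a\in\Aut\Add{A}$, \eqref{eq:ccs3} reduces to the homomorphism property $\lambda_{a\circ b'}=\lambda_a\lambda_{b'}$ via the substitution $b'=\lambda_a^{-1}(b)$ together with $ab=a\circ\lambda_a^{-1}(b)$, and the functoriality check is the same one-liner. The one step that fails as written is your justification that the two displayed formulas for $a\bullet b$ agree. The chain $\overline{a}\circ(ab)=\lambda_{\overline a}(ab)=\lambda_a^{-1}(ab)$ is wrong twice over: first, $\lambda_{\overline a}(x)=\overline a^{-1}(\overline a\circ x)$, not $\overline a\circ x$, so the first equality does not hold; second, even if it did, $\lambda_a^{-1}(ab)=\lambda_a^{-1}(a)\lambda_a^{-1}(b)$ is not the target $\lambda_a^{-1}(b)$. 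The identification is genuinely needed, because you verify the cycle set axioms with the formula $a\bullet b=\lambda_a^{-1}(b)$ but run the functoriality argument with $a\bullet b=\overline a\circ(ab)$; without their equality you would be proving two statements about two different operations. The repair is short: by~\eqref{eq:compatibility}, $\overline a\circ(ab)=(\overline a\circ a)\,\overline a^{-1}(\overline a\circ b)=\overline a^{-1}(\overline a\circ b)=\lambda_{\overline a}(b)=\lambda_a^{-1}(b)$; equivalently, $\overline a\circ(ab)=\overline a\circ a\circ\lambda_a^{-1}(b)=\lambda_a^{-1}(b)$. With that step replaced, everything else stands and matches the paper's argument.
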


\begin{proof}
	Each map $\varphi_a:b\mapsto a\bullet b$ is bijective. Let
	$a,b,c\in A$. To prove~\eqref{eq:ccs2} one uses that $\lambda\colon
	(A,\circ)\to\Aut(A,\cdot)$ is a group homomorphism:
	\begin{align*}
		a\bullet(bc) &=\lambda^{-1}_a(bc)
		=\lambda_{\overline{a}}(bc)
		=\lambda_{\overline{a}}(b)\lambda_{\overline{a}}(c)
		=(a\bullet b)(a\bullet c).
	\end{align*}
	To prove~\eqref{eq:ccs3} we compute
	\begin{align*}
		(a\bullet b)\bullet (a\bullet c) & 
		=\lambda^{-1}_{\lambda^{-1}_a(b)}(\lambda^{-1}_a(c))
		=\lambda^{-1}_{a\circ \lambda^{-1}_a(b)}(c)
		=\lambda^{-1}_{ab}(c)
		=(ab)\bullet c.
	\end{align*}
%	Now~\eqref{eq:ccs1} follows from~\eqref{eq:ccs2} since 
%	\[
%		(b\bullet(b^{-1}ab))\bullet(b\bullet c)
%		=(b(b^{-1}ab))\bullet c=(ab)\bullet c
%		=(a\bullet b)\bullet (a\bullet c).
%	\]
	To prove that $f$ is a skew cycle set homomorphism one
	computes
	\[
		f(a\bullet b)=f(\overline{a}\circ (ab))
		=\overline{f(a)}\circ (f(a)f(b))
		=f(a)\bullet f(b).
	\]
	This finishes the proof.
\end{proof}

\begin{lem}
	\label{lem:ccs2brace}
	Let $A$ be a skew cycle set. Then $A$ with 
	$\lambda_a(b)=a*b$, where $a*b=c$ if and only if $a\bullet c=b$, is a
	skew brace. Moreover, if $f\colon A\to A_1$ is a skew cycle
	set homomorphism, then $f$ is a skew brace homomorphism. 
\end{lem}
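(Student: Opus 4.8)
The plan is to recover a skew brace structure from the skew cycle set by defining the second operation via $a\circ b = a\lambda_a(b) = a(a*b)$ and then verifying the hypotheses of Lemma~\ref{lem:Bachiller}. That lemma reduces everything to two facts: that each $\lambda_a$ is an automorphism of $\Add{A}$, and that the cocycle-type identity $\lambda_{a\lambda_a(b)} = \lambda_a\lambda_b$ holds for all $a,b\in A$.

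First I would establish that $\lambda_a = \varphi_a^{-1}$ is a group automorphism of $\Add{A}$. Bijectivity is immediate from the definition of $*$ as the inverse of $\bullet$. For multiplicativity, I would apply the identity~\eqref{eq:ccs:trick1} of Lemma~\ref{lem:ccs:trick}, which says exactly $a*(bc) = (a*b)(a*c)$, i.e. $\lambda_a(bc) = \lambda_a(b)\lambda_a(c)$. (One should also note $\lambda_a(1)=1$, which follows since $\varphi_a(1)=a\bullet 1 = 1$ by setting $b=c=1$ in~\eqref{eq:ccs2}, hence $a*1=1$.) Next, for the key identity, I would use~\eqref{eq:ccs:trick2}, which reads $(ab)*c = a*((a\bullet b)*c)$; writing $b' = a*b$ so that $a\bullet b' = b$ and $ab' = a\lambda_a(b') $... more directly, substitute: for any $x$, set $c = a\bullet b$ applied appropriately — the cleanest route is to take~\eqref{eq:ccs3}, $(ab)\bullet c = (a\bullet b)\bullet(a\bullet c)$, and invert it. Inverting gives, for the corresponding $*$-operations, $\lambda_{ab} = \lambda_{a\bullet b}\circ(\text{something})$; I would rather rewrite~\eqref{eq:ccs:trick2} with $b$ replaced by $\lambda_a(b) = a*b$: then $a\bullet b$ becomes $a\bullet(a*b) = b$, wait — one wants $a\lambda_a(b)$ to appear as the subscript. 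The correct manipulation: in~\eqref{eq:ccs:trick2} replace the symbol "$b$" by "$\lambda_a(b)$"; since $a\bullet \lambda_a(b) = a\bullet(a*b)$ and $a*(a\bullet x)=x$ needs care, I would instead directly verify $\lambda_{a\lambda_a(b)}(c) = \lambda_a\lambda_b(c)$ by applying $\varphi_{a\lambda_a(b)}$ to both sides and using~\eqref{eq:ccs3} with the pair $(a,\lambda_a(b))$: $\varphi_{a\lambda_a(b)}(\lambda_a\lambda_b(c)) = (a\,\lambda_a(b))\bullet(a*(b*c))$, and by~\eqref{eq:ccs3} this equals $(a\bullet\lambda_a(b))\bullet(a\bullet(a*(b*c))) = b\bullet(b*c) = c$, as desired. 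So $\lambda_{a\lambda_a(b)}\lambda_a\lambda_b = \id$ is wrong sign; rather $\varphi_{a\lambda_a(b)}\circ\lambda_a\lambda_b = \id$ on $c$ shows $\lambda_{a\lambda_a(b)} = (\varphi_{a\lambda_a(b)})^{-1} = \lambda_a\lambda_b$.

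With~\eqref{eq:lambda} verified, Lemma~\ref{lem:Bachiller} applies and $A$ with $a\circ b = a\lambda_a(b)$ is a skew brace; moreover its associated $\lambda$-map is the given one. Finally, for the functoriality statement: if $f\colon A\to A_1$ is a skew cycle set homomorphism, then $f$ is a group homomorphism for $\Add{A}$ by definition, and $f(a*b) = f(a)*f(b)$ follows by applying $\varphi_{f(a)}$ to both sides and using $f(a\bullet x) = f(a)\bullet f(x)$ together with bijectivity of $\varphi_{f(a)}$; hence $f(a\circ b) = f(a\,\lambda_a(b)) = f(a)\,f(\lambda_a(b)) = f(a)\,\lambda_{f(a)}(f(b)) = f(a)\circ f(b)$.

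The main obstacle is purely bookkeeping: correctly translating the two cycle-set axioms~\eqref{eq:ccs2}--\eqref{eq:ccs3} (and their $*$-versions from Lemma~\ref{lem:ccs:trick}) into statements about $\lambda$ without sign or direction errors, since $\lambda_a = \varphi_a^{-1}$ inverts everything. I expect the cleanest presentation applies $\varphi$'s to both sides of each desired $\lambda$-identity so that one only ever uses the forward axioms~\eqref{eq:ccs2} and~\eqref{eq:ccs3}; the auxiliary Lemma~\ref{lem:ccs:trick} may then not even be needed, but it is available if a direct route is preferred.
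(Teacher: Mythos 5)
Your proposal is correct and takes essentially the same route as the paper: check that each $\lambda_a=\varphi_a^{-1}$ is multiplicative and that $\lambda_{a\lambda_a(b)}=\lambda_a\lambda_b$, then invoke Lemma~\ref{lem:Bachiller}, and deduce the homomorphism statement from $f(a*b)=f(a)*f(b)$. The only cosmetic difference is that the paper derives the key identity from Equation~\eqref{eq:ccs:trick2} of Lemma~\ref{lem:ccs:trick}, whereas you obtain it by applying $\varphi_{a\lambda_a(b)}$ and using~\eqref{eq:ccs3} directly.
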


\begin{proof}
	Let $\lambda\colon A\to\Sym_A$ be given by $a\mapsto \lambda_a$. Let
	$a,b,c\in A$.  First we notice that
	$\lambda_a(bc)=\lambda_a(b)\lambda_a(c)$ since 
	\[
		\lambda_a(bc)=a*(bc)=(a*b)(a*c)=\lambda_a(b)\lambda_a(c)
	\]
	by Lemma~\ref{lem:ccs:trick}, Equation~\eqref{eq:ccs:trick1}.

	To prove that $\lambda_{a\lambda_a(b)}(c)=\lambda_a\lambda_b(c)$ holds we
	use Lemma~\ref{lem:ccs:trick}, Equation~\eqref{eq:ccs:trick2} to obtain
	that
	\begin{align*}
	\lambda_a\lambda_b(c)&=a*(b*c)
	=a*\left( (a\bullet(a*b))*c \right)\\
	&=(a(a*b))*c
	=(a\lambda_a(b))*c
	=\lambda_{a\lambda_a(b)}(c).
	\end{align*}

	Now since $\lambda_a(bc)=\lambda_a(b)\lambda_a(c)$ and
	$\lambda_{a\lambda_a(b)}(c)=\lambda_a\lambda_b(c)$ hold, $A$ is a skew
	brace by Lemma~\ref{lem:Bachiller}.

	Finally the map $f$ is a skew brace homomorphism since
	\begin{align*}
		f(a\circ b)&=f(a(a*b))
		=f(a)f(a*b)\\
		&=f(a)\left( f(a)*f(b) \right)
		=f(a)\lambda_{f(a)}(f(b))
		=f(a)\circ f(b)
	\end{align*}
	for all $a,b\in A$. 
\end{proof}

Lemmas~\ref{lem:brace2ccs} and~\ref{lem:ccs2brace} yield the following result:

\begin{thm}
	\label{thm:ccs}
    Let $A$ be a group. The categories $\BrAD(A)$ and $\Cs(A)$ are equivalent.
\end{thm}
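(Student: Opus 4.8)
The plan is to exhibit the equivalence as a consequence of Lemmas~\ref{lem:brace2ccs} and~\ref{lem:ccs2brace}, checking only that the two constructions are mutually inverse. First I would spell out the two functors explicitly: let $F\colon\BrAD(A)\to\Cs(A)$ send a skew brace structure $(A,\cdot,\circ)$ to the skew cycle set $(A,\cdot,\bullet)$ with $a\bullet b=\lambda_a^{-1}(b)$, and let $G\colon\Cs(A)\to\BrAD(A)$ send a skew cycle set $(A,\cdot,\bullet)$ to the skew brace with $\lambda_a(b)=a*b$, equivalently $a\circ b=a\,(a*b)$. By Lemma~\ref{lem:brace2ccs} the construction $F$ lands in $\Cs(A)$ and is functorial (it fixes underlying maps, and brace homomorphisms become skew cycle set homomorphisms); by Lemma~\ref{lem:ccs2brace} the same holds for $G$. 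So both $F$ and $G$ are well-defined functors that are the identity on morphisms, and it remains to check $GF=\id$ and $FG=\id$ on objects.

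For $GF=\id$: start with a skew brace with map $\lambda$. Its image under $F$ has $a\bullet b=\lambda_a^{-1}(b)$, so the inverse operation is $a*b=\lambda_a(b)$ (since $a\bullet c=b$ means $\lambda_a^{-1}(c)=b$, i.e.\ $c=\lambda_a(b)$). Applying $G$, the recovered $\lambda$-map is $a\mapsto(b\mapsto a*b)=\lambda_a$, which is the original; hence the recovered multiplication $a\circ b=a\lambda_a(b)$ agrees with the original $\circ$ by Lemma~\ref{lem:lambda}, and the additive group is untouched. For $FG=\id$: start with a skew cycle set $(A,\cdot,\bullet)$. Under $G$ we get $\lambda_a(b)=a*b$. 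Under $F$ the new operation is $a\bullet' b=\lambda_a^{-1}(b)$; since $\lambda_a$ is the permutation $b\mapsto a*b$, its inverse is $b\mapsto a\bullet b$ (because $*$ and $\bullet$ are defined to be mutually inverse), so $a\bullet' b=a\bullet b$. Thus the original skew cycle set is recovered on the nose.

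The only genuine content beyond bookkeeping is confirming that the passage through $G$ does produce a structure whose $\lambda_a^{-1}$ is literally $\varphi_a$ — i.e.\ that the bijectivity of $\varphi_a$ in the definition of a skew cycle set matches the automorphism property of $\lambda_a$ coming from Lemma~\ref{lem:lambda}; but this is immediate since $\varphi_a=\lambda_a$ as set maps and a bijection that respects $\cdot$ (which holds by~\eqref{eq:ccs:trick1}) is an automorphism. I would also note that identity objects and composition are clearly preserved because both functors act as the identity on underlying sets and on morphisms. The main obstacle, such as it is, is purely notational: keeping the operations $\bullet$, $*$, and the $\lambda$-map disentangled while tracking which is defined as the inverse of which. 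I would write the proof as:

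\begin{proof}
	Let $F\colon\BrAD(A)\to\Cs(A)$ be the functor of Lemma~\ref{lem:brace2ccs}, sending a skew brace to the skew cycle set with $a\bullet b=\lambda_a^{-1}(b)$, and let $G\colon\Cs(A)\to\BrAD(A)$ be the functor of Lemma~\ref{lem:ccs2brace}, sending a skew cycle set to the skew brace with $\lambda_a(b)=a*b$. Both functors are the identity on underlying sets and on morphisms, so it suffices to check that $GF$ and $FG$ are the identity on objects.

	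Let $A$ be a skew brace with map $\lambda$. In $F(A)$ we have $a\bullet b=\lambda_a^{-1}(b)$, hence $a*b=\lambda_a(b)$. Applying $G$, the reconstructed map is $a\mapsto(b\mapsto a*b)=\lambda_a$, so the reconstructed operation $a\,(a*b)=a\lambda_a(b)$ equals $a\circ b$ by Lemma~\ref{lem:lambda}; the additive group is unchanged. Thus $GF(A)=A$.

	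Conversely, let $(A,\cdot,\bullet)$ be a skew cycle set. In $G(A)$ we have $\lambda_a(b)=a*b$. Applying $F$ gives the operation $b\mapsto\lambda_a^{-1}(b)$; since the permutation $\lambda_a$ is $b\mapsto a*b$ and $\bullet,*$ are mutually inverse, $\lambda_a^{-1}$ is the map $b\mapsto a\bullet b$. Hence $FG(A)=(A,\cdot,\bullet)$. This proves that $\BrAD(A)$ and $\Cs(A)$ are equivalent.
\end{proof}
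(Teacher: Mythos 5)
Your proposal is correct and follows exactly the route the paper takes: the paper simply states that Lemmas~\ref{lem:brace2ccs} and~\ref{lem:ccs2brace} yield the theorem, and you have filled in the (routine but worth recording) verification that the two constructions are mutually inverse on objects and act as the identity on morphisms, in fact giving an isomorphism of categories. No gaps; the key observation that $*$ recovers $\lambda$ and $\bullet$ recovers $\lambda^{-1}$ is exactly what makes the round trips the identity.
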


%\begin{proof}
%	By Lemma~\ref{lem:brace2ccs} there is a functor $F\colon\Br(A)\to\Cs(A)$.
%	The inverse functor to $F$ is the functor $G\colon\Cs(A)\to\Br(A)$ of
%	Lemma~\ref{lem:ccs2brace} since clearly $G\circ F=\id_{\Br(A)}$ and $F\circ
%	G=\id_{\Cs(A)}$.  
%\end{proof}

\subsection{Matched pairs of groups}
\label{matched_pair_of_groups}

For a given group $\Mul{A}$ let $\Mp(A)$ be the category with objects the
matched pairs $(A,A)$ such that 
\begin{equation}
	\label{eq:matched}
a\circ b=(a\rightharpoonup b)\circ (a\leftharpoonup
b)
\end{equation}
for all $a,b\in A$ and morphisms all group homomorphisms $f\colon A\to A$ such
that 
\[
f(a\rightharpoonup b)=f(a)\rightharpoonup f(b),
\quad
f(a\leftharpoonup b)=f(a)\leftharpoonup f(b)
\]
for all $a,b\in A$.

\begin{lem}
	\label{lem:brace2matched}
	Let $A$ be a skew brace. Then $(\Mul{A},\Mul{A})$ is a matched pair of groups with 
	$a\rightharpoonup b=\lambda_a(b)$ and $a\leftharpoonup b=\mu_b(a)$, $a,b\in A$.
\end{lem}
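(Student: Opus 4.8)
The plan is to verify directly that the two maps $\rightharpoonup$ and $\leftharpoonup$ satisfy the four defining conditions of a matched pair of groups: that $a\mapsto(x\mapsto a\rightharpoonup x)$ is a left action of $\Mul{A}$ on itself, that $a\mapsto(x\mapsto x\leftharpoonup a)$ is a right action of $\Mul{A}$ on itself, and the two compatibility identities~\eqref{eq:matched1} and~\eqref{eq:matched2}. The first action is immediate: by Lemma~\ref{lem:lambda}, $\lambda\colon\Mul{A}\to\Aut\Add{A}\subseteq\Sym_A$ is a group homomorphism, so $a\rightharpoonup(a'\rightharpoonup x)=\lambda_a\lambda_{a'}(x)=\lambda_{a\circ a'}(x)=(a\circ a')\rightharpoonup x$, and $\lambda_1=\id$. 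Dually, the second action follows from Lemma~\ref{lem:mu}: the identity $\mu_{a\circ b}=\mu_b\mu_a$ together with $\mu_1=\id$ says precisely that $x\leftharpoonup a=\mu_a(x)$ defines a right action of $\Mul{A}$ on $A$ (note the reversal matches the ``$\leftharpoonup$'' convention in~\eqref{eq:matched2}, where $(bb')\leftharpoonup a$ decomposes with $b'$ acting first).

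Next I would verify the two mixed compatibility conditions. For~\eqref{eq:matched1}, I need
\[
a\rightharpoonup(bc)=(a\rightharpoonup b)\bigl((a\leftharpoonup b)\rightharpoonup c\bigr),
\]
i.e. $\lambda_a(b\circ c)=\lambda_a(b)\circ\lambda_{\mu_b(a)}(c)$, where the products and the symbol ``$bc$'' here are taken in $\Mul{A}$ (so what is written $bc$ in~\eqref{eq:matched1} is $b\circ c$ for us). Expanding the right side using $\lambda_x(y)=x^{-1}(x\circ y)$ and the definition of $\mu$, this should reduce to the compatibility condition~\eqref{eq:compatibility} after substituting $\mu_b(a)=\overline{\lambda_a(b)}\circ a\circ b$ and simplifying; the key cancellation is that $\lambda_a(b)\circ\lambda_{\mu_b(a)}(c)=\lambda_a(b)\circ\overline{\lambda_a(b)}\circ\bigl(\mu_b(a)\circ c\bigr)\cdot(\text{correction})$ collapses once one uses that $\lambda$ is a homomorphism and rewrites everything in terms of $a\circ b\circ c$. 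For~\eqref{eq:matched2} I need $(b\circ b')\leftharpoonup a$ to factor as $\bigl(b\leftharpoonup(b'\rightharpoonup a)\bigr)(b'\leftharpoonup a)$, which is $\mu_a(b\circ b')=\mu_{\lambda_{b'}(a)}(b)\circ\mu_a(b')$; this is the ``dual'' of the first identity and I expect it to follow symmetrically, or alternatively to be extractable from Lemma~\ref{lem:mu} combined with the brace axiom.

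Finally, the extra normalization~\eqref{eq:matched}, namely $a\circ b=(a\rightharpoonup b)\circ(a\leftharpoonup b)=\lambda_a(b)\circ\mu_b(a)$, is essentially built into the definition of $\mu$: since $\mu_b(a)=\overline{\lambda_a(b)}\circ a\circ b$, we get $\lambda_a(b)\circ\mu_b(a)=\lambda_a(b)\circ\overline{\lambda_a(b)}\circ a\circ b=a\circ b$ directly. I would close by remarking that this last computation is what forced the particular choice of $\mu$ in Lemma~\ref{lem:mu}. The main obstacle I anticipate is purely bookkeeping: keeping straight which products are $\cdot$ and which are $\circ$ in~\eqref{eq:matched1}--\eqref{eq:matched2} when transporting them to our setting where the matched pair consists of two copies of $\Mul{A}$, and correctly threading the inverses $\overline{\phantom{x}}$ versus ${}^{-1}$ through the expansion of $\mu$; once the notation is pinned down, each identity should reduce to~\eqref{eq:compatibility} or to Lemma~\ref{lem:mu} in a couple of lines.
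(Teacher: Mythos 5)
Your plan is essentially the paper's proof: the left and right actions come from Lemmas~\ref{lem:lambda} and~\ref{lem:mu}, and the two mixed conditions, which you correctly rewrite (with all products taken in $\Mul{A}$) as $\lambda_a(b\circ c)=\lambda_a(b)\circ\lambda_{\mu_b(a)}(c)$ and $\mu_a(b\circ b')=\mu_{\lambda_{b'}(a)}(b)\circ\mu_a(b')$, are verified exactly as you indicate by substituting $\mu_b(a)=\overline{\lambda_a(b)}\circ a\circ b$ and using that $\lambda$ is a homomorphism $\Mul{A}\to\Aut\Add{A}$. The only loose end, your ``(correction)'' cancellation, is precisely $\lambda_{\mu_b(a)}(c)=\lambda_{\overline{\lambda_a(b)}}\lambda_{a\circ b}(c)$ combined with $x\circ\lambda_{\overline{x}}(y)=xy$ and $\lambda_a(b\circ c)=\lambda_a(b)\lambda_{a\circ b}(c)$, which is how the paper closes the same computation, and your check of~\eqref{eq:matched} via $\lambda_a(b)\circ\mu_b(a)=a\circ b$ is correct (and needed later for Theorem~\ref{thm:matched}).
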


\begin{proof}
	Lemma~\ref{lem:lambda} proves that $\lambda$ is a left action and
	Lemma~\ref{lem:mu} proves that $\mu$ is a right action.
	Thus we need to prove that 
	\begin{align}
		\label{eq:matched1brace}a\rightharpoonup (b\circ b')=(a\rightharpoonup b)\circ ( (a\leftharpoonup b)\rightharpoonup b'),\\
		\label{eq:matched2brace}(a\circ a')\leftharpoonup b=(a\leftharpoonup (a'\rightharpoonup b))\circ (a'\leftharpoonup b)
	\end{align}
	hold for all $a,a',b,b'\in A$. 
	
	For $a,a',b\in A$ one obtains that
	\begin{align*}
		(a\leftharpoonup (a'\rightharpoonup b))\circ (a'\leftharpoonup b) 
		&=\overline{\lambda_a\lambda_{a'}(b)}\circ a\circ \lambda_{a'}(b)\circ\overline{\lambda_{a'}(b)}\circ a'\circ b\\
		&=\overline{\lambda_{a\circ a'}(b)}\circ a\circ a'\circ b\\
		&=(a\circ a')\leftharpoonup b.
	\end{align*}
	
	For $a,b,b'\in A$ one obtains that
	\begin{align*}
		(a\rightharpoonup b)\circ ( (a\leftharpoonup b)\rightharpoonup b') 
		&=\lambda_a(b)\circ\left(\overline{\lambda_a(b)}\circ a\circ b\rightharpoonup b'\right)\\
		&=\lambda_a(b)\circ\lambda_{\overline{\lambda_a(b)}}\lambda_{a\circ b}(b')\\
		&=\lambda_a(b)\lambda_{a\circ b}(b')\\
		&=a\rightharpoonup (b\circ b').
	\end{align*}
	This completes the proof.
\end{proof}

\begin{lem}
	\label{lem:matched2brace}
	Let $\Mul{A}$ be a group and $(A,A,\rightharpoonup,\leftharpoonup)$
	be a matched pair of groups such that $a\circ b=(a\rightharpoonup b)\circ
	(a\leftharpoonup b)$ for all $a,b\in A$. Then $A$ with
	\[
		ab=a\circ(\overline{a}\rightharpoonup b)
	\]
	is a skew brace.
\end{lem}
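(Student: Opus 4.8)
The plan is to use Lemma~\ref{lem:Bachiller}: I will exhibit a map $\lambda\colon A\to\Sym_A$, check that each $\lambda_a$ is an automorphism of the (yet to be established) additive group, verify the cocycle-type identity $\lambda_{a\lambda_a(b)}=\lambda_a\lambda_b$, and conclude that $A$ with $a\circ'b=a\lambda_a(b)$ is a skew brace whose multiplicative operation coincides with the given $\circ$. The natural candidate, dictated by the definition $ab=a\circ(\overline a\rightharpoonup b)$, is $\lambda_a(b)=a\rightharpoonup b$. Note first that the operation $ab=a\circ(\overline a\rightharpoonup b)$ has a right identity, namely the common neutral element $1$ (using $\overline a\rightharpoonup 1=1$ since $\rightharpoonup$ is an action by automorphisms), and that $a\mapsto a\circ a = a\circ(\overline a\rightharpoonup(\text{something}))$ inverts: the right inverse of $a$ is $a\circ(\overline{a}\rightharpoonup ?)$ — more cleanly, I expect $a^{-1}=\overline a\circ(a\rightharpoonup\overline a)$ or an analogous expression, which one checks directly.

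The key computation is associativity of $ab=a\circ(\overline a\rightharpoonup b)$, which is where the matched-pair axioms~\eqref{eq:matched1} and the compatibility~\eqref{eq:matched} (i.e. $a\circ b=(a\rightharpoonup b)\circ(a\leftharpoonup b)$) enter. I would compute
\[
(ab)c=\bigl(a\circ(\overline a\rightharpoonup b)\bigr)\circ\bigl(\overline{a\circ(\overline a\rightharpoonup b)}\rightharpoonup c\bigr),
\]
expand $\overline{a\circ(\overline a\rightharpoonup b)}=\overline{(\overline a\rightharpoonup b)}\circ\overline a$, use the action property $x\rightharpoonup(y\rightharpoonup z)$ versus $(x\circ y)\rightharpoonup z$ (a left action means $(x\circ y)\rightharpoonup z=x\rightharpoonup(y\rightharpoonup z)$), and the compatibility identity to rewrite $a\circ(\overline a\rightharpoonup b)$ in terms of $\leftharpoonup$. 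After simplification this should match $a(bc)=a\circ(\overline a\rightharpoonup(b\circ(\overline b\rightharpoonup c)))$. This algebraic bookkeeping — tracking how $\overline{\phantom{x}}$ interacts with $\rightharpoonup$ and $\leftharpoonup$ through~\eqref{eq:matched1}, \eqref{eq:matched2} and~\eqref{eq:matched} — is the main obstacle, since there are several places where one must substitute the compatibility relation in exactly the right direction.

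Once $\Add A$ is a group, it remains to verify that $\lambda_a(b)=a\rightharpoonup b$ is an automorphism of $\Add A$ and satisfies~\eqref{eq:lambda}. For the automorphism property: $\lambda_a(bc)=a\rightharpoonup\bigl(b\circ(\overline b\rightharpoonup c)\bigr)$, and applying~\eqref{eq:matched1} with the pair $(b,\overline b\rightharpoonup c)$ together with~\eqref{eq:matched} gives $(a\rightharpoonup b)\circ((a\leftharpoonup b)\rightharpoonup(\overline b\rightharpoonup c))$; one then identifies this with $\lambda_a(b)\lambda_a(c)=(a\rightharpoonup b)\circ\bigl(\overline{a\rightharpoonup b}\rightharpoonup(a\rightharpoonup c)\bigr)$, which forces checking $(a\leftharpoonup b)\rightharpoonup(\overline b\rightharpoonup c)=\overline{a\rightharpoonup b}\rightharpoonup(a\rightharpoonup c)$, an identity that follows from the action axioms and compatibility. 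For~\eqref{eq:lambda}, $\lambda_{a\lambda_a(b)}=\lambda_{a\circ b}=$ (the map $c\mapsto (a\circ b)\rightharpoonup c=a\rightharpoonup(b\rightharpoonup c))=\lambda_a\lambda_b$, which is immediate from the left-action property and the fact that $a\lambda_a(b)=a\circ b$ by construction. Then Lemma~\ref{lem:Bachiller} yields that $A$ is a skew brace, and by design its multiplicative operation is the original $\circ$. I would present the associativity and automorphism checks as the two displayed multi-line computations, leaving the identity-and-inverse verifications as one-liners.
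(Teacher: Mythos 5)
Your plan is workable and all the checks you identify do go through, but it takes a genuinely different route from the paper. You build the additive group from scratch and then invoke Lemma~\ref{lem:Bachiller}, which goes from a known additive group to the multiplicative one; the paper instead invokes Lemma~\ref{lem:dual}, the dual statement tailored to exactly this situation, where the \emph{multiplicative} group is the given datum. With Lemma~\ref{lem:dual} the entire burden of proving that $ab=a\circ\lambda_a^{-1}(b)$ is associative with identity and inverses is already discharged once and for all, so the paper's proof reduces to a single three-line verification of Equation~\eqref{eq:dual}, namely $\lambda_a(b\circ\lambda_b^{-1}(c))=\lambda_a(b)\circ\lambda^{-1}_{\lambda_a(b)}\lambda_a(c)$, using $a\leftharpoonup b=\overline{\lambda_a(b)}\circ a\circ b$ (which is just \eqref{eq:matched} rearranged) and the two action axioms. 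Your route forces you to redo by hand what amounts to the proof of Lemma~\ref{lem:dual}: the associativity computation you defer does close --- the key step is that $\overline{a}\leftharpoonup b=\overline{(\overline{a}\rightharpoonup b)}\circ\overline{a}\circ b$, so $(\overline{a}\leftharpoonup b)\rightharpoonup(\overline{b}\rightharpoonup c)=(\overline{(\overline{a}\rightharpoonup b)}\circ\overline{a})\rightharpoonup c$, matching the expansion of $(ab)c$ --- and your automorphism and cocycle checks are correct as stated (indeed $a\lambda_a(b)=a\circ b$, so \eqref{eq:lambda} is immediate from $\rightharpoonup$ being a left action, and the recovered multiplication is the original $\circ$). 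Two small corrections: the right inverse for the additive operation is $a^{-1}=a\rightharpoonup\overline{a}=\lambda_a(\overline{a})$, not $\overline{a}\circ(a\rightharpoonup\overline{a})$; and $\lambda_a(1)=1$ is not automatic from $\rightharpoonup$ being an action by permutations --- it is deduced from \eqref{eq:matched} with $b=1$. In short, both arguments are correct; the paper's buys brevity by having isolated the right auxiliary lemma, while yours is self-contained at the cost of repeating that lemma's content.
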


\begin{proof}
	For $a,b\in A$ write $\lambda_a(b)=a\rightharpoonup b$. Then $\lambda\colon
	A\to\Sym_A$, $a\mapsto\lambda_a$, is a well-defined group homomorphism.
	Equation~\eqref{eq:matched} implies that $\lambda_a(1)=1$ for all $a$. 
	Since 
	\begin{align*}
		\lambda_a(b\circ \lambda^{-1}_b(c))
		&=\lambda_a(b)\circ(\lambda_{a\leftharpoonup b}\lambda^{-1}_b(c))\\
		&=\lambda_a(b)\circ\lambda_{\overline{\lambda_a(b)}\circ a\circ b}\lambda^{-1}_b(c)\\
		&=\lambda_a(b)\circ\lambda^{-1}_{\lambda_a(b)}\lambda_a(c),
	\end{align*}
	the claim follows from Lemma~\ref{lem:dual}
\end{proof}

For a given group $A$, let $\BrMU(A)$ be the full subcategory of the category of
skew braces with multiplicative group $A$.  
Combining
Lemma~\ref{lem:brace2matched} and Lemma~\ref{lem:matched2brace} one gets the
following result:

\begin{thm}
\label{thm:matched}
    Let $A$ be a group. 
	The categories $\BrMU(A)$ and $\Mp(A)$ are equivalent. 
\end{thm}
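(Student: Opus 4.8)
The goal is to prove that for a fixed group $A$ (which we think of as the prospective multiplicative group), the category $\BrMU(A)$ of skew braces with multiplicative group $A$ is equivalent to the category $\Mp(A)$ of matched pairs $(A,A)$ satisfying the compatibility $a\circ b=(a\rightharpoonup b)\circ(a\leftharpoonup b)$, with the morphisms as specified. The strategy is exactly as signalled by the sentence preceding the statement: construct functors in both directions using Lemma~\ref{lem:brace2matched} and Lemma~\ref{lem:matched2brace}, then check they are mutually quasi-inverse. Since both functors are already defined on objects by those two lemmas, and both act as the identity on underlying sets and send group homomorphisms to group homomorphisms, the content reduces to (i) checking functoriality on morphisms in each direction, and (ii) checking that the two round-trip composites are the identity functors.

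\medskip
\textbf{Step 1: the functors on objects.} Define $F\colon\BrMU(A)\to\Mp(A)$ by sending a skew brace $A$ (with multiplicative group the fixed group) to the matched pair with $a\rightharpoonup b=\lambda_a(b)$ and $a\leftharpoonup b=\mu_b(a)$; Lemma~\ref{lem:brace2matched} shows this is a well-defined object of $\Mp(A)$, the compatibility $a\circ b=(a\rightharpoonup b)\circ(a\leftharpoonup b)$ being precisely $a\circ b=\lambda_a(b)\circ\mu_b(a)$, which holds since $\mu_b(a)=\overline{\lambda_a(b)}\circ a\circ b$. Define $G\colon\Mp(A)\to\BrMU(A)$ by sending a matched pair to the skew brace with additive operation $ab=a\circ(\overline{a}\rightharpoonup b)$; Lemma~\ref{lem:matched2brace} gives well-definedness, and one checks the multiplicative group of $G$(matched pair) is indeed the fixed group $A$ because $a\circ b$ is unchanged.

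\medskip
\textbf{Step 2: functoriality on morphisms.} A morphism in $\BrMU(A)$ is a brace homomorphism $f$; I must check $f$ respects $\rightharpoonup$ and $\leftharpoonup$, i.e. $f(\lambda_a(b))=\lambda_{f(a)}(f(b))$ and $f(\mu_b(a))=\mu_{f(b)}(f(a))$. The first is immediate from $f$ respecting $\cdot$ and $\circ$ (since $\lambda_a(b)=a^{-1}(a\circ b)$), and the second follows from the first together with the formula $\mu_b(a)=\overline{\lambda_a(b)}\circ a\circ b$, using that $f$ commutes with $\circ$-inverses. Conversely, a morphism in $\Mp(A)$ is a group homomorphism $f\colon A\to A$ respecting $\rightharpoonup,\leftharpoonup$; I must check it is a brace homomorphism for the brace structures produced by $G$, i.e. $f(ab)=f(a)f(b)$ where $ab=a\circ(\overline{a}\rightharpoonup b)$ — this follows since $f$ respects $\circ$, hence $\circ$-inverses, hence $\overline{a}$, hence $\overline{a}\rightharpoonup b$. (Note $f$ respecting $\circ$ is part of being a morphism in $\Mp(A)$ only implicitly: a morphism of matched pairs must respect the group structure giving the biproduct; but here the relevant fact is that the underlying self-map of $A$ is the same, and "$f$ respects $\circ$" is exactly what it means to be an endomorphism of the group $A=\Mul{A}$.)

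\medskip
\textbf{Step 3: the round trips.} For $G\circ F$: starting from a skew brace, $F$ produces $\rightharpoonup=\lambda$, and $G$ then builds the brace with additive operation $a\bullet b:=a\circ(\overline{a}\rightharpoonup b)=a\circ(\overline{a}\circ(ab))=ab$, recovering the original $\cdot$; and $\circ$ is untouched, so $G\circ F=\id$. For $F\circ G$: starting from a matched pair $(\rightharpoonup,\leftharpoonup)$, $G$ produces a brace with $ab=a\circ(\overline{a}\rightharpoonup b)$ whose $\lambda_a(b)=a^{-1}(a\circ b)$; one computes $a^{-1}$ in this new additive group and checks $\lambda_a(b)=a\rightharpoonup b$, so $F$ recovers $\rightharpoonup$. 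Recovering $\leftharpoonup$ then uses the matched-pair compatibility $a\circ b=(a\rightharpoonup b)\circ(a\leftharpoonup b)$ to solve $a\leftharpoonup b=\overline{(a\rightharpoonup b)}\circ a\circ b=\mu_b(a)$, so $F$ recovers $\leftharpoonup$ as well. The main obstacle is the $F\circ G$ round trip: one has to carefully track how the additive inverse $a^{-1}$ in the newly built group is expressed (as in the proof of Lemma~\ref{lem:dual}, $a^{-1}=\lambda_a(\overline a)$ there, with an analogous expression here), and verify that $\leftharpoonup$ is forced by the compatibility relation rather than being independent data — this is exactly where the defining equation of $\Mp(A)$ is essential, and it is the one place where a genuine (short) computation is needed rather than a formal bookkeeping argument. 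Everything else is immediate from Lemmas~\ref{lem:brace2matched} and~\ref{lem:matched2brace} plus the observation that all maps in sight are identities on underlying sets.
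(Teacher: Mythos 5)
Your proposal is correct and follows the same route as the paper: the paper's entire proof is the single sentence ``Combining Lemma~\ref{lem:brace2matched} and Lemma~\ref{lem:matched2brace} one gets the following result,'' and your Steps 1--3 simply fill in the functoriality and round-trip checks that the paper leaves implicit (all of which you carry out correctly, e.g.\ $a\circ(\overline a\rightharpoonup(a\rightharpoonup b))=a\circ b$ recovers $\lambda=\rightharpoonup$, and the compatibility $a\circ b=(a\rightharpoonup b)\circ(a\leftharpoonup b)$ forces $\leftharpoonup=\mu$).
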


%\begin{proof}
%	The functor $F\colon\Br(A)\to\Mp(A)$ is given by
%	Lemma~\ref{lem:brace2matched}. Lemma~\ref{lem:matched2brace}
%	yields a functor $G\colon\Mp(A)\to\Br(A)$. A routine calculation shows that
%	these functors realize the desired equivalence. 
%\end{proof}

\begin{rem}
	Theorem~\ref{thm:matched} is implicit in the work of Lu, Yan and Zhu,
	see~\cite[Theorem 2]{MR1769723} and~\cite{MR2024436}.  The result for
	classical braces was proved by Gateva-Ivanova; see~\cite[Theorem
	3.7]{GI15}.  Our proof of Theorem~\ref{thm:matched} is essentially that of
	Gateva-Ivanova. 
\end{rem}

\appendix
\section{Hopf--Galois extensions}
\begin{center}
  {\small\scshape (By N. Byott and L. Vendramin)}
\end{center}

In this appendix we review the connection between skew braces and Hopf--Galois
extensions.  
This connection was first observed by Bachiller in~\cite[\S2]{MR3465351}. 

Let $K$ be a field and let $H$ be a cocommutative Hopf algebra over $K$. An
$H$-module algebra $A$ over $K$ is an $H$-Galois extension of $K$ if the map 
\[
\theta \colon A\otimes_K A\to\Hom_K(H,A),\quad
\theta(a\otimes b)(h)=a(h\cdot b),
\]
is bijective. 

Let $K\subseteq L$ be a finite extension of fields. A
Hopf--Galois structure on $L/K$ consists of a Hopf algebra $H$ over $K$ and an
action of $H$ on $L$ such that $L$ is an $H$-Galois extension of $K$.
In~\cite{MR878476}, Greither and Pareigis showed how to find all Hopf-Galois structures when $L/K$ is separable. For simplicity, we consider only the case where $L/K$ is also normal, so that $L/K$ is a Galois extension in the classical sense. We then have:

\begin{thm}[Greither--Pareigis]
  \label{thm:GP}
	Let $K\subseteq L$ be a finite Galois field extension with group $G$.
	Then Hopf--Galois extensions on $L/K$ correspond bijectively to regular
	subgroups $A$ of $\Sym_G$ normalized by $G$, where $G$ is considered as
	a subgroup of $\Sym_G$ by the regular left representation.
\end{thm}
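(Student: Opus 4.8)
The plan is to prove this correspondence by Galois descent: base change everything to $L$, where the relevant algebra splits and the classification becomes transparent, and then recognise the descent datum as exactly the normalising condition.

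First I would reduce to the split case. A Hopf--Galois structure on $L/K$ is a pair consisting of a $K$-Hopf algebra $H$ together with an action making $L$ an $H$-Galois extension of $K$. Applying $-\otimes_K L$ produces a Hopf--Galois structure of the $L$-Hopf algebra $H\otimes_K L$ on the $L$-algebra $L\otimes_K L$, and one checks that the Galois map $\theta$ base changes to the corresponding map, so bijectivity is preserved. By the fundamental theorem of Galois descent, the functor $V\mapsto V\otimes_K L$ is an equivalence between $K$-vector spaces and $L$-vector spaces equipped with a semilinear $G$-action, and this equivalence is compatible with all the structure involved (the Hopf algebra structure, the module-algebra axioms, and the map $\theta$). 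So it suffices to classify the Hopf--Galois structures on the $L$-algebra $L\otimes_K L$ that are equivariant for the natural semilinear $G$-action on, say, the left tensor factor.

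Next I would classify Hopf--Galois structures over this split base. By the normal basis theorem there is an $L$-algebra isomorphism $L\otimes_K L\cong\operatorname{Map}(G,L)=\prod_{g\in G}L$, under which the chosen $G$-action becomes $(s\cdot f)(t)=s\bigl(f(s^{-1}t)\bigr)$, i.e.\ $G$ acts by left translation on the index set $G$ and through the Galois action on the values. Since $\operatorname{Map}(G,L)$ is a split \'etale $L$-algebra, its group of $L$-algebra automorphisms is $\Sym_G$, acting by permuting the factors; and a finite cocommutative $L$-Hopf algebra $H'$ for which $\operatorname{Map}(G,L)$ is $H'$-Galois must be a group algebra $L[N]$ with $|N|=|G|$. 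An $H'$-module-algebra structure then amounts to a homomorphism $N\to\Sym_G$, and bijectivity of $\theta$ forces it to be injective with regular (simply transitive) image. Thus the Hopf--Galois structures on $\operatorname{Map}(G,L)/L$ are exactly the regular subgroups $N\le\Sym_G$.

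Finally I would identify the descent datum. Conjugating a permutation automorphism $\sigma\in\Sym_G$ by the action of $s\in G$ gives $\lambda(s)\,\sigma\,\lambda(s)^{-1}$, where $\lambda\colon G\to\Sym_G$ is the left regular representation---the Galois action on values cancels and only the translation part survives. Hence the structure attached to a regular subgroup $N$ carries descent data precisely when $N$ is stable under conjugation by $\lambda(G)$, that is, when $\lambda(G)$ normalises $N$. Stringing the three steps together yields the claimed bijection. The step I expect to be the main obstacle is the structural input in the classification over the split base: that the only Hopf algebras that can occur there are the group algebras $L[N]$, and that bijectivity of $\theta$ is equivalent to regularity of the $N$-action on $G$. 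This is where one genuinely uses cocommutativity of $H$ together with the fact that $\operatorname{Map}(G,L)$ is \'etale, a product of copies of the base field; verifying that Galois descent is compatible with all the Hopf--Galois data, while essential, is routine.
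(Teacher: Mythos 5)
The paper offers no proof of this theorem---it is quoted from Greither and Pareigis \cite{MR878476}---and your descent argument is precisely the proof given there, so the architecture is sound: split over $L$, classify Hopf--Galois structures on the split \'etale algebra $L\otimes_K L\cong\operatorname{Map}(G,L)$, and identify the descent datum with the condition that $\lambda(G)$ normalise the regular subgroup; your conjugation computation showing that the semilinear $G$-action acts on $\Sym_G$ through conjugation by $\lambda(G)$ alone (the Galois action on values cancelling) is correct. A small misattribution first: the isomorphism $L\otimes_K L\cong\operatorname{Map}(G,L)$, $a\otimes b\mapsto (g\mapsto a\,g(b))$, is not the normal basis theorem (which says $L$ is free of rank one over $K[G]$) but the standard splitting of a Galois algebra, obtained from Dedekind's independence of characters plus a dimension count.

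The one genuine gap is the step you yourself flag as the main obstacle, and the route you sketch toward it points in a slightly wrong direction: the claim that a finite cocommutative $L$-Hopf algebra $H'$ making $\operatorname{Map}(G,L)$ Hopf--Galois must be a group algebra does \emph{not} follow from cocommutativity together with \'etaleness of the base algebra. Over a field that is not separably closed, or in characteristic $p$, finite cocommutative Hopf algebras need not be group algebras (consider $L[x]/(x^p)$ with $x$ primitive, or the duals of nonconstant \'etale group schemes). What forces $H'=L[N]$ is the Galois condition itself: writing $S=\operatorname{Map}(G,L)$, bijectivity of $\theta$ gives an isomorphism of $S$-algebras $S\otimes_L S\cong S\otimes_L H'^{*}$; the left side is $\operatorname{Map}(G\times G,L)$, which is split, and since $S$ is a nonzero product of copies of $L$ one concludes that $H'^{*}$ is already a split \'etale $L$-algebra, a product of copies of $L$ indexed by a finite set $N$, whose comultiplication endows $N$ with a group structure, so $H'=L[N]$. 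Only after this does the module-algebra structure become a homomorphism $N\to\Sym_G$ (group-likes act by algebra automorphisms, and the automorphism group of a split \'etale algebra is the symmetric group on its idempotents), with regularity of the image equivalent to bijectivity of $\theta$. With that lemma supplied, the remainder of your argument completes the proof.
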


Recall that a subgroup $A$ of $\Sym_G$ is {\em regular} if, given any $g$, 
$h \in G$, there is a unique $a \in A$ with $a \cdot g=h$. The isomorphism 
class of $A$ in Theorem~\ref{thm:GP} is known as the \emph{type} of the
Hopf-Galois structure. Note that $|A|=|G|$, but in general $A$ and $G$ need 
not be isomorphic.  

In the situation of Theorem \ref{thm:GP}, the fact that $A$ acts regularly on $G$ enables us to define a 
bijection between $A$ and $G$, via which we may translate the left regular action of $G$ on itself into an action of $G$ on $A$. Thus $G$ becomes a regular subgroup of $\Sym_A$. It was observed by Childs \cite{MR990979} that the condition in Theorem \ref{thm:GP}, namely that $A$ is normalized by $G$, holds if and only if $G$ is contained in the subgroup $\Hol(A)$ of $\Sym_A$, where $\Hol(A)= A \rtimes \Aut(A)$ is the {\em holomorph} of $A$. The group operation in $\Hol(A)$ is given by 
\[
	(a,f)(b,g)=(af(b),fg),
\]
and an element $(b,g)\in H$ acts on $a \in A$ by $(b,g)\cdot a=bg(a)$. (Thus the first factor $A$ in $\Hol(A)$ is identified with left multiplications by elements of $A$.)

Childs' observation was used in \cite{MR1402555} to give a formula to count Hopf-Galois structures:
\begin{pro} \label{pro:HGS-count}
The number $e(G,A)$ of Hopf--Galois
structures of type $A$ on a Galois extension $L/K$ with group $G$ is given by 
\[
	e(G,A)=\frac{|\Aut(G)|}{|\Aut(A)|}f(G,A),
\]
where $f(G,A)$ is the number of regular subgroups of $\Hol(A)$ that are
isomorphic to the group $G$. 
\end{pro}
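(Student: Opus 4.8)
The plan is to prove the identity by the double-counting argument of \cite{MR1402555}. By Theorem~\ref{thm:GP} together with Childs' reformulation recalled above, $e(G,A)$ is the number of regular subgroups $N$ of $\Sym_G$ with $N\cong A$ that are normalised by $\rho(G)$, where $\rho\colon G\to\Sym_G$, $\rho_g(x)=gx$, denotes the left regular representation. I would introduce the auxiliary set
\[
\mathcal P=\bigl\{(N,\alpha)\ :\ N\le\Sym_G\text{ regular},\ \rho(G)\le N_{\Sym_G}(N),\ \alpha\colon A\xrightarrow{\ \sim\ }N\text{ a group isomorphism}\bigr\}.
\]
For a fixed valid $N$ the isomorphisms $A\to N$ form a torsor under $\Aut(N)\cong\Aut(A)$, so $|\mathcal P|=e(G,A)\,|\Aut(A)|$. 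The goal is then to construct a surjection $\Phi\colon\mathcal P\to\mathcal R$, where $\mathcal R$ is the set of regular subgroups of $\Hol(A)$ isomorphic to $G$ (so $|\mathcal R|=f(G,A)$), all of whose fibres have exactly $|\Aut(G)|$ elements. Once this is done, $|\mathcal P|=f(G,A)\,|\Aut(G)|$, and equating the two values of $|\mathcal P|$ yields $e(G,A)=\tfrac{|\Aut(G)|}{|\Aut(A)|}f(G,A)$.

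First I would build $\Phi$. Given $(N,\alpha)\in\mathcal P$, regularity of $N$ provides the bijection $b\colon N\to G$, $b(\nu)=\nu(1_G)$; conjugation by $b$ identifies the inclusion $N\hookrightarrow\Sym_G$ with the left regular representation $\rho_N\colon N\hookrightarrow\Sym_N$ and, invoking the standard fact that the normaliser of the left regular representation in the full symmetric group is the holomorph (this is precisely what underlies Childs' criterion), it carries the normalising subgroup $\rho(G)$ to a regular subgroup of $\Hol(N)$ isomorphic to $G$. The isomorphism $\alpha$ then transports $\Hol(N)$ to $\Hol(A)$ — conjugation by $\alpha$ sends $\rho_A$ to $\rho_N$ and the point stabiliser $\Aut(A)$ of $1_A$ to that of $1_N$ — giving a regular subgroup $\Phi(N,\alpha)\le\Hol(A)$ with $\Phi(N,\alpha)\cong G$. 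Chasing the three conjugations one obtains the clean formula $\Phi(N,\alpha)=d^{-1}\rho(G)\,d$ with $d=b\circ\alpha\colon A\to G$.

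For the fibre count I would fix $G'\in\mathcal R$ and construct a map $\Psi_{G'}$ from the set of group isomorphisms $\psi\colon G\to G'$ (there are $|\Aut(G)|$ of them) into $\mathcal P$: using the regular action of $G'$ on $A$ through $\Hol(A)$, put $c(g)=\psi(g)\cdot 1_A$, a bijection $G\to A$, and set $N=c^{-1}\rho_A(A)c$ and $\alpha(a)=c^{-1}\rho_A(a)c$. Since $\rho_A(A)$ is normal in $\Hol(A)$, conjugation by $c$ shows that $\rho(G)=c^{-1}G'c$ normalises $N$, so $(N,\alpha)\in\mathcal P$; unwinding the definitions and using $c(1_G)=1_A$ gives $\Phi(\Psi_{G'}(\psi))=G'$. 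Conversely, from any $(N,\alpha)\in\Phi^{-1}(G')$ the formula for $\Phi$ forces $c=(b\circ\alpha)^{-1}$, and $\psi(g)=c\,\rho_g\,c^{-1}$ is then a group isomorphism $G\to G'$ with $\Psi_{G'}(\psi)=(N,\alpha)$; this last check rests on the two transport identities that conjugation by $b$ carries $\rho_N$ back to the inclusion $N\hookrightarrow\Sym_G$ and that conjugation by $\alpha$ carries $\rho_A$ to $\rho_N$. Hence $\Psi_{G'}$ is a bijection onto $\Phi^{-1}(G')$, so $|\Phi^{-1}(G')|=|\Aut(G)|$ and the count is complete.

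The main obstacle, in my view, is not conceptual — the essential inputs (holomorph $=$ normaliser of the left regular representation, and Childs' criterion) are already in hand — but bookkeeping: one works simultaneously in $\Sym_G$, $\Sym_N$ and $\Sym_A$, and must keep careful track of which bijection is used to conjugate in which direction, prove the two $\rho$-transport identities cleanly, and verify that the factor $|\Aut(A)|$ coming from the choices of $\alpha$ and the factor $|\Aut(G)|$ coming from the fibres of $\Phi$ are each counted exactly once, so that no spurious multiplicity enters the final ratio.
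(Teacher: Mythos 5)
Your argument is correct and is essentially the paper's own double-counting proof: your set $\mathcal P$ is the paper's set $\mathcal A_0$ of regular embeddings $\alpha\colon A\to\Sym_G$ with image normalised by $G$, your map $d=b\circ\alpha$ is the paper's bijection $\alpha_*$, and your surjection $\Phi$ with fibres of size $|\Aut(G)|$ is the composite of the paper's bijection $\mathcal A_0\to\mathcal G_0$, $\alpha\mapsto\beta=\alpha_*^{-1}\lambda_G(\cdot)\alpha_*$, with the passage from $\beta$ to its image subgroup, whose fibres are exactly the $\Aut(G)$-orbits the paper counts. No substantive difference.
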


We now sketch the proof of this, partly following the exposition in \cite[\S7]{MR1767499}, in order to elucidate the relationship between Hopf-Galois structures and skew braces.  

To begin with, we consider $G$ and $A$ as abstract groups, i.e.~given without any 
actions on each other. Let $\lambda_G : G \to \Sym_G$ be the left regular 
representation: $\lambda_G(g) \cdot h =gh$ for $g$, $h \in G$. We will 
call $\alpha : A \to \Sym_G$ a {\em regular embedding} if $\alpha$ is 
an injective group homomorphism whose image $\alpha(A) \subseteq \Sym_G$ 
is regular on $G$. A regular embedding $\alpha : A \to \Sym_G$ induces a bijection
$$ \alpha_* : A \to G, \qquad \alpha_*(a)=\alpha(a) \cdot 1_G. $$
Define $\beta : G \to \Sym_A$ by $\beta(g) = \alpha_*^{-1} \lambda_G(g)\alpha_*$. 
Then $\beta$ is also a regular embedding. In this way, we obtain a bijection 
from the set
$$ \mathcal{A} = \{\mbox{regular embeddings } \alpha : A \to \Sym_G\} $$
to the set
$$ \mathcal{G} = \{\mbox{regular embeddings } \beta : G \to \Sym_A\}, $$
whose inverse is obtained by the same construction with $A$ and $G$ interchanged. 
By the observation of Childs, this restricts to a bijection from 
$$ \mathcal{A}_0 = \{\alpha \in \mathcal{A} \, : \,  \alpha(A) \mbox{ is normalized by } G  \} $$
to
$$ \mathcal{G}_0 = \{ \beta \in \mathcal{G} : \beta(G) \subseteq \Hol(A)\}. $$

If $\alpha \in \mathcal{A}_0$ and $\phi \in \Aut(A)$, then also 
$\alpha \phi \in \mathcal{A}_0$. Thus $\Aut(A)$ acts on $A$ 
(from the right) by composition. This action is fixed-point-free: if 
$\alpha\phi=\alpha$ then $\phi=\id_A$. Moreover, for $\alpha$, 
$\alpha' \in \mathcal{A}_0$, we have 
$\alpha'(A)=\alpha(A) \Leftrightarrow \alpha '=\alpha \phi$ for 
some $\phi \in \Aut(A)$. Thus each regular subgroup 
$\alpha(A) \subseteq \Sym_G$ normalized by $G$ corresponds to an orbit 
of $\mathcal{A}_0$ under $\Aut(A)$, and each such orbit has 
cardinality $|\Aut(A)|$. By Theorem \ref{thm:GP}, the number of these 
subgroups is $e(G,A)$. Hence we have 
$|\mathcal{A}_0| = |\Aut(A)| e(G,A)$. A similar argument gives 
$|\mathcal{G}_0| = |\Aut(G)| f(G,A)$. As there is a bijective 
correspondence between $\mathcal{A}_0$ and $\mathcal{G}_0$, Proposition \ref{pro:HGS-count} follows. 

The action of $\Aut(A)$ on $\mathcal{A}_0$ by composition translates to an action 
on $\mathcal{G}_0$. Explicitly, if $\alpha \in \mathcal{A}_0$ 
corresponds to $\beta \in \mathcal{G}_0$, and $\phi \in \Aut(A)$, then 
$\alpha'=\alpha \phi$ corresponds to $\beta'$ where 
$\beta'(g)=\phi^{-1} \beta(g) \phi \in \Sym_A$. Thus the action of $\Aut(A)$ on 
$\mathcal{G}_0$ is by conjugation inside $\Sym_A$, and this 
action is again fixed-point-free. Two elements of $\mathcal{G}_0$ give 
rise to the same regular subgroup of $\Hol(A)$ if and only if they are in the same orbit
under this action. Thus the Hopf-Galois structures of type $A$ on $L/K$ correspond bijectively to the $\Aut(A)$-conjugacy classes of $\mathcal{G}_0$. 

One may check that the action of $\Aut(A)$ on $\mathcal{G}_0$ by conjugation commutes with the action 
of $\Aut(G)$ by composition. 

We now turn to the classification of skew braces. We have the following result from \cite[Proposition 4.3]{MR3647970}. 

\begin{pro}
	\label{pro:regular}
	Let $A$ be a group.  There exists a bijective correspondence between
	isomorphism classes of skew braces with additive group isomorphic to $A$ and classes
	of regular subgroups of $\Hol(A)$ under conjugation by elements of
	$\Aut(A)$. 
\end{pro}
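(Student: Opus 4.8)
The plan is to exploit the machinery already set up in the appendix, together with the equivalence between skew braces with additive group $A$ and bijective $1$-cocycles with values in $A$ from Theorem~\ref{thm:1cocycle}. First I would recall from Lemma~\ref{lem:lambda} that a skew brace structure on the group $\Add{A}=A$ amounts to a group homomorphism $\lambda\colon\Mul{A}\to\Aut(A)$ together with the second group operation $\circ$, and that the map $\Mul{A}\to\Hol(A)$, $x\mapsto(x,\lambda_x)$, composed with the action of $\Hol(A)$ on $A$, realizes $\Mul{A}$ as a subgroup of $\Sym_A$ whose action on $1\in A$ is $x\mapsto x\circ 1\cdot\text{(something)}$; more precisely $(x,\lambda_x)\cdot 1 = x\lambda_x(1)=x$, so the orbit of $1$ is all of $A$, i.e. the image is a \emph{regular} subgroup of $\Hol(A)$. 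Conversely, given a regular subgroup $N\subseteq\Hol(A)$, regularity gives a bijection $N\to A$, $n\mapsto n\cdot 1$, and transporting the group structure of $N$ along this bijection produces a second group operation $\circ$ on $A$; one checks that writing $n=(a_n,f_n)$ with $a_n = n\cdot 1$ forces $f_n$ to depend only on $a_n$, yielding a map $\lambda\colon A\to\Aut(A)$ satisfying the cocycle-type identity~\eqref{eq:lambda} of Lemma~\ref{lem:Bachiller}, hence a skew brace.

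Next I would address the equivalence-relation bookkeeping, which is the real content. On the skew-brace side, two skew brace structures on $A$ are isomorphic precisely when there is $\phi\in\Aut(A)$ intertwining both operations $\cdot$ (automatic, since $\phi\in\Aut\Add{A}$) and $\circ$; translating through the construction above, $\phi$ conjugates one regular subgroup of $\Hol(A)$ to the other via the action of $\Aut(A)\subseteq\Hol(A)$ by conjugation (here one uses that conjugation by $(0,\phi)\in\Hol(A)$ sends $(a,f)$ to $(\phi(a),\phi f\phi^{-1})$, which is exactly the effect of relabelling $A$ by $\phi$). So isomorphism classes of skew braces with additive group $A$ map to $\Aut(A)$-conjugacy classes of regular subgroups of $\Hol(A)$. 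The two constructions are mutually inverse on the nose (different choices of the identifying bijection $N\to A$ differ by an element of $\Aut(A)$, which is absorbed into the conjugacy class), giving the claimed bijection. I would phrase this as: the assignment skew brace $\mapsto$ image of $x\mapsto(x,\lambda_x)$ is well-defined on isomorphism classes, the assignment regular subgroup $\mapsto$ transported operation is well-defined on $\Aut(A)$-conjugacy classes, and they are inverse to each other.

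The step I expect to be the main obstacle is verifying that for a regular subgroup $N=\{(a,f_a):a\in A\}\subseteq\Hol(A)$ the automorphism component $f$ is genuinely a function of the first component $a$, and that the resulting $\lambda\colon a\mapsto f_a$ satisfies $\lambda_{a\lambda_a(b)}=\lambda_a\lambda_b$ — i.e. that Lemma~\ref{lem:Bachiller} applies. This is a direct computation inside $\Hol(A)$: closure of $N$ under multiplication forces $(a,f_a)(b,f_b)=(a f_a(b), f_a f_b)\in N$, and regularity says this element is the unique member of $N$ whose first coordinate is $a f_a(b)$, hence $f_{a f_a(b)}=f_a f_b$, which is precisely~\eqref{eq:lambda} with $\circ$ defined by $a\circ b = a\lambda_a(b)$. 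Once this identity is in hand, Lemma~\ref{lem:Bachiller} immediately gives the skew brace, and the rest is the routine but slightly tedious verification that the two constructions and the two equivalence relations match up, which I would carry out but not belabor. The cleanest write-up simply cites \cite[Proposition 4.3]{MR3647970} for this proposition, as the excerpt in fact does; if a self-contained argument is wanted, the outline above is what I would expand.
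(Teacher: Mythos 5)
Your proposal is correct and takes essentially the same route as the paper: the paper's proof sends a regular subgroup $G\subseteq\Hol(A)$ to the skew brace with $a\circ b=af(b)$ via the bijection $(a,f)\mapsto a$, sends a skew brace to the regular subgroup $\{(a,\lambda_a):a\in A\}$, and leaves the mutual inverseness and the matching of isomorphism with $\Aut(A)$-conjugacy as routine checks, which you spell out in more detail. The only cosmetic difference is that you conclude the brace axioms from Lemma~\ref{lem:Bachiller} via the identity $f_{af_a(b)}=f_af_b$, whereas the paper verifies the compatibility condition~\eqref{eq:compatibility} directly; both are one-line computations.
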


\begin{proof}
  Let $\mathcal{B}(A)$ be the set of isomorphism classes of skew braces with
  additive group $A$ and let $\mathcal{R}(A)$ be the set of equivalence classes
  of regular subgroups of $\Hol(A)$ under conjugation by $\Aut(A)$. 
  
  Let $G$ be a
  regular subgroup of $\Hol(A)$. The regularity of $G$ implies that $\pi\colon
  G\to A$, $\pi(a,f)=a$, is bijective. Then $A$
  with the operation 
  \[
    a\circ b=\pi(\pi^{-1}(a)\pi^{-1}(b))=af(b)
  \]
  is a group isomorphic to $G$. Since
  \[
	a\circ (bc)=af(bc)=af(b)f(c)=af(b)a^{-1}af(c)=(a\circ b)a^{-1}(a\circ c),
  \]
  the set $A$ is a skew brace. A routine calculation shows that this
  correspondence induces a map $C\colon\mathcal{R}(A)\to\mathcal{B}(A)$.  

  Let $B\colon \mathcal{B}(A)\to\mathcal{R}(A)$ be given by
  $B(A)=\{(a,\lambda_a):a\in A\}$.  Routine calculations show that the map $B$ is
  well-defined and that  
  $B\circ C=\id_{\mathcal{R}(A)}$ and $C\circ
  B=\id_{\mathcal{B}(A)}$.
\end{proof}

\begin{rem}
  Proposition~\ref{pro:regular} was proved for classical braces by
  Bachiller~\cite[Proposition 2.3]{MR3465351}. 
\end{rem}

In terms of the preceding notation, the regular subgroups of $\Hol(A)$ which 
are isomorphic to $G$ correspond to orbits of $\mathcal{G}_0$ under $\Aut(A)$, 
and the isomorphism classes of skew braces with multiplicative group $G$ 
and additive group $A$ correspond to orbits of $\mathcal{G}_0$ under $\Aut(G) \times \Aut(A)$.  
We summarize the above discussion in the following result.

\begin{thm} 
Let $A$ and $G$ be finite groups of the same order, and let $\mathcal{G}_0$ 
be the set of regular embeddings $G \to \Hol(A)$. Then $\mathcal{G}_0$ 
admits commuting actions (from the right) of $\Aut(G)$ by composition 
and of $\Aut(A)$ by conjugation in $\Sym(A)$.

The set of Hopf-Galois structures of type $A$ on a Galois extension of fields 
with group $G$ corresponds bijectively to the set of orbits 
$\mathcal{G}_0 / \Aut(G)$, while the set of isomorphism classes of 
skew braces with multiplicative group $G$ and additive group $A$ corresponds 
bijectively to the set of orbits $\mathcal{G}_0 / (\Aut(G) \times \Aut(A))$. 

Hence there is a surjective map from this set of Hopf-Galois structures 
to this set of isomorphism classes of skew braces, induced by the canonical surjection
$$  \mathcal{G}_0 / \Aut(G) \twoheadrightarrow \mathcal{G}_0 / (\Aut(G) \times \Aut(A)).$$ 
\end{thm}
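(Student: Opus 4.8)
The theorem summarizes constructions already carried out in the preceding pages, so the plan is to assemble the available pieces rather than to prove anything new: the Greither--Pareigis classification (Theorem~\ref{thm:GP}), Childs' reformulation of ``normalized by $G$'' as ``contained in $\Hol(A)$'' together with the explicit bijection $\mathcal{A}_0\leftrightarrow\mathcal{G}_0$ and its equivariance computed above, and the skew-brace/regular-subgroup dictionary of Proposition~\ref{pro:regular}.

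First I would dispose of the assertion about the two commuting actions, which is pure bookkeeping. For $\psi\in\Aut(G)$ the map $\beta\mapsto\beta\circ\psi$ preserves $\mathcal{G}_0$ (the image $\beta(G)$ is unchanged, so its regularity on $A$ and its containment in $\Hol(A)$ persist) and satisfies $(\beta\circ\psi_1)\circ\psi_2=\beta\circ(\psi_1\psi_2)$, hence is a right action. For $\phi\in\Aut(A)$, viewed inside $\Hol(A)\subseteq\Sym(A)$, the map $\beta\mapsto(g\mapsto\phi^{-1}\beta(g)\phi)$ also preserves $\mathcal{G}_0$: conjugation by a permutation carries regular subgroups to regular subgroups, and since $\phi$ lies in the \emph{subgroup} $\Hol(A)$ the conjugate $\phi^{-1}\beta(G)\phi$ again lies in $\Hol(A)$; moreover $(\beta\ast\phi_1)\ast\phi_2=\phi_2^{-1}\phi_1^{-1}\beta(-)\phi_1\phi_2=\beta\ast(\phi_1\phi_2)$, again a right action. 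The two commute because precomposition with $\psi\colon G\to G$ and conjugation by $\phi$ inside $\Sym(A)$ act on independent data: $((\beta\circ\psi)\ast\phi)(g)=\phi^{-1}\beta(\psi(g))\phi=((\beta\ast\phi)\circ\psi)(g)$.

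Next I would record the two orbit-space descriptions. By Theorem~\ref{thm:GP} the Hopf--Galois structures of type $A$ on $L/K$ biject with the regular subgroups of $\Sym_G$ isomorphic to $A$ and normalized by $G$; each such subgroup is the common image of an $\Aut(A)$-torsor of regular embeddings $A\to\Sym_G$, so this set of subgroups is $\mathcal{A}_0/\Aut(A)$ for the composition action. Childs' observation identifies $\mathcal{A}_0$ with $\mathcal{G}_0$ and (by the computation already made above) intertwines the composition action on one side with the conjugation action on the other, so transporting this orbit space across the identification gives the first bijection of the statement, with the automorphism group exactly as indicated there. For the second, Proposition~\ref{pro:regular} identifies isomorphism classes of skew braces with additive group $A$ with $\Aut(A)$-conjugacy classes of regular subgroups of $\Hol(A)$, and the multiplicative group of the brace attached to a subgroup $N$ is isomorphic to $N$; restricting to the $N$ isomorphic to $G$ and rewriting each as an $\Aut(G)$-orbit of regular embeddings $G\to\Hol(A)$ turns this into $\mathcal{G}_0/(\Aut(G)\times\Aut(A))$, which is legitimate precisely because the two actions commute. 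The asserted surjection is then formal: it is the canonical projection from the orbit space by the smaller group onto the orbit space by the larger one, and it is surjective because every coarse orbit contains a fine one.

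The one genuinely delicate point---the step I expect to need the most care---is the equivariance bookkeeping: one must keep straight which of $\Aut(A)$ and $\Aut(G)$ acts by composition and which by conjugation on each of $\mathcal{A}_0$ and $\mathcal{G}_0$, so that the orbit spaces arising from the Greither--Pareigis side and from the skew-brace side assemble into a single tower of quotients of $\mathcal{G}_0$. All of these computations appear in the discussion preceding the theorem, so the proof proper consists simply of citing them.
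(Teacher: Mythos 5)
Your proposal follows the paper's own route: the theorem is presented there explicitly as a summary of the preceding discussion, and your assembly of Theorem~\ref{thm:GP}, Childs' observation, the bijection $\mathcal{A}_0\leftrightarrow\mathcal{G}_0$ together with its equivariance, and Proposition~\ref{pro:regular} is exactly that argument. Your verification of the two commuting free actions, your identification of the regular subgroups of $\Hol(A)$ isomorphic to $G$ with $\mathcal{G}_0/\Aut(G)$, and your identification of skew-brace isomorphism classes with $\mathcal{G}_0/(\Aut(G)\times\Aut(A))$ are all correct and agree with the paper.

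The one step you must not gloss over is the first bijection. What your (correct) argument produces is: Hopf--Galois structures of type $A$ correspond to $\mathcal{A}_0/\Aut(A)$, hence, transporting the composition action to conjugation, to $\mathcal{G}_0/\Aut(A)$ --- precisely the paper's sentence that these structures ``correspond bijectively to the $\Aut(A)$-conjugacy classes of $\mathcal{G}_0$.'' The theorem as printed, however, says $\mathcal{G}_0/\Aut(G)$, and you assert that your automorphism group is ``exactly as indicated there,'' which it is not. The two quotients genuinely differ: both actions are fixed-point-free, so $|\mathcal{G}_0/\Aut(G)|=f(G,A)$, whereas the number of Hopf--Galois structures of type $A$ is $e(G,A)=\frac{|\Aut(G)|}{|\Aut(A)|}f(G,A)$ by Proposition~\ref{pro:HGS-count}; for instance $e(\Sym_3,C_6)=3$ while $f(\Sym_3,C_6)=1$. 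So the printed first bijection (and the displayed surjection, whose source should then be $\mathcal{G}_0/\Aut(A)$) has $\Aut(G)$ and $\Aut(A)$ interchanged relative to what the preceding discussion --- and your own derivation --- actually proves. Your write-up should state explicitly that you are proving the version with $\mathcal{G}_0/\Aut(A)$ and flag the discrepancy, rather than claim literal agreement; as it stands, that final identification is the one place where your argument, read against the stated theorem, does not go through.
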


\begin{rem}
While each of the groups $\Aut(A)$ and $\Aut(G)$ acts without fixed points 
on $\mathcal{G}_0$, this is not true for $\Aut(G) \times \Aut(A)$, and 
the orbits under this group need not all have the same size. 
\end{rem}

In more concrete terms, in order to count either Hopf-Galois structures or skew braces, 
we need to determine the regular subgroups of $\Hol(A)$ isomorphic to $G$. To obtain the 
number of Hopf-Galois structures, we take the number of such subgroups and adjust 
by the factor $|\Aut(G)| / |\Aut(A)|$ as specified in Proposition \ref{pro:HGS-count}. To obtain the number of skew braces (up to isomorphism), we take the number of {\em orbits} of such subgroups under conjugacy by $\Aut(A)$. In general, these orbits are of different sizes, so there is no simple relationship between the number of Hopf-Galois structures and the number of skew braces. 

%A subgroup $H$ of $\Hol(A)$ acts on $A$ by $(b,g)\cdot a=bg(a)$, $(b,g)\in H$,
%$a\in A$. A subgroup $H$ is said to be \emph{regular} if its action on $A$ is
%regular, i.e. for each $a,c\in A$ there is a unique $(b,g)\in H$ such that
%$bg(a)=c$. 

%Regular subgroups of the holomorph turn out to be useful for studying
%Hopf--Galois extensions.  
%This observation is known as the Byott's translation,
%see~\cite[\S7]{MR1767499}.
%In~\cite{MR990979} Childs noticed that regular subgroups $A$ of $\Sym_G$
%normalized by $G$ are in bijective correspondence with regular embeddings of
%$\Hol(A)$ with image isomorphic to $G$.

%Proposition~\ref{pro:regular} yields the following proposition:

%\begin{pro}
%  \label{pro:surjective}
%	Let $A$ be a group. There exists a surjective map from the set of
%	Hopf--Galois extensions on $L/K$ of type $A$ and the set of isomorphism
%	classes of skew braces structures with additive group isomorphic to
%	$A$.
%\end{pro}

We illustrate the difference between counting Hopf-Galois structures and counting skew braces by means of an example.

\begin{example}
Let $G=C_{p^n}$ be the cyclic group of order $p^n$ for an odd prime $p$ and $n \in \N$. 
In this case, the Hopf-Galois structures were determined by Kohl \cite{MR1644203} (see
also~\cite[Theorem 9.1]{MR1767499}), and the classical braces were determined by 
Rump \cite{MR2298848}. If $A$ is a group of order $p^n$ (not necessarily abelian) 
such that $\Hol(A)$ contains an element of order $p^n$ then in fact $A$ is cyclic 
\cite[Theorem 4.4]{MR1644203}. Thus every Hopf-Galois structure on a cyclic field 
extension of degree $p^n$ is of cyclic type, and every skew brace with 
multiplicative group $C_{p^n}$ also has additive group $C_{p^n}$. In particular, 
there are no such skew braces beyond the classical braces found by Rump. Let $\sigma$ 
be a generator of $A=C_{p^n}$. Then $\Aut(A)=\{ \theta_u : u \in (\Z/p^n \Z)^\times \}$, 
where $\theta_u(\sigma)=\sigma^u$. Now any regular subgroup $G$ of $\Hol(A)$ must 
contain a unique element of the form $(\sigma,\theta_u)$, and it easy to check 
that this element generates $G$. Moreover, $u \equiv 1 \pmod{p}$ since $\theta_u$ 
must have $p$-power order. Hence there are $p^{n-1}$ possibilities for $u$. This 
gives $p^{n-1}$ distinct regular subgroups, and hence $p^{n-1}$ Hopf-Galois 
structures. To count the skew braces, we must consider the orbits of these subgroups 
under conjugacy by $\Aut(A)$. Now if $G=\langle (\sigma, \theta_u) \rangle$ then 
$\theta_v G \theta_v^{-1}$ is generated by $(\sigma^v,\theta_u)$. This subgroup 
is also generated by a unique element of the form $(\sigma, \theta_w)$. As $v$ varies, 
the possible values of $w$ are precisely those such that $u-1$ and $w-1$ are 
divisible by the same power of $p$. Hence we obtain $n$ skew braces (up to isomorphism), 
corresponding to $u=1$, $1+p^{n-1}$, $1+p^{n-2}$, $\ldots$, $1+p$. These skew braces 
have socles of size $p^n$, $p^{n-1}$, $\ldots$, $p$ respectively, and the 
corresponding orbits of regular subgroups under the action of $\Aut(A)$  
have sizes $1$, $p-1$, $p(p-1)$, $\ldots$, $p^{n-2}(p-1)$ respectively.
\end{example}

Having explained the connection between skew braces and Hopf-Galois structures, 
we restate a couple of known results for Hopf-Galois structures in terms of skew braces. 
The first example is the uniqueness result \cite[Theorem 1]{MR3647970}: 

\begin{thm}
  Let $n\in\N$.  There is a unique skew brace of size $n$ if and only if $n$
  and $\phi(n)$ are coprime, where $\phi$ denotes the Euler's totient function.
\end{thm}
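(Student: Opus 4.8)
The plan is to establish both directions using the correspondence of Proposition~\ref{pro:regular} between isomorphism classes of skew braces with additive group $A$ and $\Aut(A)$-conjugacy classes of regular subgroups of $\Hol(A)$. The key observation is that if $n$ and $\phi(n)$ are coprime, then every group of order $n$ is cyclic (this is a classical fact: a group of order $n$ with $\gcd(n,\phi(n))=1$ is cyclic, since such $n$ are exactly those for which the only group of that order is $C_n$), and moreover $\Aut(C_n)$ has order $\phi(n)$, which is coprime to $n$.

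For the ``if'' direction, suppose $\gcd(n,\phi(n))=1$. Then the additive group $A$ of any skew brace of size $n$ must be $C_n$, and by Proposition~\ref{pro:regular} it suffices to show that $\Hol(C_n)$ has exactly one regular subgroup up to conjugacy by $\Aut(C_n)$, namely $C_n\times 1$ itself. A regular subgroup $G\le\Hol(C_n)=C_n\rtimes\Aut(C_n)$ has order $n$; composing with the projection $\Hol(C_n)\to\Aut(C_n)$ gives a homomorphism $G\to\Aut(C_n)$ whose image has order dividing both $n$ and $|\Aut(C_n)|=\phi(n)$, hence is trivial. Therefore $G\subseteq C_n\times 1$, and by counting, $G=C_n\times 1$. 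So there is a single regular subgroup, a single conjugacy class, and a single skew brace, namely the trivial one.

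For the ``only if'' direction, suppose $\gcd(n,\phi(n))=d>1$; I want to exhibit at least two non-isomorphic skew braces of size $n$. Pick a prime $p\mid d$, so $p\mid n$ and $p\mid\phi(n)$; write $n=p^a m$ with $p\nmid m$. Since $p\mid\phi(n)=\prod \phi(p_i^{a_i})$, either $p^2\mid n$ (so $p\mid\phi(p^a)$) or there is another prime $q\mid n$ with $p\mid q-1$. In the first case $C_{p^2}$ (hence $C_{p^a}$) has an automorphism of order $p$, yielding a nontrivial semidirect product $C_p\rtimes C_{p^a}$ or similar nonabelian group of order a power of $p$ dividing $n$; in the second case $C_q\rtimes C_p$ is nonabelian of order $qp\mid n$. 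Either way one produces a nonabelian group $H$ of order dividing $n$, and then a nonabelian group of order exactly $n$ (by taking $H$ times a cyclic group of the complementary order). The trivial skew brace on this nonabelian group (Example~\ref{exa:trivial}, with $a\circ b=ab$) has nonabelian multiplicative group. But the trivial skew brace on $C_n$ has abelian multiplicative group $C_n$. These two skew braces are non-isomorphic since a brace isomorphism preserves the isomorphism type of the multiplicative group; hence there is more than one skew brace of size $n$.

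The main obstacle is the ``only if'' direction: carefully handling the case analysis on why $\gcd(n,\phi(n))>1$ forces the existence of a nonabelian group of order $n$, and confirming that one indeed gets a nonabelian group of the \emph{exact} order $n$ (not merely of some divisor of $n$) — this uses that a direct product of a nonabelian group with a nontrivial group is nonabelian. Once a nonabelian group of order $n$ is in hand, producing two non-isomorphic braces is immediate from Example~\ref{exa:trivial}. Alternatively, and perhaps more cleanly, one can invoke the cited Hopf--Galois literature (\cite{MR3647970}, and the counting results recalled above) directly, since the statement is precisely \cite[Theorem 1]{MR3647970} translated into brace language; but the self-contained argument via Proposition~\ref{pro:regular} is more satisfying and is what I would present.
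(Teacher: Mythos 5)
The paper itself gives no proof here: the theorem is stated as a translation into brace language of the known uniqueness result \cite[Theorem 1]{MR3647970}, so your self-contained argument via Proposition~\ref{pro:regular} goes beyond what the paper does. Your ``if'' direction is correct: when $\gcd(n,\phi(n))=1$ every group of order $n$ is cyclic, and the projection argument showing that the only regular subgroup of $\Hol(C_n)$ is $C_n\times 1$ (image in $\Aut(C_n)$ has order dividing both $n$ and $\phi(n)$, hence is trivial) is exactly right.

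The ``only if'' direction, however, has a genuine gap. You claim that $\gcd(n,\phi(n))>1$ always yields a \emph{nonabelian} group of order $n$, but this is false: take $n=4$, $9$, $25$, or $45=3^2\cdot 5$. In each case $\gcd(n,\phi(n))>1$ (e.g.\ $\gcd(45,24)=3$), yet every group of that order is abelian. The failure occurs precisely in your first case, $p^2\mid n$: the nonabelian group you build, $C_{p^a}\rtimes C_p$, has order $p^{a+1}$, which does \emph{not} divide $n=p^a m$, and a nonabelian group of order $p^a$ itself exists only for $a\geq 3$. The repair is easy and makes the whole argument cleaner: you do not need a nonabelian group, only two non-isomorphic groups of order $n$, because the trivial skew braces of Example~\ref{exa:trivial} on non-isomorphic groups are non-isomorphic (a brace isomorphism is in particular an isomorphism of additive groups). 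When $p^2\mid n$, take the two abelian groups $C_{p^a}\times C_{n/p^a}$ and $C_p\times C_{p^{a-1}}\times C_{n/p^a}$, which are distinguished by their Sylow $p$-subgroups; when $n$ is squarefree and $p\mid q-1$ for distinct primes $p,q$ dividing $n$, your construction $(C_q\rtimes C_p)\times C_{n/pq}$ works as written. With that correction the proof is complete.
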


The following result is~\cite[Theorem 2]{MR3425626}:

\begin{thm}
  Let $A$ be a finite skew brace with abelian multiplicative group. Then the
  additive group of $A$ is solvable.
\end{thm}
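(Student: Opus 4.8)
The plan is to embed the additive group into a group that is a product of two abelian subgroups, and then invoke It\^o's theorem. First I would pass to the crossed group $\Gamma(A)=\Add{A}\rtimes\Mul{A}$ of Definition~\ref{def:structure}. By Theorem~\ref{thm:brace2T}, this group has the factorization $\Gamma(A)=BM$, where $B=\{(x,x):x\in A\}$ and $M=1\times\Mul{A}$.

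The crucial point is that both $B$ and $M$ are abelian. Indeed $M\simeq\Mul{A}$, which is abelian by hypothesis; and $B\simeq\Mul{A}$ as well, since from the multiplication rule $(a,x)(b,y)=(a\lambda_x(b),x\circ y)$ of $\Gamma(A)$ one gets $(x,x)(y,y)=(x\lambda_x(y),x\circ y)=(x\circ y,x\circ y)$, so that $x\mapsto(x,x)$ is a group isomorphism $\Mul{A}\to B$. Hence $\Gamma(A)$ is a product of two abelian subgroups.

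Now I would apply It\^o's theorem: a group equal to the product of two abelian subgroups has abelian commutator subgroup, in particular it is solvable. Thus $\Gamma(A)$ is solvable. Since $\Add{A}\times 1$ is a subgroup of $\Gamma(A)$ isomorphic to $\Add{A}$ (again by Theorem~\ref{thm:brace2T}), the additive group $\Add{A}$ is solvable, as claimed.

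I do not expect any real obstacle: everything beyond Theorem~\ref{thm:brace2T} reduces to the classical theorem of It\^o, and the only computation to double-check is that $x\mapsto(x,x)$ is an isomorphism onto $B$, sketched above. It is worth remarking that this argument in fact shows that $\Add{A}$ is metabelian, and that finiteness of $A$ is not used anywhere.
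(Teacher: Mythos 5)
Your proof is correct. Note first that the paper does not actually prove this statement: it simply imports it as \cite[Theorem 2]{MR3425626}, i.e.\ as Byott's solubility criterion for Hopf--Galois structures of abelian Galois group. What you have done is give a self-contained proof internal to the skew-brace framework: the factorization $\Gamma(A)=BM$ from Theorem~\ref{thm:brace2T}, together with your (correct) computation $(x,x)(y,y)=(x\lambda_x(y),x\circ y)=(x\circ y,x\circ y)$ showing $B\simeq\Mul{A}\simeq M$, exhibits the crossed group as a product of two abelian subgroups, and It\^o's theorem then makes $\Gamma(A)$ metabelian, hence so is its subgroup $\Add{A}\times 1\simeq\Add{A}$. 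This is in substance the same mechanism as Byott's original argument, where the group $\lambda(A)\,G$ inside $\Hol(A)$ is written as the product of the regular abelian subgroup $G$ with the image of $\lambda$ (an abelian quotient of $G$) and It\^o is applied there; your version replaces $\Hol(A)$ by the crossed group of Definition~\ref{def:structure} and the triple factorization of Theorem~\ref{thm:brace2T}, which is cleaner in this context and avoids the Hopf--Galois translation entirely. Your two closing observations are also right and worth recording: the argument shows the strictly stronger statement that $\Add{A}$ is metabelian, and since It\^o's theorem holds for arbitrary groups, the finiteness hypothesis in the statement is not needed.
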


%\begin{proof}
%  It is an inmediate consequence of Byott's uniqueness theorem~\cite[Theorem
%  1]{MR3647970} and Proposition~\ref{pro:surjective}.
%\end{proof}

%
%\begin{thm}[Kohl]
%  \label{thm:Kohl}
%  Let $p$ be an odd prime and $n\in\N$. Let $A$ be a finite skew brace of size $p^n$
%  with cyclic multiplicative group. Then the additive group of $A$ is cyclic. 
%\end{thm}
%
%There are several skew braces of size $2^n$ or divisible by at least two primes
%with cyclic multiplicative group and non-cyclic additive group.  So it seems
%that the assumption of the order of the brace cannot be relaxed.

\begin{question}
  Let $A$ be a skew brace with multiplicative group isomorphic to $\Z$. Is the
  additive group of $A$ also isomorphic to $\Z$?
\end{question}

In~\cite[Algorithm 5.1]{MR3647970} a method to enumerate skew braces of small
size appears.  It is based on Proposition~\ref{pro:regular}. An easy
modification of the Magma~\cite{MR1484478} implementation of \cite[Algorithm
5.1]{MR3647970} allows us to enumerate Hopf--Galois extensions of small degree using 
Proposition~\ref{pro:HGS-count}. 

%According to Byott~\cite{MR1402555}, the number $e(G,A)$ of Hopf--Galois
%structures on $L/K$ of type $A$ is then 
%\[
%	e(G,A)=\frac{|\Aut(G)|}{|\Aut(A)|}f(G,A),
%\]
%where $f(G,A)$ is the number of regular subgroups of $\Hol(A)$ that are
%isomorphic to the group $G$. 

\begin{exa}
  In~\cite[Corollaries 6.3 and 6.4]{MR2030805} one finds that
  \[
	e(\Sym_3,\Sym_3)=e(C_6,\Sym_3)=2,\quad
	e(\Sym_3,C_6)=3,\quad
	e(C_6,C_6)=1.
  \]
  In~\cite[Corollary 6.6]{MR2030805} one finds that
  \begin{align*}
    e(C_7\rtimes C_3,C_7\rtimes C_3)&=16, & e(C_7\rtimes C_3,C_{21})&=7,\\
  e(C_{21},C_7\rtimes C_3)&=4, & e(C_{21},C_{21})&=1.
  \end{align*}
\end{exa}

Let $n\in\N$. Let $G_1,\dots,G_m$ be a complete set of representatives of
isomorphism classes of groups of order $n$. To record the number of
Hopf--Galois extensions of degree $n$, we constuct an $m\times m$
array $E(n)$ in which the $(i,j)$-entry is the number $e(G_i,G_j)$. 

\begin{exa}
  The arrays $E(8)$ and $E(12)$ are shown in Tables~\ref{tab:size8}
  and~\ref{tab:size12}, respectively.
\end{exa}

\begin{table}[h] 
  \caption{The number of Hopf--Galois extensions of fields of degree eight.}
  \begin{tabular}{|c|ccccc|}
    \hline 
    \rule{0pt}{2.5ex}
    & $C_8$ & $C_4\times C_2$ & $C_4\rtimes C_2$ & $Q_8$ & $C_2^3$\tabularnewline
    \hline
    $C_8$ &2 &  0&   2&   2 & 0\tabularnewline
    $C_4\times C_2$ &4 & 10&   6&   2 & 4\tabularnewline 
    $C_4\rtimes C_2$ &2 & 14&   6&   2 & 6\tabularnewline
    $Q_8$ &6 &  6&   6&   2 & 2\tabularnewline
    $C_2^3$ &0 & 42&  42&  14 & 8\tabularnewline
    \hline
  \end{tabular}
  \label{tab:size8}
\end{table}

\begin{table}[h] 
  \caption{The number of Hopf--Galois extensions of fields of degree twelve.}
  \begin{tabular}{|c|ccccc|}
    \hline
    & $C_3\rtimes C_4$ & $C_{12}$ & $\Alt_4$ & $C_6\rtimes C_2$ & $C_6\times C_2$\tabularnewline
    \hline
    $C_3\rtimes C_4$ & 2 & 3 & 12 & 2 & 3\tabularnewline 
    $C_{12}$ & 2 & 1 &  0 & 2 & 1\tabularnewline
    $\Alt_4$ & 0 & 0 & 10 & 0 & 4\tabularnewline
    $C_6\rtimes C_2$ & 14 & 9 &  0 &14 & 3\tabularnewline
    $C_6\times C_2$ & 6 & 3 &  4 & 6 & 1\tabularnewline
    \hline
  \end{tabular}
  \label{tab:size12}
\end{table}

The number $h(n)$ of Hopf--Galois structures of degree $n$ is
\[
  h(n)=\sum_{i=1}^m\sum_{j=1}^me(G_i,G_j). 
\]
Some values of $h(n)$ are shown in Table~\ref{tab:h(n)}. 

\begin{table}[h] 
    \caption{The number $h(n)$ of Hopf--Galois extensions of fields of degree $n$.}
    \begin{tabular}{|c|cccccccc|}
        \hline
        $n$    & 6 & 8 & 10 & 12 & 14  & 16 & 18 & 20 \tabularnewline
        $h(n)$ & 8 & 190 & 10 & 102 & 12  & 25168 & 289 & 166  \tabularnewline
        \hline
        $n$    & 21 & 22 & 24 & 25 & 26 & 27 & 28 & 30  \tabularnewline
        $h(n)$ & 28 & 16 & 5618 & 30 & 18 & 4329 & 128 & 80 \tabularnewline
        \hline
	$n$ & 34 & 36 & 38 & 40 & 42 & 44 & 45 & 46\tabularnewline
	$h(n)$ & 22 & 5980 & 24 & 8556 & 374 & 184 & 12 & 28 \tabularnewline
	\hline
    \end{tabular}
    \label{tab:h(n)}
\end{table}

\begin{problem}
  Compute $h(32)$. 
\end{problem}

\section*{Acknowledgements}

This research was supported with ERC advanced grant 320974.  The second-named
author is partially supported by PICT-2014-1376, MATH-AmSud 17MATH-01, ICTP and
the Alexander Von Humboldt Foundation. The authors thank Nigel Byott, Mart\'in Gonz\'alez Yamone, Timothy
Kohl, Victoria Lebed, Michael West and the referees for comments and corrections. 

\bibliographystyle{abbrv}
\bibliography{refs}

%\includepdf{appendix.pdf}

\end{document}